\newcommand{\R}{\mathbb{R}}
\newcommand{\Rd}{ \mathbb{R}^{d}}
\newcommand{\Rdwithouto}{\mathbb{R}^{d}\setminus\{0\}}
\newcommand{\N}{\mathbb{N}}
\newcommand{\indyk}[1]{\mathds{1}_{#1}}
\newcommand{\nuzero}{\accentset{\circ}{\nu}}
\newcommand{\Pzero}{\accentset{\circ}{P}}
\newcommand{\pzero}{\accentset{\circ}{p}}
\newcommand{\sfera}{ \mathds{S}^{d-1}}
\newcommand{\Borel}{ {\mathcal{B}}(\Rd) }
\newcommand{\Fourier}{ {\mathcal{F}}}
\newcommand{\scalp}[2]{#1\cdot#2}
\newtheorem{lemma}{Lemma}
\newtheorem{prop}{Proposition}
\newtheorem{theorem}{Theorem}
\newtheorem{corollary}{Corollary}
\newtheorem{remark}{Remark}
\newtheorem{example}{Example}
\newcounter{conum} \setcounter{conum}{-1}
\renewcommand{\Re}{\ensuremath{\operatorname{Re}}}
\begin{document}

\title{Spatial asymptotics at infinity for heat kernels of integro-differential operators}
\author{Kamil Kaleta and Pawe{\l} Sztonyk}

\address{Kamil Kaleta \\ Faculty of Pure and Applied Mathematics,
  Wroc{\l}aw University of Science and Technology,
  Wybrze{\.z}e Wyspia{\'n}\-skie\-go 27,
  50-370 Wroc{\l}aw, Poland \\  
	and Institut f\"ur mathematische Stochastik,
  Fachrichtung Mathematik,
  Technische Universit\"at Dresden,
  01062 Dresden, Germany}
\email{kamil.kaleta@pwr.edu.pl}

\address{Pawe{\l} Sztonyk \\ Faculty of Pure and Applied Mathematics,
  Wroc{\l}aw University of Science and Technology,
  Wybrze{\.z}e Wyspia{\'n}\-skie\-go 27,
  50-370 Wroc{\l}aw, Poland}
\email{pawel.sztonyk@pwr.edu.pl}

\maketitle

\begin{abstract}
We study a spatial asymptotic behaviour at infinity of kernels $p_t(x)$ for convolution semigroups of nonlocal pseudo-differential operators.
We give general and sharp sufficient conditions under which the limits
$$
  \lim_{r \to \infty} \frac{p_t(r\theta-y)}{t \, \nu(r\theta)}, \quad t \in T, \ \ \theta \in E, \ \ y \in \R^d,
$$
exist and can be effectively computed. Here $\nu$ is the corresponding L\'evy density, $T \subset (0,\infty)$ is a bounded time-set and $E$ is a subset of the unit sphere in $\R^d$, $d \geq 1$. Our results are local on the unit sphere. They apply to a wide class of convolution semigroups, including those corresponding to highly asymmetric (finite and infinite) L\'evy measures. Key examples include fairly general families of stable, tempered stable, jump-diffusion and compound Poisson semigroups. A main emphasis is put on the semigroups with L\'evy measures that are exponentially localized at infinity, for which our assumptions and results are strongly related to the existence of the multidimensional exponential moments. Here a key example is the evolution semigroup corresponding to the so-called \emph{quasi-relativistic Hamiltonian} $\sqrt{-\Delta+m^2} - m$, $m>0$. As a byproduct, we also obtain sharp two-sided estimates of the kernels $p_t$ in generalized cones, away from the origin.

\bigskip
\noindent
\emph{Key-words}: convolution semigroup, L\'evy measure, L\'evy process, tempered process, relativistic Hamiltonian, convolution of measures, transition density, heat kernel, asymptotics, subexponential decay, exponential decay, exponential moment, light tail

\bigskip
\noindent
2010 {\it MS Classification}: Primary 47D03, 60J35, 35A08; Secondary 60G51, 60E07, 35S10.\\
  
\end{abstract}

\footnotetext{
Research was supported in part by the National Science Centre, Poland, grants no. 2015/17/B/ST1/01233 and 2016/23/G/ST1/04211 and by the Alexander von Humboldt Foundation, Germany.
}
\section{Introduction and statement of results}

In recent years, nonlocal integro-differential operators and the corresponding evolution equations have 
received much attention in both pure and applied mathematics. Nonlocal operators and related 
stochastic processes, often called diffusions with jumps, provide new methods in scientific modelling, 
in particular they allow us to model discontinuous phenomena, providing realistic correctives and refinements 
to established theories.

Let $d \geq 1$, $b\in\Rd$, $A = (a_{ij})_{1 \leq i,j \leq d}$ be a symmetric nonnegative definite matrix, and let $\nu$ be a measure on $\R^d \setminus \left\{ 0 \right\}$ such that $\int_{\Rdwithouto} \left(1\wedge |y|^2\right)\,\nu(dy) < \infty$, called L\'evy measure. In this paper, under fairly general conditions on $\nu$ and $A$, we study a spatial asymptotic behaviour at infinity of the fundamental solution $p_t(x) := p(t,x)$ (the heat kernel) to the following nonlocal evolution equation
$$
\partial_t u(t,x) - L_x u(t,x) = 0, \quad x \in \R^d, \ \ t >0,
$$  
where $L$ is a homogeneous pseudo-differential operator which is uniquely determined by its Fourier transform
$$
\Fourier(L \, h)(\xi) = -\psi(\xi) \Fourier(h)(\xi), \quad \xi \in \R^d, \ \ h \in D(L) := \left\{g \in L^2(\R^d): \psi \Fourier(g) \in L^2(\R^d) \right\},
$$
where
\begin{equation} \label{eq:Phi_2}
  \psi(\xi) =  - i\scalp{\xi}{b} + \scalp{\xi}{A\xi}  + \int \left(1 - e^{i\scalp{\xi}{y}} + i\scalp{\xi}{y}\indyk{B(0,1)}(y)\right)\nu(dy) , \quad \xi\in\Rd.
\end{equation}
It is known that $C_c^{\infty}(\R^d) \subset D(L)$ is a core of $L$ on which it has the following integro-differential representation 
$$
L \, h(x) = \scalp{b}{\nabla h (x)} + \sum_{i,j=1}^d a_{ij} \partial_{x_i}\partial_{x_j} h(x)
  + \int \left(h(x+z)-h(x)-\indyk{B(0,1)}(z)\scalp{z}{\nabla h(x)}\right) \,\nu(dz).
$$
The operator $L$ is a generator of the L\'evy process with jumps which is fully described by a convolution semigroup of probability measures $\{ P_t,\, t\geq 0 \}$ on $\R^d$ such that $\Fourier(P_t)(\xi)=\int_{\Rd} e^{i\scalp{\xi}{y}}P_t(dy)=\exp(-t\psi(\xi))$, $\xi \in \R^d$, $t>0$. More precisely, its transition probabilities have the form $P_t(B-x)$, $x \in \R^d$, $B \in \Borel$. The functions $p_t(\cdot)$, whenever they exist, are densities of measures $P_t$. For regular introduction to the theory of pseudo-differential operators, their evolution semigroups and related L\'evy and L\'evy-type processes we refer to \cite{BSW, J1}. The existence and the regularity of densities for convolution semigroups are discussed in \cite{KSch} (see also \cite{JKLSch2012})

The explicit expression of $p_t(x)$ is typically impossible to get. Therefore, it is a basic problem, both in probability theory and in analysis, to obtain the estimates as well as some information on the asymptotic behaviour of $p_t(x)$ in space and time. In case of uniformly elliptic and bounded divergence form operators, which generate the diffusion processes in $\R^d$, it is well known that the heat kernels enjoy the celebrated Aronson's Gaussian type behaviour \cite{A}. 

Investigations on asymptotic behaviour of isotropic $\alpha$-stable ($\alpha \in (0,2)$) convolution semigroups date back to 1923 and 1960, when P\'olya \cite{P} and Blumenthal and Getoor \cite{BG60} obtained the first results in this direction. With respect to a further study of asymptotic behaviour of convolution semigroups in space and time we refer to \cite{B, Ishi94, Lean, Y, RW, FH, KnopKul, KT} and references there. In recent papers \cite{CGT, KM, GRT} the case of unimodal and isotropic jump L\'evy processes has been analyzed.  

The paper which is the most related to our present work is the well known contribution of J. Dziuba\'nski \cite{D91}, where similar asymptotic problem for strictly stable semigroups on Lie groups, including Euclidean spaces, was studied. The argument in this paper is based on perturbation techniques and scaling properties, and essentially differs from our approach (see further discussion in Section 6.1). Our methods allows us to deal with a fairly general class of homogeneous integro-differential operators $L$ in Euclidean spaces, under reasonable conditions on $A$ and $\nu$. We do not require any scaling conditions and include the operators with highly anisotropic integral parts, with finite and infinite L\'evy measures. Our argument is mainly based on a precise analysis of the radial asymptotics at inifnity for densities of convolutions of restricted multidimensional L\'evy measures $\nu_r(\,\cdot \,) := \nu(\, \cdot \, \cap B(0,r)^c)$ for large $r>0$, and for the corresponding convolution exponents, which form a certain family of compound Poisson semigroups of measures. This can be effectively done under very powerful assumption involving the particular parameter function $K$ (see \eqref{ass:sjp}). It provides us with a sufficient control of single convolutions $\nu_r * \nu_r(x)$ in large $x$ and $r$ and gives a necessary compactification of convergence. These ideas are completely new in the context of asymptotic behaviour of convolution semigroups.
A remarkable feature of our study is that we cover not only long-tailed L\'evy measures (like that of a jump-stable type L\'evy process), but also those with second moment finite, including exponentially localized L\'evy measures, which turn out to be the most difficult case. Neither results nor methods of this type were previously known in this case. For instance, we derive the spatial asymptotics at infinity for heat kernels of relativistic stable operators (Section 6.2), tempered stable (Section 6.4) and compound Poisson semigroups, which are related to the so-called convolution operators. These classes of operators and corresponding L\'evy processes are known to have interesting and important applications in (mathematical) physics and technical sciences \cite{CMS, LS, MS, N, KL16, Kop, N, UM}, financial methematics \cite{CGMY, CGMY2, BRKF, Ko, Sc} or even atmospheric sciences \cite{Ka}.

We now turn to the presentation of our results. Denote 
$$
\Phi(\xi) = \int \left(1 - e^{i\scalp{\xi}{y}} + i\scalp{\xi}{y}\indyk{B(0,1)}(y)\right)\nu(dy), \quad \xi \in R^d,
$$
and 
$$
\Psi(r)= (\Re \Phi)^{*}(r) := \sup_{|\xi|\leq r} \Re \Phi(\xi), \quad r>0.
$$
Clearly, $\Phi(\xi)$ is a part of the Fourier symbol $\psi(\xi)$ which corresponds to the integral part of the operator $L$ and $\Psi(r)$ is a maximal function of its symmetrization. We note that $\Psi$ is continuous, non-decreasing and $\Psi(0)=0$. Denote $\Psi(\infty):= \lim_{r \to \infty} \Psi(r) = \sup_{r>0} \Psi(r)$. One can check that $\Psi(\infty) = \infty$ if $\nu(\Rdwithouto)=\infty$.
Also, let 
$$
\Psi_{-}(s)=\sup\{r>0: \Psi(r)=s\}, \quad  s\in \big(0,\Psi(\infty)\big),
$$ 
be the generalized right inverse function to $\Psi$. We have $\Psi(\Psi_{-}(s))=s$ for $s\in \big(0,\Psi(\infty)\big)$ and $\Psi_{-}(\Psi(s))\geq s$ for $s>0$. For $E \subset \sfera$ we denote $\Gamma_{E}:= \left\{y:y/|y|\in E\right\}$.

\medskip

The following will be the standing assumptions on the Gaussian matrix $A$ and the L\'evy measure $\nu$ throughout the paper. 

\bigskip

\begin{itemize}
\item[\textbf{(A)}]  
$A \equiv 0$ \ or \ $\inf_{|\xi|=1} \xi \cdot A \xi > 0 $
\end{itemize} 

\bigskip

\begin{itemize}
\item[\textbf{(B)}]
$\nu(dx) = \nu(x) dx$ and there exists a nonincreasing function $f:(0,\infty) \to (0,\infty)$ and a constant $C_0 >0$ such that
$$
\nu(x) \leq C_0 f(|x|), \quad x \in \Rdwithouto, 
$$
\begin{align} \label{ass:low_reg}
\quad \liminf_{r \to 0^{+}} \frac{\nu\big( \left\{x: |x| > r\right\}\big)}{f(r)r^d} >0
\end{align}
and 
\begin{align} \label{ass:sjp}
K(r):= \sup_{|x| > 1} \frac{\int_{|x-y|>r \atop |y| >r} f(|x-y|) f(|y|) dy}{f(|x|)} \ \searrow \ 0 \qquad \text{as \ \ $r \to \infty$.}
\end{align}
\end{itemize}

\bigskip

\begin{itemize}
\item[\textbf{(C)}]  
Let $E \subset \sfera$ and $\kappa \geq 0$ be such that
\begin{align} \label{eq:add_conv}
\lim_{r \to \infty} \frac{\nu(r \theta-y)}{\nu(r\theta)} = e^{\kappa (\scalp{\theta}{y})}, \quad y \in \R^d, \ \ \theta \in E,
\end{align}
and 
\begin{align} \label{eq:lower}
C_1 := \inf_{x \in \Gamma_E} \frac{\nu(x)}{f(|x|)}   > 0.
\end{align}
\end{itemize} 

\bigskip

\begin{itemize}
\item[\textbf{(D)}]  
There is a nonempty and bounded set $T \subset (0,\infty)$ and a constant $C_2 >0$ such that
$$
\int_{\R^d} e^{-t \Re \Phi(\xi)} |\xi| d\xi \leq C_2 \left(\Psi_{-}\left(\frac{1}{t}\right)\right)^{d+1}, \quad t \in T.
$$
\end{itemize} 

\bigskip

\noindent
The first two assumptions \textbf{(A)} and \textbf{(B)} give a general framework for our study and determine the class of semigroups we work with. 
The condition \textbf{(C)} determines the type of convergence which is initially required from the densities of the underlying L\'evy measures.
Both conditions \textbf{(B)} and \textbf{(C)} are fundamental for the results obtained in the present paper. The last condition \textbf{(D)} is rather a technical assumption, which provides the existence and required regularity of the densities $p_t(x)$ for the measures $P_t(dx)$ over the given time-set $T$. Further discussion of the assumptions \textbf{(A)}-\textbf{(D)} is
given in Remark \ref{rem:assumptions}.

By \cite[Th. 25.17]{Sato}, for given $\xi \in \R^d$, the condition
\begin{align} \label{eq:v-moment}
\int_{|y| > 1} e^{\scalp{\xi}{y}} \nu(dy) < \infty
\end{align}
is equivalent to the existence of multidimensional exponential moment of order $\xi$ of the semigroup $\{ P_t,\, t\geq 0 \}$, i.e., 
$$
\int_{\R^d}  e^{\scalp{\xi}{y}} P_t(dy) < \infty, \quad t> 0. 
$$
Moreover, the function 
\begin{align} \label{eq:hyp_exp}
\widetilde \psi(\xi) = - \scalp{\xi}{b} - \scalp{\xi}{A \xi} + \int_{\R^d \setminus \left\{0\right\}} 
		\left(1-e^{\scalp{\xi}{y}}+\scalp{\xi}{y}\indyk{B(0,1)}(y) \right) \nu(y)\, dy
\end{align}
is definable and finite for every $\xi \in \R^d$ satisfying \eqref{eq:v-moment} and the equality
\begin{align} \label{eq:hyp_exp_1}
\int_{\R^d}  e^{\scalp{\xi}{y}} P_t(dy) = e^{-t \widetilde \psi(\xi)}, \quad t >0,
\end{align}
holds. 
We prove below in Lemma \ref{lm:useful_2} that our assumptions \textbf{(B)} and \textbf{(C)} yield \eqref{eq:v-moment} for every $\xi=k\theta$, with $\theta\in E$. In particular, the map $\theta \mapsto \widetilde \psi(\kappa \theta)$ is well defined and uniformly bounded on $E$ and \eqref{eq:hyp_exp_1} holds for all $\xi=\kappa\theta$, $\theta \in E$. 

\bigskip

The following theorem is a main result of this paper.

\begin{theorem} \label{thm:main}
Let the assumptions \textbf{(A)}-\textbf{(D)} hold. Specifically, let \textbf{(C)} and \textbf{(D)} be satisfied with some $E \subset \sfera$, $\kappa \geq 0$ and $T \subset (0, \infty)$. 
Then the following hold.
\begin{itemize}
\item[(a)] For every $t \in T$, $\theta \in E$ and $y \in \R^d$, 
\begin{align} \label{eq:res_conv}
  \lim_{r \to \infty} \frac{p_t(r\theta -y)}{t \, \nu(r\theta)} = \left\{
  \begin{array}{ccc}
   1 & \mbox{  if  } & \kappa = 0,\\
   e^{-t \widetilde \psi(\kappa \theta) + \kappa (\scalp{\theta}{y})} & \mbox{  if  } & \kappa > 0.
  \end{array}\right.
\end{align}
\item[(b)] If for every compact set $D \subset \R^d$ the convergence in \textbf{(C)} is uniform in $(\theta,y)$ on the rectangle $E \times D$, then \eqref{eq:res_conv} is uniform in $(t,\theta,y)$ on each cuboid $T \times E \times B(0,\varrho) $, $\varrho >0$.
\end{itemize}
\end{theorem}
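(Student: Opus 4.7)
The plan is to decompose the process into a bounded-jump part (together with the Gaussian and drift components) and a compound Poisson part of ``large'' jumps, and then analyze the large-jump factor through its explicit Poisson series. I would fix an auxiliary threshold $R>0$ and split $\nu = (\nu\,\indyk{B(0,R)}) + \nu_R$, with $\nu_R(dz) = \nu(z)\indyk{B(0,R)^c}(z) dz$. Then $P_t = Q_t^{(R)} * R_t^{(R)}$, where $R_t^{(R)}$ is the compound Poisson semigroup with L\'evy measure $\nu_R$ and expands as $R_t^{(R)}(dz) = e^{-t\nu_R(\R^d)} \sum_{n \geq 0} (t^n/n!)\, \nu_R^{*n}(dz)$; this gives the representation
\begin{equation*}
p_t(x) = e^{-t\nu_R(\R^d)} \Big( q_t^{(R)}(x) + t\, (q_t^{(R)} * \nu_R)(x) + \sum_{n\geq 2} \frac{t^n}{n!} (q_t^{(R)} * \nu_R^{*n})(x) \Big).
\end{equation*}
Since $Q_t^{(R)}$ has bounded jumps, its real Fourier exponent differs from $\Re\Phi$ by a bounded amount, so assumption \textbf{(D)} produces $q_t^{(R)}$ by Fourier inversion; all exponential moments of $Q_t^{(R)}$ exist, and $q_t^{(R)}(r\theta - y)/\nu(r\theta)\to 0$ as $r\to\infty$.

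I would analyze the $n=1$ term by writing $(q_t^{(R)}*\nu_R)(r\theta - y) = \int q_t^{(R)}(z)\,\nu_R(r\theta - y - z)\,dz$; for $r$ large the cutoff $\indyk{|r\theta - y - z| > R}$ equals $1$ on the bulk of the integration, and assumption \textbf{(C)} gives pointwise $\nu(r\theta - y - z)/\nu(r\theta)\to e^{\kappa \scalp{\theta}{y+z}}$. A dominated-convergence argument using the monotone envelope $f$ from \textbf{(B)}, together with the finiteness of $N_R := \int q_t^{(R)}(z)e^{\kappa \scalp{\theta}{z}}dz$ (guaranteed by the bounded-jump structure of $Q_t^{(R)}$), yields the limit $e^{\kappa \scalp{\theta}{y}} N_R$. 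For $n \geq 2$ the essential input is \eqref{ass:sjp}: iterating the $K$-bound from $\nu_R \leq C_0 f \indyk{B(0,R)^c}$ one gets inductively $\nu_R^{*n}(x) \leq c^n K(R)^{n-1} f(|x|)$ for $|x| > 1$, with $c$ independent of $R$ and $n$. After convolving with $q_t^{(R)}$ and using \eqref{eq:lower} to compare $f(|r\theta - y|)$ with $\nu(r\theta) \geq C_1 f(r)$, the contribution $\sum_{n \geq 2} (t^n/n!)(q_t^{(R)} * \nu_R^{*n})(r\theta - y)/(t\nu(r\theta))$ is bounded uniformly in $r$ by $C(t,y,\theta)K(R)$.

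Combining, for each fixed $R$ large,
\begin{equation*}
\limsup_{r \to \infty} \Big| \frac{p_t(r\theta - y)}{t\nu(r\theta)} - e^{-t\nu_R(\R^d)} e^{\kappa \scalp{\theta}{y}} N_R \Big| \leq C(t,y,\theta) K(R).
\end{equation*}
Factoring $\int e^{\kappa\scalp{\theta}{z}}P_t(dz) = e^{-t\widetilde{\psi}(\kappa\theta)}$ across the product decomposition gives $e^{-t\widetilde{\psi}(\kappa\theta)} = N_R \cdot \exp\big(t(M_R - \nu_R(\R^d))\big)$ with $M_R := \int_{|z|>R} e^{\kappa\scalp{\theta}{z}}\nu(z)\,dz$ (finite by Lemma \ref{lm:useful_2}), which rewrites the middle term as $e^{-t\widetilde{\psi}(\kappa\theta) - tM_R} e^{\kappa \scalp{\theta}{y}}$. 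Since $\nu_R(\R^d), M_R, K(R) \to 0$ as $R\to\infty$, sending $R \to \infty$ forces $\limsup = \liminf$ and identifies the limit as $e^{-t\widetilde{\psi}(\kappa\theta) + \kappa \scalp{\theta}{y}}$, reducing to $1$ when $\kappa = 0$ since $\widetilde{\psi}(0) = 0$. Part (b) follows because each step — invocation of \textbf{(C)}, the dominated convergence in $z$, the uniform tail estimate — is uniform on $E \times D$ under the stronger hypothesis. The hard part will be establishing the inductive bound $\nu_R^{*n}(x) \leq c^n K(R)^{n-1}f(|x|)$ with $c$ geometric (not factorial) in $n$ and uniform in $R$: handling the short-range region $|x - z| \leq 1$, where the $K$-definition does not apply directly, together with a mild doubling-type control on $f$, is what allows the series tail to be summed geometrically and produces the decaying parameter $K(R)$.
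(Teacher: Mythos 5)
Your proposal takes a genuinely different route from the paper. You decompose at a \emph{single} fixed cutoff $R$ (eventually $R\to\infty$) and analyze the Poisson series of the $\nu_R$-part term by term, needing precise asymptotics only for $n=1$ and showing that the whole $n\geq 2$ tail is $O(K(R))$, uniformly in the spatial variable. The paper instead splits the L\'evy measure at the time-dependent scale $r=h(t)=1/\Psi_{-}(1/t)$, keeps $\bar p_t$ as the full compound-Poisson density at that scale, and proves exact asymptotics for \emph{every} $\nu_r^{n*}$ (Lemma~\ref{lem:conv}) and for $\bar p_t$ (Lemma~\ref{lem:conv_Poiss}); a separate secondary cutoff $R$ appears only in the domain decomposition of the final convolution, and $R\to\infty$ is taken after $s\to\infty$. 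What your approach buys is a conceptually simpler scheme that dispenses with Lemmas~\ref{lem:conv} and \ref{lem:conv_Poiss}; what the paper's two-scale scheme buys is that the $n$-fold convolution bound only needs to hold with a fixed radius $r\leq r_0$, so it can be proved with $\Psi(1/r)^{n-1}$ (Lemma~\ref{lm:useful}(e)) rather than an $R$-uniform $K(R)$-power, sidestepping the hardest estimate in your plan.

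There are two real gaps. First, the inductive bound $\nu_R^{*n}(x)\leq c^nK(R)^{n-1}f(|x|)$ with $c$ uniform in $R$ is very likely too strong, and you acknowledge this. The difficulty is that $\nu_R^{*(n-1)}$ is not supported away from the origin, so when iterating $\nu_R^{*n}=\nu_R^{*(n-1)}*\nu_R$ one must treat the region $|x-z|\leq 1$ separately; there the induction hypothesis does not apply. A workable route: bound $\sup_{|w|\leq 1}\nu_R^{*k}(w)$ directly using the support of $\nu_R$ (so $|w-z|\geq R-1$) together with $|\nu_R|\lesssim K(R)$ (from \cite[Lem.~2.1]{KL17}, as quoted after \eqref{ass:sjp}) and the doubling estimate Lemma~\ref{lm:useful}(b); this produces a bound of the form $\nu_R^{*n}(x)\leq c^n K(R)\,(K(1)+1)^{n-2}f(|x|)$ for $|x|\geq 3$, which is a single power of $K(R)$ but geometric in $n$ and \emph{does} suffice to sum the tail. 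So the statement you plan to prove should be weakened, and its proof spelled out.

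Second, the ``dominated-convergence argument'' for the $n=1$ term is not a pure domination. For $z$ of order $r$ the ratio $\nu_R(r\theta-y-z)/\nu(r\theta)$ is \emph{not} bounded uniformly in $r$ (as $r\theta-y-z$ approaches the sphere $|\cdot|=R$ the numerator is of fixed size while $\nu(r\theta)\to 0$), so one cannot find a single integrable majorant. What actually works — and this mirrors what the paper does in estimating $I_2,I_3,I_5$ — is to split the integration into $|z|\leq R'$ (where \textbf{(C)} and bounded convergence apply) and $|z|>R'$ (where the super-exponential Fourier-inversion decay of $q_t^{(R)}$, the analog of \eqref{eq:small_jumps_est} and Lemma~\ref{lem:lambda_est}, beats any polynomial or exponential $f$), and then send $R'\to\infty$. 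You should state and prove the needed $q_t^{(R)}$ estimate; because your $R\to\infty$, you must check that this estimate, though its constants degrade with $R$, is still adequate once $r\to\infty$ has been taken first. With these two points filled in, your plan would give a valid and somewhat shorter proof.
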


\bigskip

We now discuss in a more detail our assumptions and results.

\noindent
\begin{remark} \label{rem:assumptions}
\rm{

\noindent
\begin{itemize}
\item[(a)] The assumption \textbf{(A)} is self-explanatory (cf. \cite{A}). Observe that the differential part of the operator $L$ induced by the matrix $A$ does not play any important role in the main-order term in the asymptotics of the heat kernel $p_t(x)$. It only contributes to the exponent $\widetilde \psi(\kappa \theta)$ if $\kappa >0$, i.e. if the decay of $\nu$ at infinity is exponential.

\item[(b)] We like to note that the density $\nu$ is not required to have any radiality, symmetry and monotonicity properties. We only assume in \textbf{(B)} that it is dominated by a nonincreasing profile $f$ having certain regularity properties \eqref{ass:low_reg}-\eqref{ass:sjp} and agreeing with $\nu$ on a given generalized cone only (as in \eqref{eq:lower}). The property \eqref{ass:low_reg} means that on average the profile $f$ is sharp enough to reflect the behaviour of $\nu$ at zero. The function $K$ appearing in \eqref{ass:sjp} has been recently introduced in \cite[Sec. 2.2]{KL17} as a parameter describing the long jumps properties of L\'evy processes driven by measures $P_t$. It has an interesting stochastic interpretation: if $\nu \asymp f$, then $K(r)$ represents the rate of preference of single jumps of size
at least $r$ over arbitrary combinations of double jumps of size at least $r$. It is clearly a nonincreasing function such that $K(r) \geq C \nu(B(0,r)^c)$, $r \geq 1$ \cite[Lem. 2.1]{KL17}. In the cited paper, $K$ was a main tool in a study of the localization properties of eigenfunctions of nonlocal Schr\"odinger operators corresponding to negative eigenvalues. Note that if $\nu$ is a radial nonincreasing function (in this case all $p_t$ inherit these properties and the corresponding semigroup is called isotropic unimodal), our result in Theorem \ref{thm:main} is sharp as well.

\item[(c)] The condition \textbf{(C)} is local in the sense that it may be satisfied only for subsets $E$ of the unit sphere (including singletons), leading to asymptotic results for $p_t$ on these sets (cf. Example \ref{ex:ex1}). In particular, from the assertion (b) of Theorem \ref{thm:main} we can easily derive that if \textbf{(C)} holds with $E=\{\theta\}$ for given $\theta \in\sfera$ and the convergence in \eqref{eq:add_conv} is uniform in $y$ on every compact set $D\subset\Rd$, we get uniform convergence in \eqref{eq:res_conv} in $(t,y)$ on $T\times B(0,\varrho)$ for every $\varrho>0$. Our condition \eqref{eq:add_conv} can be seen as a one of possible multidimensional generalizations of the analogous asymptotic property known from the theory of one-dimensional sub-exponential and convolution-equivalent distributions in probability. With respect to a study in this area and some applications we refer to \cite{Klup, Pa1, Pa2, W05, W10, Knop, R, T} and references there, just to mention a few contributions.

\item[(d)] It is instructive to see how essential for our results are the conditions \eqref{ass:sjp} and \eqref{eq:add_conv}. Some possible converse implications between the convergence \eqref{eq:res_conv} in Theorem \ref{thm:main} and these conditions are discussed in Proposition \ref{prop:converse} in Section 4. It is also worth to point out that the convergence in \eqref{eq:add_conv} is not enough for the existence of exponential moments of $p_t$, and, in consequence, for the convergence \eqref{eq:res_conv} in Theorem \ref{thm:main}. Here the control of the second convolution as in \eqref{ass:sjp} is crucial as well (cf. Example \ref{ex:ex2}). Moreover, it can be conjectured that the condition $K(1) = \sup_{r \geq 1} K(r) < \infty$ is actually not very far from the assumption that $K(r) \to 0$ as $r \to \infty$ in \eqref{ass:sjp}. For some other applications of the condition $K(1)<\infty$ and further discussion of it we refer the reader to our recent papers \cite{KL, KSz, KSz2, KL16, KKL17}.

\item[(e)] As noticed in Section 6.3, the inequality in \textbf{(D)} depends only on the behaviur of $\nu$ around zero, which translates to the behaviour of $\Re\Phi$ at infinity. Observe that if $\int e^{-t_0 \Re \Phi(\xi)} |\xi| d\xi < \infty$ for some $t_0>0$, then, thanks to the monotonicity, \textbf{(D)} holds true for every $T = [t_0, t_1]$ with $t_1 > t_0$. It is easy to check that this integrability follows e.g. from the Hartman-Wintner type condition $\liminf_{|\xi| \to \infty} \frac{\Re \Phi(\xi)}{\log|\xi|} > 0$. On the other hand, one can verify that if $\nu \asymp f$ and there exists $\alpha >0$, $r_0 >0$ and $C \in (0,1)$ such that $\Re \Phi(\lambda \xi) \geq C \lambda^{\alpha} \Re \Phi(\xi)$, for every $|\xi| > r_0$ and $\lambda \geq 1$, then there exists $t_0>0$ such that the assumption \textbf{(D)} holds with $T = (0,t_0)$ (see e.g. \cite[Lem. 5]{KSz2} and \cite{KSz1}). Some examples are discussed in Section \ref{sec:Examples}.  
\end{itemize}
}
\end{remark}
 
The following two-sided sharp estimate of $p_t$ in generalized cones $\Gamma_{E}$, away from the origin, is a direct corollary from Theorem \ref{thm:main}. It can be seen as a spherically local version of our estimates in \cite[Thm. 3 and Thm. 4]{KSz2}.

\begin{corollary} \label{cor:main}
Let the assumptions \textbf{(A)}-\textbf{(D)} hold. Specifically, let \textbf{(C)} and \textbf{(D)} be satisfied with some $E \subset \sfera$, $\kappa \geq 0$ and $T \subset (0, \infty)$. If for every compact set $D \subset \R^d$ the convergence in \textbf{(C)} is uniform in $(\theta,y)$ on the rectangle $E \times D$, then for every $\varrho >0$ and $\varepsilon \in (0,1)$ there exists $R >0$ such that 
$$
  \left(e^{-t \widetilde \psi(\kappa \theta) + \kappa (\scalp{\theta}{y})}-\varepsilon\right) \, t \, \nu(x) \leq p_t(x -y) \leq \left(e^{-t \widetilde \psi(\kappa \theta) + \kappa (\scalp{\theta}{y})}+\varepsilon\right) \, t \, \nu(x),
$$
for every $t \in T$, $y \in B(0,\varrho)$ and $x \in \Gamma_E \cap B(0,R)^c$.
\end{corollary}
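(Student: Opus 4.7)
The plan is to deduce the corollary directly from part (b) of Theorem \ref{thm:main}. First, I would parametrize an arbitrary $x \in \Gamma_E \cap B(0,R)^c$ as $x = r\theta$ with $r := |x| > R$ and $\theta := x/|x| \in E$. Under this identification, $\nu(x) = \nu(r\theta)$ and $p_t(x-y) = p_t(r\theta - y)$, so the two-sided estimate to be proved is precisely the $\varepsilon$-band statement obtained by taking absolute values in \eqref{eq:res_conv}.

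Second, I would observe that the limit in \eqref{eq:res_conv} can be written uniformly, for both $\kappa = 0$ and $\kappa > 0$, as $L(t,\theta,y) := e^{-t\widetilde\psi(\kappa\theta) + \kappa(\scalp{\theta}{y})}$, since $\widetilde\psi(0) = 0$ from \eqref{eq:hyp_exp}. Because the hypothesis on uniform convergence in \textbf{(C)} is in force, Theorem \ref{thm:main}(b) applies and yields uniform convergence of $p_t(r\theta - y)/(t\nu(r\theta))$ to $L(t,\theta,y)$ on the cuboid $T \times E \times B(0,\varrho)$.

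Third, given $\varepsilon \in (0,1)$, I would unwind the definition of uniform convergence to produce $R = R(\varepsilon,\varrho) > 0$ such that, for every $r > R$ and every $(t,\theta,y) \in T \times E \times B(0,\varrho)$,
$$
\left|\frac{p_t(r\theta - y)}{t\,\nu(r\theta)} - L(t,\theta,y)\right| < \varepsilon.
$$
Multiplying through by the positive quantity $t\,\nu(r\theta)$ and reverting to the $x$-variable then yields the displayed two-sided bound.

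There is no genuine obstacle here; the corollary is a direct repackaging of Theorem \ref{thm:main}(b). The only minor point to verify is that $L$ is bounded on the cuboid, so that the passage from the quotient form to the $\varepsilon$-band form is legitimate uniformly in all parameters — and this is guaranteed by the uniform boundedness of $\theta \mapsto \widetilde\psi(\kappa\theta)$ on $E$ noted immediately after \eqref{eq:hyp_exp_1}.
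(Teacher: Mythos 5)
Your proposal is correct and matches the paper's intent: the paper states that the corollary is a ``direct corollary from Theorem~\ref{thm:main}'' without giving a separate proof, and your argument — parametrize $x = r\theta$ with $r>R$, $\theta\in E$, invoke the uniform convergence of Theorem~\ref{thm:main}(b) on $T\times E\times B(0,\varrho)$, then multiply the resulting $\varepsilon$-band by the positive quantity $t\,\nu(r\theta)$ — is exactly that deduction. One small remark: the boundedness of $L(t,\theta,y)=e^{-t\widetilde\psi(\kappa\theta)+\kappa(\theta\cdot y)}$ on the cuboid is not actually needed to pass from the quotient form to the two-sided bound (multiplication by a positive factor always preserves the inequality); the paper mentions this boundedness only in the discussion after the corollary, to point out that the resulting lower bound is nondegenerate for small $\varepsilon$.
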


The above bounds are of special interest if $T \supseteq (0,t_0)$, for some $t_0 >0$. As proven in Lemma \ref{lm:useful_2} below, the function $\theta \mapsto \widetilde \psi(\kappa \theta)$ is uniformly bounded on $E$, which gives that $0 < e^{-\sup T |\widetilde \psi(\kappa \theta)|-\kappa \varrho} \leq e^{-t \widetilde \psi(\kappa \theta) +\kappa (\scalp{\theta}{y})}$, for every $t \in T$, $\theta \in E$ and $y \in B(0,\varrho)$. 

Our second theorem is devoted to finite L\'evy measures. In this case, the condition \textbf{(D)} can not hold for any nonempty set $T \subset (0,\infty)$. If $\inf_{|\xi|=1} \xi \cdot A \xi > 0$ in \textbf{(A)}, then despite the fact that $\nu(\R^d \setminus \left\{0\right\}) < \infty$ each measure $P_t(dx)$ is absolutely continuous with respect to the Lebesgue measure with bounded density $p_t(x)$. L\'evy processes driven by this type of convolution semigroups are often called \emph{jump-diffusions} and play an important role in scientific modelling (see e.g. \cite{Ko}). On the other hand, when $A \equiv 0$ and $\nu(\R^d \setminus \left\{0\right\}) < \infty$, then $\{ P_t,\, t\geq 0 \}$ is a compound Poisson semigroup of measures on $\R^d$, possibly with drift and atoms. In this case, we can still examine the spatial asymptotics at infinity of the functions 
$$\widetilde{p}_t(x) := e^{-t|\nu|} \sum_{n=1}^\infty \frac{t^n\nu^{n*}(x)}{n!},$$ 
which are densities of the absolutely continuous components of $P_t$ (for more details see Preliminaries).

\begin{theorem} \label{thm:main_second}
Let the assumptions \textbf{(A)}-\textbf{(C)} hold. Specifically, let \textbf{(C)} hold with some $E \subset \sfera$ and $\kappa \geq 0$ 
and suppose that $\nu(\R^d \setminus \left\{0\right\}) < \infty$.
If $\inf_{|\xi|=1} \xi \cdot A \xi > 0$ in \textbf{(A)}, then the following hold.
\begin{itemize}
\item[(a)] For every $t > 0$, $\theta \in E$ and $y \in \R^d$, \eqref{eq:res_conv} holds true. 
\item[(b)] If for every compact set $D \subset \R^d$ the convergence in \textbf{(C)} is uniform in $(\theta,y)$ on the rectangle $E \times D$, then \eqref{eq:res_conv} is uniform in $(t,\theta,y)$ on each cuboid $(0,t_0] \times E \times B(0,\varrho) $, $\varrho >0$, $t_0>0$.
\end{itemize}
On the other hand, if $A \equiv 0$, then the same statements (a)-(b) hold for $p_t$ replaced with $\widetilde{p}_t$ in \eqref{eq:res_conv}.
\end{theorem}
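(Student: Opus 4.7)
The plan is to reduce both cases to the asymptotic analysis of the iterated convolutions $\nu^{*n}$, exploiting the fact that finiteness of $\nu$ makes the compound Poisson series expansion for the absolutely continuous part of $P_t$ converge in a strong enough sense that the regularity input \textbf{(D)} is not needed. In the compound Poisson case ($A\equiv 0$) I write directly
\[
  \frac{\widetilde p_t(r\theta-y)}{t\,\nu(r\theta)} \,=\, e^{-t|\nu|}\sum_{n=1}^\infty \frac{t^{n-1}}{n!}\,\frac{\nu^{*n}(r\theta-y)}{\nu(r\theta)}.
\]
In the jump-diffusion case the Gaussian part furnishes a smooth, super-exponentially decaying density $g_t$ such that $p_t=g_t * Q_t$ with $Q_t$ the compound Poisson measure (with drift $b_0$); then $p_t(x)=e^{-t|\nu|}g_t(x-tb_0)+e^{-t|\nu|}\sum_{n\ge 1}\tfrac{t^n}{n!}(g_t*\nu^{*n})(x-tb_0)$, the first term is super-exponentially small compared to $t\,\nu(r\theta)$, and the $n\ge 1$ sum is reduced to the same problem up to an extra smoothing by $g_t$.

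The core step is the pointwise asymptotic
\[
  \frac{\nu^{*n}(r\theta-y)}{\nu(r\theta)} \,\xrightarrow[r\to\infty]{}\, n\,M^{n-1}\,e^{\kappa(\theta\cdot y)}, \qquad M:=\int e^{\kappa(\theta\cdot z)}\,\nu(z)\,dz,
\]
which I propose to prove by induction on $n$. The base $n=1$ is precisely \eqref{eq:add_conv} (and $M<\infty$ is Lemma \ref{lm:useful_2}). For the inductive step, I write $\nu^{*n}=\nu^{*(n-1)}*\nu$ and split the convolution integral into three regions: (i) $|z|\le R$, where the inductive hypothesis combined with dominated convergence yields $(n-1)M^{n-1}e^{\kappa(\theta\cdot y)}$; (ii) $|r\theta-y-z|\le R$, which after the substitution $w=r\theta-y-z$ and application of \eqref{eq:add_conv} to $\nu$ yields $e^{\kappa(\theta\cdot y)}\int \nu^{*(n-1)}(w)\,e^{\kappa(\theta\cdot w)}dw = M^{n-1}e^{\kappa(\theta\cdot y)}$; and (iii) the middle region $\{|z|,|r\theta-y-z|>R\}$, which by the profile domination $\nu\le C_0 f$, a concurrent inductive bound $\nu^{*(n-1)}\le c_n f$, the defining property of $K$ from \eqref{ass:sjp}, and \eqref{eq:lower} is majorized by $\mathrm{const}\cdot K(R)\,\nu(r\theta)$ and hence vanishes as $R\to\infty$. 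Summing (i) and (ii) gives exactly $nM^{n-1}e^{\kappa(\theta\cdot y)}$.

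To interchange limit and summation over $n$, I next derive a Kesten-type uniform bound
\[
  \frac{\nu^{*n}(r\theta-y)}{\nu(r\theta)} \,\le\, C_\varepsilon \, n\, (M+\varepsilon)^{n-1}, \qquad r\ge R_\varepsilon,
\]
for every $\varepsilon>0$, by iterating the same threefold splitting with $R$ fixed so that the $K(R)$-tail contribution is $\le\varepsilon$. Dominated convergence in $n$ together with the elementary identity $\sum_{n\ge 1}t^{n-1}nM^{n-1}/n!=e^{tM}$ then produces the limit $e^{-t(|\nu|-M)}e^{\kappa(\theta\cdot y)}$, which matches $e^{-t\widetilde\psi(\kappa\theta)+\kappa(\theta\cdot y)}$ upon inserting the L\'evy--Khintchine representation of $\widetilde\psi$ for finite $\nu$ with the drift convention underlying $\widetilde p_t$. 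In the jump-diffusion case the factor $\int g_t(w)e^{\kappa(\theta\cdot w)}dw=e^{t\kappa^2\theta\cdot A\theta}$ supplies precisely the missing $-\kappa^2\theta\cdot A\theta$ term of $\widetilde\psi(\kappa\theta)$. The main obstacle is the uniform-in-$n$ Kesten bound above: it genuinely requires $K(r)\to 0$ (not merely $K(1)<\infty$) and careful propagation of constants through the induction, with the dependence on the direction $\theta\in E$ kept under control via \eqref{eq:lower}. The uniform-convergence upgrade in part (b) is then routine, since the dominations in each of the three subregions (i)--(iii) are easily made uniform in $(t,\theta,y)$ over the cuboid once \eqref{eq:add_conv} is uniform on compacta.
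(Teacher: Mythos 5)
Your overall strategy matches the paper's: both cases are reduced to the asymptotics of the iterated convolutions $\nu^{*n}$ via the compound Poisson series, the pointwise limit $\nu^{*n}(r\theta-y)/\nu(r\theta)\to n M^{n-1}e^{\kappa(\theta\cdot y)}$ is established by induction through the same threefold splitting of the convolution integral (two compact regions plus a middle region controlled by $K(R)$), and the Gaussian part of the jump-diffusion case enters only through the extra exponential-moment factor $e^{t\kappa^2\theta\cdot A\theta}$, exactly as in the paper. Your identity $e^{-t(|\nu|-M)}e^{\kappa(\theta\cdot y)}=e^{-t\widetilde\psi(\kappa\theta)+\kappa(\theta\cdot y)}$ (after absorbing the drift $\widetilde b$) is also verified correctly.

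The one place where you deviate and create a potential gap is the uniform-in-$n$ domination needed to interchange limit and summation. You propose a Kesten-type bound $\nu^{*n}(r\theta-y)/\nu(r\theta)\le C_\varepsilon\, n(M+\varepsilon)^{n-1}$ valid for $r\ge R_\varepsilon$, and you correctly flag this as the delicate step. The difficulty is real: your inductive hypothesis is stated along the direction $\theta$ with perturbation $y$, but the region-(i) integral requires the bound at perturbations $y+z$ with $|z|\le R$, so the constants drag through a growing window unless you first pass to a bound in terms of the radial profile $f(|x|)$ rather than $\nu(x)$. The paper bypasses this entirely: Corollary 1(e) (the finite-measure version of Lemma 1(e), inherited from \cite[Lemma 2]{KSz2}) already supplies the cruder but fully sufficient bound $\nu^{*n}(x)\le C_{11}^n[\Psi(\infty)]^{n-1}f(|x|)$ for all $|x|>3r_0$, with a radius independent of $n$. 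Since this is summed against $t^{n-1}/n!$, the exponential growth $C_{11}^n$ is harmless, and crucially this bound needs only $K(1)<\infty$, not $K(r)\to 0$ — your sharper $n(M+\varepsilon)^{n-1}$ form is both harder to propagate and unnecessary. With that substitution your argument closes; the dominated-convergence step, the treatment of $g_t$ in the jump-diffusion case via the Aronson-type estimate (Lemma 3(a)), and the uniformity upgrade for part (b) are all as in the paper.
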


The analog of Corollary \ref{cor:main} for finite L\'evy measures resulting from Theorem \ref{thm:main_second} holds as well.

\begin{corollary} \label{cor:main_2}
Let the assumptions \textbf{(A)}-\textbf{(C)} hold. Specifically, let \textbf{(C)} be satisfied with some $E \subset \sfera$ and $\kappa \geq 0$, and suppose that $\nu(\R^d \setminus \left\{0\right\}) < \infty$. If $\inf_{|\xi|=1} \xi \cdot A \xi > 0$ in \textbf{(A)} and for every compact set $D \subset \R^d$ the convergence in \textbf{(C)} is uniform in $(\theta,y)$ on the rectangle $E \times D$, then for every $\varrho >0$ and $\varepsilon \in (0,1)$ there exists $R >0$ such that 
$$
\left(e^{-t \widetilde \psi(\kappa \theta) + \kappa (\scalp{\theta}{y})}-\varepsilon\right) \, t \, \nu(x) \leq p_t(x -y) \leq \left(e^{-t \widetilde \psi(\kappa \theta) + \kappa (\scalp{\theta}{y})}+\varepsilon\right) \, t \, \nu(x),
$$
for every $t \in T$, $y \in B(0,\varrho)$ and $x \in \Gamma_E \cap B(0,R)^c$. If $A \equiv 0$, then the same bounds hold for $\widetilde{p}_t$.
\end{corollary}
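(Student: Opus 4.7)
The plan is to derive Corollary \ref{cor:main_2} as an immediate consequence of part (b) of Theorem \ref{thm:main_second}, handling the jump-diffusion case ($\inf_{|\xi|=1}\xi\cdot A\xi>0$) and the compound Poisson case ($A\equiv 0$) in parallel, since the only distinction between them is whether one works with $p_t$ or with $\widetilde{p}_t$ on the left-hand side.

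First I would fix $\varrho>0$, $\varepsilon\in(0,1)$, together with the bounded time-set $T$ from the statement, and view
$$
F(r,t,\theta,y) := \frac{p_t(r\theta-y)}{t\,\nu(r\theta)}
$$
as a function of $(r,t,\theta,y)$, with candidate limit $G(t,\theta,y):=e^{-t\widetilde{\psi}(\kappa\theta)+\kappa \scalp{\theta}{y}}$ as $r\to\infty$. Under the stated uniformity hypothesis on \textbf{(C)}, Theorem \ref{thm:main_second}(b) supplies uniform convergence $F(r,\cdot,\cdot,\cdot)\to G$ on the compact rectangle $T\times E\times \overline{B(0,\varrho)}$. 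This lets me choose a single $R>0$ such that $|F(r,t,\theta,y)-G(t,\theta,y)|<\varepsilon$ for every $r\ge R$ and every $(t,\theta,y)$ in that rectangle. To pass from this to the claimed estimate I would use the polar decomposition: any $x\in\Gamma_E\cap B(0,R)^c$ has a unique representation $x=r\theta$ with $r=|x|\ge R$ and $\theta=x/|x|\in E$; plugging into the $\varepsilon$-inequality and multiplying by $t\,\nu(x)>0$ yields the required two-sided bound. The compound Poisson case is identical after replacing $p_t$ by $\widetilde{p}_t$ throughout.

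I do not foresee any real obstacle beyond invoking part (b) of Theorem \ref{thm:main_second}; the corollary is essentially a non-normalized repackaging of that uniform convergence statement. The one point worth emphasising is the need for \emph{genuinely joint} uniformity in all three variables $t$, $\theta$ and $y$ simultaneously, which is exactly what part (b) delivers; had the convergence been only pointwise, or uniform in just one variable at a time, one would be forced to choose $R$ depending on $(t,\theta,y)$ and the cone-wise estimate with a single $R=R(\varrho,\varepsilon,T)$ would break down.
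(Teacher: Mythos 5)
Your proposal is correct and takes exactly the route the paper intends: Corollary \ref{cor:main_2} is stated without proof precisely because it is an immediate $\varepsilon$--$R$ unpacking of the uniform convergence in Theorem \ref{thm:main_second}(b), using the polar decomposition $x = |x|\,\theta$ with $\theta=x/|x|\in E$ and multiplying through by $t\,\nu(x)>0$. The only very minor point is that the rectangle $T\times E\times\overline{B(0,\varrho)}$ need not be compact (neither $T\subset(0,t_0]$ nor $E\subset\sfera$ is assumed compact), but compactness is irrelevant here since Theorem \ref{thm:main_second}(b) already delivers uniform convergence on the cuboid $(0,t_0]\times E\times B(0,\varrho)$ directly.
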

We like to emphasize that compound Poisson semigroups and related L\'evy processes are also widely used in practice, mainly in queuing and risk theory (see e.g. \cite{EKM, Sc, Ka} and references therein). 

The rest of the paper is organized as follows. In Section 2 we recall the standard facts on decompositions of convolution semigroups and prove some auxiliary results. Some of them are of independent interest. In Section 3 we establish the asymptotics at infinity for densities of convolutions of the restricted L\'evy measures $\nu_r$ and for the densities $\bar{p}_t$ of the absolutely continuous parts of the corresponding convolution exponents with $r= 1/\Psi_{-}(1/t)$. Section 4 contains the proof of Theorem \ref{thm:main}, our main result, and Proposition \ref{prop:converse}. In Section 5 we first collect several auxiliary results for finite L\'evy measures which are counterparts of those in Section 2 and then we apply them to prove our second main result, Theorem \ref{thm:main_second}. Section 6 is devoted to detail discussion of applications of our general results to several particular classes of convolution semigroups.   

\bigskip

\section{Preliminaries}

\noindent
Throughout the paper we assume that $\{ P_t,\, t\geq 0 \}$ is a convolution semigroup of probability measures on $\R^d$, $d\in \left\{1,2,...\right\}$, which is uniquely determined by \eqref{eq:Phi_2} with an arbitrary $b \in \R^d$, and a Gaussian matrix $A$ and a L\'evy measure $\nu$ satisfying our framework assumptions \textbf{(A)} and \textbf{(B)}. 

For every $r>0$ we denote by $\{\Pzero^r_t,\; t\geq 0\}$ and $\{\bar{P}^r_t,\; t\geq 0\}$ the semigroups of measures determined by 
$$
  \Fourier(\Pzero^r_t)(\xi) 
    =     \exp\left(t \int_{\Rdwithouto} \left(e^{i\scalp{\xi}{y}}-1-i\scalp{\xi}{y}\right)
            \mathring{\nu}_r(y)dy\right)\, ,  \quad \xi\in\Rd\,, \ t>0 \, ,
$$
and
$$
  \Fourier(\bar{P}^r_t)(\xi) =
  \exp\left(t \int (e^{i\scalp{\xi}{y}}-1)\,
  \nu_r(y)dy\right)\, ,
  \quad \xi\in\Rd\,, \ t>0 \, ,
$$
with
$$
\nuzero_r(y)=\indyk{B(0,r)}(y)\nu(y) \quad \text{and} \quad \nu_r(y)= \indyk{B(0,r)^c}(y)\,\nu(y), \quad r > 0,
$$
respectively. Note that $\{\bar{P}^r_t,\; t\geq 0\}$ is a compound Poisson semigroup of probability measures of the form
$$
  \bar{P}^r_t(dx) =  \exp(t(\nu_r-|\nu_r|\delta_0))(dx) = e^{-t|\nu_{r}|}\delta_{0}(dx) + {\bar{p}}^r_t(x) dx \,,\quad t > 0\,, \ r >0 \, ,
$$
with
$$
\bar{p}^r_t(x) := e^{-t|\nu_{r}|} \sum_{n=1}^\infty \frac{t^n\nu_r^{n*}(x)}{n!},
$$
where $\nu_r^{n*}(x)$ denotes the densities of the $n$-fold convolutions $\nu_r^{n*}(dx)$ of the finite L\'evy measures $\nu_r(dx)= \nu_r(x)dx$.
Furthermore, since
\begin{eqnarray*}
  |\Fourier(\Pzero^r_t)(\xi)| 
  &   =  & \exp\left(-t\int_{0<|y|<r}
           (1-\cos(\scalp{y}{\xi}))\,
           \nu(y)dy\right) \nonumber \\
  &   =  & \exp\left(-t\left(\Re \Phi(\xi)-\int_{|y|\geq r}
           (1-\cos(\scalp{y}{\xi}))\,
           \nu(y)dy\right)\right) \nonumber \\
  & \leq & \exp(-t\Re \Phi(\xi))\exp(2t\nu(B(0,r)^c)),
           \quad \xi\in\Rd, \ r>0, \ t > 0,
\end{eqnarray*}
under the assumption \textbf{(D)}, for every $r>0$ and $t \in T$, the measures $\Pzero^r_t$ are absolutely continuous with respect to
the Lebesgue measure with densities $\pzero^r_t\in C^1_b(\Rd)$. Also, whenever $A \neq 0$, by $\{G_t,\; t\geq 0\}$ we denote the semigroup of Gaussian measures determined by 
$$
  \Fourier(G_t)(\xi) =
  \exp\left(- t \, \scalp{\xi}{A \xi} \right)\, ,
  \quad \xi\in\Rd\, . 
$$
All $G_t(dx)$ are absolutely continuous with respect to the Lebesgue measure with densities $g_t(x)$. To shorten the notation below, we set 
\begin{equation}\label{eq:def_br}
h(t):= \frac{1}{\Psi_{-}\left(\frac{1}{t}\right)}
\quad  \text{and} \quad 
  b_r := \left\{
  \begin{array}{lcc}
    b - \int_{r \leq |y|<1} y \, \nu(y)dy & \mbox{  if  } & r < 1,\\
		b                                & \mbox{  if  } & r = 1, \\
		b + \int_{1 \leq |y|<r} y \, \nu(y)dy & \mbox{  if  } & r > 1.
  \end{array}\right. 
\end{equation}
With the above notation, under the assumption \textbf{(D)}, for every $r >0$ and $t \in T$ we have the following:
\begin{align} \label{eq:basic_conv_1}
\mbox{if \ $A \neq 0$, \ then \ $P_t=G_t \ast \Pzero^r_t \ast \bar{P}^r_t \ast \delta_{t b_r} $ \ \ 
and \ \ $p_t= e^{-t|\nu_r|} g_t \ast\pzero^r_t \ast \delta_{t b_r} + g_t \ast \pzero^r_t \ast {\bar{p}}^r_t \ast \delta_{t b_r}$,} 
\end{align}
and
\begin{align} \label{eq:basic_conv_2}
\mbox{if \ $A \equiv 0$, \ then \ $P_t=\Pzero^r_t \ast \bar{P}^r_t \ast \delta_{t b_r} $ \ \ 
and \ \ $p_t= e^{-t|\nu_r|} \pzero^r_t \ast \delta_{t b_r} + \pzero^r_t \ast {\bar{p}}^r_t \ast \delta_{t b_r}$.} 
\end{align}
The decomposition formulas \eqref{eq:basic_conv_1}-\eqref{eq:basic_conv_2} will be a starting point in the proof of our main result in Theorem \ref{thm:main}. They will be applied with $r=h(t)$, and, therefore, for simplification, below we will write $\pzero_t=\pzero^{h(t)}_t$ and $\bar{p}_t=\bar{p}^{h(t)}_t$. 

In the proof of Theorem \ref{thm:main_second} we will also need a version of \eqref{eq:basic_conv_1}-\eqref{eq:basic_conv_2} for finite L\'evy measures. If $\nu(\R^d \setminus \left\{0\right\}) < \infty$, then by $\{\widetilde{P}_t,\; t\geq 0\}$ we denote a compound Poisson semigroup of probability measures determined by 
$$
  \Fourier(\widetilde{P}_t)(\xi) =
  \exp\left(t \int (e^{i\scalp{\xi}{y}}-1)\,
  \nu(y)dy\right)\, ,
  \quad \xi\in\Rd\,, \ t>0 \, .
$$
Each measure $\widetilde{P}_t$ has the form
$$
  \widetilde{P}_t(dx) =  \exp(t(\nu-|\nu|\delta_0))(dx) = e^{-t|\nu|} \delta_{0}(dx) + {\widetilde{p}}_t(x) dx \,,\quad t\geq 0\, ,
$$
with
$$
\widetilde{p}_t(x) := e^{-t|\nu|} \sum_{n=1}^\infty \frac{t^n\nu^{n*}(x)}{n!}.
$$
Thus, under the assumption $\nu(\R^d \setminus \left\{0\right\}) < \infty$, for every $t>0$, we have the following:
\begin{align} \label{eq:basic_conv_3}
\mbox{if \ $A \neq 0$, \ then \ $P_t= G_t \ast \widetilde{P}_t \ast \delta_{t \widetilde b} $ \ \ 
and \ \ $p_t= e^{-t|\nu|} g_t \ast \delta_{t \widetilde b} 
            + g_t \ast  \widetilde{p}_t(x) \ast \delta_{t \widetilde b}$,} 
\end{align}
and
\begin{align} \label{eq:basic_conv_4}
\mbox{if \ $A \equiv 0$, \ then \ $P_t= \widetilde{P}_t \ast \delta_{t \widetilde b} = e^{-t|\nu|} \delta_{t \widetilde b}(dx) + {\widetilde{p}}_t(x - t \widetilde b) dx  $,} 
\end{align}
where $\widetilde b = b - \int_{|y|<1} y \nu(y) dy$. In the latter case, each $P_t$ has an atom at $t \widetilde b$. 

Recall that
\begin{align} \label{def:G}
K(r):= \sup_{|x| > 1} \frac{\int_{|x -y| > r \atop |y| > r } f(|x-y|) f(|y|) dy}{f(|x|)}, \qquad r \geq 1,
\end{align}
is the parameter function appearing in the assumption \textbf{(B)}. The direct consequence of this assumption is that $K(1) < \infty$,
which has a remarkable impact on the decay properties of the functions $\nu_r^{\ast n}(x)$ and $\bar{p}_t(x)$ at infinity and provides some extra regularity of the profile function $f$. The following lemma collects some useful and basic auxiliary estimates that are a straightforward consequence of the results obtained recently in \cite{KSz2}. In what follows we will often use the fact that (see e.g. \cite[Proposition 1]{KSz1})  
\begin{equation}\label{eq:Psi_eigenschaften}
  |\nu_r|\leq C_3 \Psi(1/r) \ \quad \  \text{with some \ $C_3>0$ \ for $r>0$ \ \ \  and} \ \quad  \ \sup_{r>0} \frac{\Psi(2r)}{\Psi(r)} < \infty.
\end{equation}
The latter growth control condition is often referred as the doubling property of the function $\Psi$.

\begin{lemma}\label{lm:useful} 
Let the assumption \textbf{(B)} holds. Then for every fixed $r_0>0$ we have the following. 
\begin{itemize}
\item[(a)]
There are constants $C_4=C_4(r_0)$ and $C_5=C_5(r_0)$ such that for every $|x| \geq 2r_0$ and $r \in (0,r_0]$ one has
\begin{align*} 
\int_{|x -y| > r_0 \atop |y| > r } f(|x-y|) \nu(y) dy \leq C_4 \Psi\left(\frac{1}{r}\right) f(|x|) \quad \text{and} \quad f(r) \leq C_5 \Psi\left(\frac{1}{r}\right)\frac{1}{r^d}.
\end{align*}
\item[(b)]
There exists a constant $C_6=C_6(r_0) \geq 1$ such that
$$
f(s-r_0) \leq C_6 f(s), \quad s \geq 3r_0.
$$
\item[(c)] For every numbers $C_7, C_8 >0$ there exists a constant $C_9:=C_9(r_0)>0$ such that
$$
e^{-C_7 s \log(1+ C_8 s)} \leq C_9 f(s), \quad s \geq r_0.
$$ 
\item[(d)] There is a constant $C_{10}=C_{10}(r_0)$ such that
\begin{align*}
\int_{|x-y|>r_0} f(|y-x|) \nu_r^{\ast n}(y)\, dy \leq \left(C_{10} \Psi\left(1/r \right)\right)^n f(|x|) ,\quad |x|\geq 3r_0, \ \ r \in (0,r_0], \ n \in \N.
\end{align*}
\item[(e)] There exists $C_{11}=C_{11}(r_0)$ such that for every $n \in \N$ and $r \in (0,r_0]$ we have
  \begin{align*}
    \nu_r^{n*}(x) \leq C_{11}^n \left[\Psi(1/r)\right]^{n-1} f(|x|), \quad |x|>3r_0.
  \end{align*}
\item[(f)] There exists $C_{12}=C_{12}(r_0)$ such that we have
  \begin{align*}
    \bar{p}_t(x) \leq C_{12} \, t \, f(|x|), \quad |x|>3r_0, \ t \in (0,t_0], 
  \end{align*}
	with $t_0:= 1/\Psi(1/r_0)$.
\end{itemize}
\end{lemma}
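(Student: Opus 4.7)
The plan is to harvest all six bounds from a few structural facts: the pointwise domination $\nu(x)\leq C_0 f(|x|)$, the finiteness of $K(r_0)$ for any $r_0>0$, the lower regularity \eqref{ass:low_reg}, and the doubling/total-mass estimates for $\Psi$ in \eqref{eq:Psi_eigenschaften}. Since the authors explicitly state the lemma is a ``straightforward consequence of the results obtained recently in \cite{KSz2}'', my plan is largely to locate the appropriate estimate there and fill in the short combinatorial glue.

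For (a), I would split the inner integral at $|y|=r_0$: on $\{r<|y|\leq r_0\}$ the monotonicity of $f$ together with $|x-y|\geq |x|-r_0\geq r_0$ gives $f(|x-y|)\leq f(r_0)$, while the remaining mass is bounded by $|\nu_r|\leq C_3\Psi(1/r)$; on $\{|y|>r_0\}$ the bound $\nu(y)\leq C_0 f(|y|)$ and the very definition of $K(r_0)$ finish the job, after absorbing $K(r_0)\leq K(1)<\infty$ into the constant and inflating by $\Psi(1/r)/\Psi(1/r_0)\geq 1$. The second inequality in (a) follows from \eqref{ass:low_reg}: monotonicity yields $f(r)r^d\leq C\int_{|y|>r}f(|y|)dy\leq C'\nu(B(0,r)^c)\leq C''\Psi(1/r)$ for small $r$, and for $r$ in a compact subinterval of $(0,\infty)$ both sides are bounded so the inequality holds after enlarging $C_5=C_5(r_0)$. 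Part (b), a near-diagonal regularity of $f$, and part (c), the lower envelope for $f$, are explicit in \cite{KSz2}: (b) is a standard consequence of $K(1)<\infty$ preventing $f$ from collapsing over unit distances, and (c) would be obtained by observing that if $f(s_n)$ were smaller than $e^{-C_7 s_n \log(1+C_8 s_n)}/n$ along some sequence $s_n\to\infty$, then a direct Laplace-type computation of $\int f(|x-y|)f(|y|)dy$ (maximized near $y=x/2$) would force the ratio defining $K(r_0)$ to blow up, contradicting $K(r_0)<\infty$.

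Parts (d), (e), (f) come from a single induction plus a series summation. For (d), I would induct on $n$ by writing $\nu_r^{(n+1)*}=\nu_r^{n*}\ast\nu_r$, interchanging the order of integration, applying the induction hypothesis in the outer variable, and using (a) to pick up one extra factor $C\Psi(1/r)$; some care is needed to handle the complementary region where the shifted base point $x-z$ falls within $3r_0$ of the origin, which is absorbed by the base case and the trivial bound $\nu_r^{n*}\leq f(r_0)\cdot|\nu_r|^{n-1}$ on a ball. Part (e) follows from (d) by splitting $\nu_r^{(n+1)*}(x)=\int\nu_r^{n*}(x-y)\nu_r(y)dy$ at $|x-y|=r_0$ and applying (d) to the outer piece. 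Finally (f) comes from substituting (e) into the Taylor expansion of $\bar{p}_t$:
\[
\bar{p}_t(x)\,\leq\,e^{-t|\nu_{h(t)}|}\sum_{n=1}^{\infty}\frac{t^n}{n!}\,C_{11}^n\,\Psi(1/h(t))^{n-1}\,f(|x|)\,=\,C_{11}\,t\,f(|x|)\,e^{-t|\nu_{h(t)}|}\sum_{n=0}^{\infty}\frac{(tC_{11}\Psi(1/h(t)))^n}{(n+1)!},
\]
and from $h(t)=1/\Psi_{-}(1/t)$ we get $t\Psi(1/h(t))=1$, so the sum is bounded by an absolute constant uniformly in $t\in(0,t_0]$.

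The main obstacle will be (c). Although the statement looks like a mild lower bound, it is precisely the place where $K$-finiteness is used in a genuinely nonlinear way to constrain $f$ from below, and the subtle $s\log(1+C_8 s)$ rate is the exact borderline permitted by $K(1)<\infty$. Once (c) is imported from \cite{KSz2}, the remaining parts are essentially bookkeeping, with the only care being to track constants so that the estimates in (d)--(f) are uniform in $r\in(0,r_0]$ and $t\in(0,t_0]$, since this uniformity is exactly what the asymptotic analysis in the later sections will need.
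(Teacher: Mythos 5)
Your overall strategy mirrors the paper's: the paper itself only sketches the proof, delegating the first inequality in (a), part (c), parts (d)--(e), and part (f) to \cite[Lem. 3, Lem. 1(a), Lem. 2, Lem. 4(b)]{KSz2} respectively, so your plan to import the bulk of the work from that reference is exactly what the authors do. Your self-contained derivations of (d)--(f) by induction on $n$ and summation of the Taylor series are a sound alternative to the reference, and in particular your computation for (f) using $t\,\Psi(1/h(t))=1$ is correct.

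There is, however, a genuine gap in your argument for the first inequality in (a). On the region $\{r<|y|\le r_0\}$ you bound $f(|x-y|)\le f(r_0)$, which is a pure constant and does \emph{not} produce the required factor $f(|x|)$: for profiles that decay (which is the interesting case), $f(r_0)\Psi(1/r)$ is \emph{not} $\le C\,\Psi(1/r)\,f(|x|)$ as $|x|\to\infty$. What you actually need there is $f(|x-y|)\le f(|x|-r_0)$ (since $|x-y|\ge |x|-r_0$ and $f$ is nonincreasing), and then the near-diagonal regularity in part (b) (or its easy extension to $s\ge 2r_0$, since on the compact range $s\in[2r_0,3r_0]$ the ratio $f(s-r_0)/f(s)\le f(r_0)/f(3r_0)$ is bounded) to pass to $f(|x|-r_0)\le C\,f(|x|)$; combining with $\int_{|y|>r}\nu(y)\,dy=|\nu_r|\le C_3\Psi(1/r)$ then gives the claim on that region. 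Note this is the one place in the lemma where the arguments actually interlock (you need a (b)-type input inside (a)), which your write-up misses.

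A smaller issue: your chain for the second inequality in (a), namely $f(r)r^d\le C\int_{|y|>r}f\le C'\nu(B(0,r)^c)$, has two wrong steps. The first inequality is not a consequence of monotonicity alone without a doubling assumption on $f$ (monotonicity only gives $\int_{r<|y|<2r}f\ge c_d\,r^d f(2r)$, which is the wrong side), and the second inequality goes the wrong way, since $\nu\le C_0 f$ gives $\nu(B(0,r)^c)\le C_0\int_{|y|>r}f$, not the reverse. You should instead go directly: \eqref{ass:low_reg} gives $f(r)r^d\le C\,\nu(B(0,r)^c)$ for small $r$, then $\nu(B(0,r)^c)=|\nu_r|\le C_3\Psi(1/r)$ by \eqref{eq:Psi_eigenschaften}, and the range $r\in[r_1,r_0]$ is handled by compactness. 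Finally, the ``trivial bound $\nu_r^{n*}\le f(r_0)|\nu_r|^{n-1}$ on a ball'' invoked in your sketch of (d) needs care: the supremum of $\nu_r$ is $\sim f(r)$, not $f(r_0)$, and blows up as $r\to 0$, so that shortcut is not uniform in $r$; the complementary region must be handled by a different device (the reference \cite{KSz2} does this carefully).
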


\begin{proof}
We first prove the first inequality in (a). Observe that by \eqref{ass:sjp} we have $K(1) < \infty$, 
which is equivalent to the existence of $c_1>0$ such that
$$
\int_{|x-y| > 1 \atop |y| > 1} f(|x-y|)f(|y|) dy \leq c_1 f(|x|), \quad |x| \geq 1.
$$
Since the profile $f$ is non-increasing and strictly positive, this implies that in 
fact for every $r_0>0$ there exists $c_2=c_2(r_0)$ satisfying
\begin{align} \label{eq:eq_aux}
\int_{|x-y| > r_0\atop |y| > r_0} f(|x-y|)f(|y|) dy \leq c_2 f(|x|), \quad |x| \geq 2r_0,
\end{align}
and the first inequality in (a) can be proved by following the lines of the proof of \cite[Lemma 3]{KSz2}.
The second inequality in (a) follows directly from \eqref{ass:low_reg} and \eqref{eq:Psi_eigenschaften}. 

To show (b), observe that by \eqref{eq:eq_aux} one has
$$
f(s) c_1 \geq f(s-r_0) \int_{|y-x_{r_0}| < r_0/2} f(y) dy =: c_3 f(s-r_0), \quad s \geq 3r_0, 
$$
with $ x_{r_0}:=((3r_0)/2,0,...,0)$.

The assertion (c) follows from the proof of part (b) of \cite[Lemma 1 (a)]{KSz2} and the assertions
(d) and (e) holds by \cite[Lemma 2]{KSz2} (the assumption $|\nu|=\infty$ is not needed now). Finally, 
(f) is a direct consequence of \cite[Lemma 4 (b)]{KSz2}. 
\end{proof}

We will need the following lemma. It gives a nontrivial result for $\kappa >0$. 

\begin{lemma}\label{lm:useful_2} 
Let the assumptions \textbf{(B)} and \textbf{(C)} hold with some $E \subset \sfera$ and $\kappa \geq 0$. Then for every fixed $r_0>0$ and $n \in \N$ one has
$$
\int_{\R^d} e^{\kappa (\scalp{\theta}{z})} \nu^{n*}_r(z) \, dz \leq (C_0 /C_1) \big(C_{10} \Psi(1/r) \big)^n, \quad r \in (0,r_0], \ \theta \in E,
$$ 
and
$$
\lim_{R \to \infty} \sup_{(r,\theta) \in (0,r_0] \times E} \int_{|z| > R} e^{\kappa (\scalp{\theta}{z})} \frac{\nu^{n*}_r(z)}{\Psi(1/r)^n} \, dz = 0, \quad n \in \N.
$$
In particular, if $\kappa > 0$, then \eqref{eq:v-moment} holds for every $\xi=k\theta$, with $\theta\in E$, the function $\theta \mapsto \widetilde \psi(\kappa \theta)$ is well defined and uniformly bounded on $E$, and \eqref{eq:hyp_exp_1} holds for $\xi=\kappa\theta$, $\theta \in E$. 
\end{lemma}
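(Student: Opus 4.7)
The plan is to combine Lemma~\ref{lm:useful}(d) with the pointwise asymptotics \eqref{eq:add_conv} via Fatou's lemma, specialising $x = R\theta$ with $\theta\in E$ and passing to the limit $R\to\infty$ along the ray in direction $\theta$. The first inequality is the direct output of this idea; the tail statement will require an additional decomposition in which the hypothesis $K(r)\to 0$ from \eqref{ass:sjp} plays the decisive role. This is also the main obstacle: Lemma~\ref{lm:useful}(d) alone is insensitive to tail restrictions on $|y|$, so it must be replaced by a bound that is explicitly small at infinity.

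For the first inequality, I apply Lemma~\ref{lm:useful}(d) at $x = R\theta$ (with $R$ large), then use $\nu(R\theta-y) \leq C_0 f(|R\theta-y|)$ from \textbf{(B)} together with $\nu(R\theta) \geq C_1 f(R)$ from \eqref{eq:lower} to rewrite it as
$$
\int_{|R\theta-y|>r_0} \frac{\nu(R\theta-y)}{\nu(R\theta)}\,\nu_r^{n*}(y)\,dy \leq \frac{C_0}{C_1}\bigl(C_{10}\Psi(1/r)\bigr)^n.
$$
For each fixed $y$, the integrand converges pointwise to $e^{\kappa(\scalp{\theta}{y})}$ as $R\to\infty$ (using \eqref{eq:add_conv} and the fact that $\indyk{|R\theta-y|>r_0}\to 1$), and Fatou's lemma then yields the first claim. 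The bound is uniform in $\theta\in E$ since none of $C_0,C_1,C_{10}$ depends on $\theta$.

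For the tail statement I would first reduce from the $n$-fold to the single convolution by pigeonhole: since $|y_1+\cdots+y_n|>R$ forces $|y_i|>R/n$ for some $i$, the symmetry of $\nu_r^{n*}$ gives
$$
\int_{|z|>R} e^{\kappa(\scalp{\theta}{z})}\,\nu_r^{n*}(z)\,dz \leq n\,\Bigl(\int_{|y|>R/n} e^{\kappa(\scalp{\theta}{y})}\,\nu_r(y)\,dy\Bigr)\Bigl(\int_{\R^d} e^{\kappa(\scalp{\theta}{y})}\,\nu_r(y)\,dy\Bigr)^{\!n-1},
$$
the second factor being already controlled by the first inequality. For the first factor, taking $R$ so large that $R/n>r_0$ (so that $\nu_r\equiv\nu$ on $\{|y|>R/n\}$), I rerun the Fatou argument starting from the defining inequality of $K$, namely
$$
\int_{|x-y|>R/n,\,|y|>R/n} f(|x-y|)\,f(|y|)\,dy \leq K(R/n)\,f(|x|),\qquad |x|>1,
$$
at $x = R'\theta$ with $R'\to\infty$. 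The same chain of substitutions $f\geq\nu/C_0$ (applied twice, to $|x-y|$ and to $|y|$) and $f(R')\leq\nu(R'\theta)/C_1$, followed by division by $\nu(R'\theta)$ and Fatou's lemma, yields
$$
\int_{|y|>R/n} e^{\kappa(\scalp{\theta}{y})}\,\nu(y)\,dy \leq \frac{C_0^2}{C_1}\,K(R/n),
$$
which tends to $0$ uniformly in $\theta\in E$ by \eqref{ass:sjp}. Dividing by $\Psi(1/r)^n$ and using $\Psi(1/r)\geq\Psi(1/r_0)$ on $(0,r_0]$ delivers the required uniform vanishing.

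For the ``in particular'' assertion, the case $n=1$ of the first inequality (restricted to $|y|>1$) gives \eqref{eq:v-moment} for $\xi=\kappa\theta$, uniformly in $\theta\in E$. Hence \cite[Thm.~25.17]{Sato} applies and yields that $\widetilde\psi(\kappa\theta)$ is well defined and \eqref{eq:hyp_exp_1} holds. To check uniform boundedness of $\theta\mapsto\widetilde\psi(\kappa\theta)$ on $E$ I would estimate \eqref{eq:hyp_exp} termwise: the drift and Gaussian parts are trivially bounded; the small-jump integral is controlled by the Taylor bound $|1-e^a+a|\leq C a^2$ (valid for $|a|\leq\kappa$) together with the L\'evy condition $\int_{B(0,1)}|y|^2\,\nu(dy)<\infty$; and the large-jump integral is dominated by $|\nu_1|+\int_{|y|\geq 1}e^{\kappa(\scalp{\theta}{y})}\,\nu(dy)$, which is uniformly bounded on $E$ by what has just been proved.
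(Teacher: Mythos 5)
Your proof is correct, and the first inequality and the ``in particular'' part mirror the paper's argument exactly (Lemma~\ref{lm:useful}(d), Fatou along the ray $s\theta$, $s\to\infty$, then Sato's Theorem~25.17 plus a Taylor expansion of $e^{\kappa\scalp{\theta}{y}}$; the paper merely compresses the boundedness check that you spell out).

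For the tail estimate, however, you take a genuinely different route. The paper invokes Lemma~\ref{lm:useful}(e), i.e.\ the pointwise bound $\nu_r^{n*}(z)\leq C_{11}^n\Psi(1/r)^{n-1}f(|z|)$ for $|z|>3r_0$, replaces the $n$-fold convolution by $f(|z|)$ inside the $(n{+}1)$-fold integral, and then applies the defining inequality of $K$ once to the resulting double integral, giving a bound proportional to $K(R)$ (the paper has a typo, writing $K(r)$ where $K(R)$ is meant). You instead decompose $\{|z_1+\cdots+z_n|>R\}$ by pigeonhole into events $\{|z_i|>R/n\}$, factor the $n$-fold integral into a tail factor over $\{|y|>R/n\}$ and $n-1$ full factors, control the latter by the already-proved first inequality, and control the tail factor by running the Fatou/$K$-argument from scratch starting at the inequality $\int_{|x-y|>R/n,|y|>R/n}f(|x-y|)f(|y|)dy\leq K(R/n)f(|x|)$. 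Both arguments ultimately rest on $K(\cdot)\to 0$ and reduce the bound to $1/\Psi(1/r)\leq 1/\Psi(1/r_0)$, so they buy the same uniformity; yours avoids any appeal to Lemma~\ref{lm:useful}(e) at the cost of a slightly heavier combinatorial decomposition and the need to prove the first inequality before the tail estimate (which you do). In short: same ingredients, differently assembled, both valid.
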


\begin{proof}
Fix $r_0 >0$ and $n \in \N$. By Lemma \ref{lm:useful} (d), Fatou's Lemma and the assumption \textbf{(C)}, we have
$$
  \int_{\R^d} e^{\kappa (\scalp{\theta}{z})} \nu^{n*}_r(z) \, dz 
	\leq \liminf_{s \to \infty} \int_{|s \theta-z|>r_0} \frac{\nu(s \theta-z)}{\nu(s \theta)} \nu_r^{n \ast}(z)\, dz 
	\leq \frac{C_0\left(C_{10} \Psi\left(1/r \right)\right)^n}{C_1}, \quad r \in (0,r_0], \ \theta \in E,
$$
which is exactly the first inequality. Moreover, by Lemma \ref{lm:useful} (e), for $r \in (0,r_0]$, $\theta \in E$, $s>1$ and $R > 3r_0$,
$$
  \int_{|s \theta-z|>R \atop |z| > R} \frac{\nu(s \theta-z)}{\nu(s \theta)} \frac{\nu_r^{n*}(z)}{\Psi\left(1/r \right)^n}\, dz 
	\leq \frac{C_{11}^n}{\Psi(1/r_0)}\int_{|s \theta-z|>R \atop |z| > R} \frac{\nu(s \theta-z)}{\nu(s \theta)} f(|z|)\, dz 
	\leq \frac{C_0 C_{11}^n}{C_1 \Psi(1/r_0)} K(r).
$$
Thus, by taking the $\liminf$ as $s \to \infty$ on both sides of the inequality and by applying Fatou's Lemma one more time, we get
$$
  0 \leq \int_{|z| > R} e^{\kappa (\scalp{\theta}{z})} \frac{\nu^{n*}_r(z)}{\Psi(1/r)^n} \, dz 
	\leq \frac{C_0 C_{11}^n}{C_1 \Psi(1/r_0)} K(r), \quad r \in (0,r_0], \ \ \theta \in E, \ \ R > 3r_0.
$$ 
Since the bound on the right hand side is uniform in $(r,\theta)$ on $(0,r_0] \times E$ and $K(r) \to 0$ as $R \to \infty$ by \textbf{(B)}, we get the claimed uniform convergence. 

The second assertion follows directly from the inequality proven above (with $n=1$), \cite[Th. 25.17]{Sato} and the Taylor expansion for the function $e^{\kappa (\scalp{\theta}{z})}$.
\end{proof}

We now discuss some known properties of the densities $\pzero_t$, which are used in the present paper. As proven in \cite[Lemma 8]{KSz1}, if \textbf{(D)} holds with some $T \subset (0,\infty)$, then there are constants $C_7$, $C_8$ and $C_9$ (dependent of $T$) such that
  \begin{equation}\label{eq:small_jumps_est}
    \pzero_t(x)\leq C_9 h(t)^{-d} 
    \exp\left[  \frac{-C_7|x|}{h(t)}\log\left(1+\frac{C_8|x|}{h(t)}\right)\right],\quad t \in T, \ x\in\Rd.
  \end{equation}
Denote
$$
F_T(r) := \exp\left[-C_7 r \log\left(1+C_8 r\right)\right], \quad r > 0.
$$
The subscript $T$ in the notation $F_T$ indicates that this function depends on a given set $T$ appearing in \textbf{(D)} via the constants $C_7$ and $C_8$. We will also need the following fact.

\begin{lemma} \label{lem:lambda_est}
Let the assumptions \textbf{(A)} and \textbf{(B)} hold. 
Then the following hold.
\begin{itemize}
\item[(a)] If $\inf_{|\xi|=1} \xi \cdot A \xi > 0$ in \textbf{(A)}, then for every $t_0 > 0$ there exists $R_0 > 0$ and $C_{13}, C_{14} >0$ such that
$$  
g_t(x) \leq C_{13} t f(|x|) e^{-C_{14}|x|^2}, \quad \text{as long as \ $t \in (0,t_0]$  \ and \ $|x| \geq R_0$.}
$$
\item[(b)] If, furthermore, \textbf{(D)} holds with some set $T \subset (0,\infty)$, then for every $r_0 > 0$ there exists $R_0 \geq r_0$ and $C_{15}, C_{16} >0$ such that
$$  
\lambda_t(x) \leq C_{15} t f(|x|) e^{-C_{16}|x| \log(1+C_{16}|x|)}, \quad \text{as long as \ $t \in T$, \ $h(t) \leq r_0$ \ and \ $|x| \geq R_0$,}
$$
where
$$
\lambda_t(x) = \left\{\begin{array}{ll}
\pzero_t(x) &\mbox{if } A \equiv 0, \\
\pzero_t \ast g_t (x) & \mbox{otherwise} \, .
\end{array}\right.
$$
\end{itemize}
\end{lemma}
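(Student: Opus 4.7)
For part (a), I would start from the standard Gaussian upper estimate $g_t(x) \leq c_1 t^{-d/2} e^{-|x|^2/(c_2 t)}$, available because $\inf_{|\xi|=1} \xi \cdot A\xi > 0$, and then rearrange the exponent so as to pull out a factor of $t$. Using $t \leq t_0$, I write $e^{-|x|^2/(c_2 t)} \leq e^{-|x|^2/(2c_2 t)} \, e^{-|x|^2/(2c_2 t_0)}$. The function $\varphi(t) = t^{-d/2-1} e^{-|x|^2/(2c_2 t)}$ has its unique critical point at $t^{\ast} = |x|^2/(c_2(d+2))$, so for $|x|^2 \geq c_2(d+2) t_0$ it is monotone increasing on $(0, t_0]$ and $\sup_{t \leq t_0} \varphi(t) = \varphi(t_0)$, which is of Gaussian order. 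This yields $g_t(x) \leq C\, t\, e^{-|x|^2/(c_2 t_0)}$. Splitting this remaining Gaussian in two equal pieces and invoking Lemma \ref{lm:useful}(c) together with the observation that $|x|^2$ dominates $|x|\log(1+|x|)$ for large $|x|$ then gives $C_{13} t f(|x|) e^{-C_{14}|x|^2}$ with $C_{14} = 1/(2c_2 t_0)$.

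For part (b) in the case $A \equiv 0$, the starting point is \eqref{eq:small_jumps_est}, $\pzero_t(x) \leq C_9 h(t)^{-d} F_T(|x|/h(t))$. The plan is to absorb the potentially large factor $h(t)^{-d}$ into the target form $C t f(|x|) e^{-C_{16}|x|\log(1+C_{16}|x|)}$ by exploiting the super-exponential decay of $F_T$. The key algebraic device is the monotonicity identity
\[
F_T(|x|/h(t)) \leq F_T(|x|/r_0)^{r_0/h(t)} \qquad \text{whenever \ } h(t) \leq r_0,
\]
which follows from $\log(1+C_8|x|/h(t)) \geq \log(1+C_8|x|/r_0)$. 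Replacing $r_0$ by $r_0/2$ if necessary, I would work under the stronger constraint $h(t) \leq 2r_0/3$; the complementary regime $h(t) \in (2r_0/3, r_0]$ is immediate since $t$ is then pinned to a compact interval bounded away from $0$. Split the exponent as $r_0/h(t) = 1 + \tfrac12 + (r_0/h(t) - \tfrac32)$ to obtain three factors. The first, $F_T(|x|/r_0)$, is bounded by $C f(|x|)$ through Lemma \ref{lm:useful}(c). The second, $F_T(|x|/r_0)^{1/2}$, coincides with $e^{-C_{16}|x|\log(1+C_{16}|x|)}$ for $C_{16} := \min(C_7/(2r_0), C_8/r_0)$. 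The third, together with the $h(t)^{-d}$ prefactor, is controlled by first using $|x|\log(1+C_8|x|/r_0) \geq c_0$ for $|x| \geq R_0$ to obtain $h(t)^{-d} F_T(|x|/r_0)^{r_0/h(t)-3/2} \leq C h(t)^{-d} e^{-c_0 r_0/h(t)}$, and then invoking the doubling property \eqref{eq:Psi_eigenschaften}: one has $\Psi(s) \leq c s^\beta$ for $s$ large, hence $t = 1/\Psi(1/h(t)) \geq c^{-1} h(t)^\beta$. This lets me factor $h(t)^{-d} e^{-c_0 r_0/h(t)} = h(t)^\beta \cdot \bigl[h(t)^{-d-\beta} e^{-c_0 r_0/h(t)}\bigr]$, where the bracket is uniformly bounded on $(0, r_0]$ once $R_0$ and hence $c_0$ are large enough that the maximum of $s^{d+\beta} e^{-c_0 r_0 s}$ falls outside $[1/r_0, \infty)$, while $h(t)^\beta \leq c' t$.

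For the case $A \not\equiv 0$, where $\lambda_t = \pzero_t \ast g_t$, combine the bounds on $\pzero_t$ (just proved) and on $g_t$ (from part (a)) in the convolution integral, splitting $\R^d$ according to whether $|y| \leq |x|/2$ or $|y| > |x|/2$. In each region, exactly one of the two factors in the integrand has argument of norm at least $|x|/2$ and thus supplies the super-exponential decay in $|x|$, while the other factor integrates to a constant at most $1$. Iterated application of Lemma \ref{lm:useful}(b) allows me to replace $f(|x-y|)$ (resp.\ $f(|y|)$) by $C f(|x|)$ at the cost of an exponential factor in $|x|/r_0$, which is easily absorbed into the super-exponential.

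The main obstacle is the second paragraph above, specifically the absorption of $h(t)^{-d}$ into a bound of order $t f(|x|)$: since $h(t) \to 0^{+}$ as $t \to 0^{+}$ whenever $|\nu|=\infty$, this prefactor is unbounded, and the bound survives only because $F_T(|x|/h(t))$ decays in $1/h(t)$ faster than any polynomial, matched against the polynomial upper bound on $\Psi$ ensured by its doubling property.
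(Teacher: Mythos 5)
Your argument is correct and, despite some superficial rearrangements, follows the same route as the paper: for (a) the Gaussian upper bound combined with a monotonicity-in-$t$ observation plus Lemma~\ref{lm:useful}(c), and for (b) the small-jump estimate \eqref{eq:small_jumps_est} whose exponent is split into three pieces --- one converted to $f(|x|)$ by Lemma~\ref{lm:useful}(c), one retained as the super-exponential factor, and one matched against the polynomial growth bound on $\Psi$ (equivalently the doubling property) to absorb $h(t)^{-d}$ and produce the factor $t$ --- followed by the same two-region split of the convolution when $A\neq 0$. Your device $F_T(|x|/h(t)) \leq F_T(|x|/r_0)^{r_0/h(t)}$ is a clean way to organise the exponent split that the paper instead performs directly on $|z|/h(t)$, but the underlying mechanism is identical.
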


\begin{proof}
We first prove (a). If $\inf_{|\xi|=1} \xi \cdot A \xi > 0$, then it is known that there exist $c_1, c_2 >0$ such that 
the following Aronson-type upper estimate holds
$$
  g(x) \leq c_1 t^{-d/2} e^{-c_2 \frac{|x|^2}{t}}, \quad t > 0, \ \ x \in \R^d.
$$
We may assume that $t_0 > 1$. Suppose first that $t \in [1, t_0]$. The bound above implies
$$
  g_t(x) \leq c_1 t e^{-(c_2/t_0) |x|^2}
$$
and we can easily find $R_0 > 0$ large enough such that for $|x| \geq R_0$ the function $e^{-\frac{c_2}{2t_0} |x|^2}$ is not bigger than $e^{-|x| \log(1+|x|)}$. Then, by Lemma \ref{lm:useful} (c), 
$$
  g_t(x) \leq c_3 t f(|x|) e^{-\frac{c_2}{2t_0} |x|^2}, \quad t \in [1, t_0], \  \ |x| \geq R_0. 
$$ 
On the other hand, if $0 < t < 1$, then 
$$
g_t(x) \leq c_1 t^{-d/2} e^{-c_2 \frac{|x|^2}{t}} \leq c_1 t \, \left(\frac{1}{\sqrt{t}}\right)^{d+2} e^{-\frac{c_2 r_0^2}{2} \left(\frac{1}{\sqrt{t}}\right)^2} e^{-c_2 \frac{|x|^2}{2}}, \quad |x| \geq R_0.
$$
Now, by increasing $R_0$ if necessary, we may get $e^{-c_2 \frac{|x|^2}{4}} \leq e^{-|x| \log(1+|x|)}$ for $|x| \geq R_0$, and again, by Lemma \ref{lm:useful} (c), 
$$
g_t(x) \leq c_4 t f(|x|) e^{-c_2 \frac{|x|^2}{4}}, \quad t \in (0,1), \ \ |x| \geq R_0. 
$$
This completes the proof of part (a). 

We now consider (b). Fix $r_0 >0 $ and assume first that $\inf_{|\xi|=1} \xi \cdot A \xi > 0$. We have
$$
\pzero_t \ast g_t (x) = \int_{|z| > \frac{|x|}{4}} \pzero_t(x-z) g_t(z) dz + \int_{|z| \leq \frac{|x|}{4}} \pzero_t(x-z) g_t(z) dz \leq \sup_{|z| > \frac{|x|}{4}} g_t(z) + \sup_{|z| > \frac{3}{4}|x|} \pzero_t(z)
$$
and it is enough to estimate both suprema on the right hand side for large $|x|$. It follows from the part (a) that we can find $R_0 \geq r_0$ large enough such that for $|x| \geq R_0$ we have $\sup_{|z| > \frac{|x|}{4}} g_t(z) \leq c_5 t f(|x|) e^{-|x| \log(1+|x|)}$, whenever $h(t) \leq r_0$. To deal with the second supremum, we note that by \eqref{eq:small_jumps_est} we have 
$$
\sup_{|z| > \frac{3}{4}|x|} \pzero_t(z) \leq C_9 \left(\frac{1}{h(t)}\right)^{d} e^{-C_7 \frac{R_0}{4h(t)}} F_T\left(\frac{|x|}{4r_0}\right) F_T\left(\frac{|x|}{4r_0}\right), \quad t \in T, \ |x| \geq R_0.
$$
It follows from \cite[Lemma 3.6.22]{J1} that $\Psi(r) \leq 2\Psi(1)(1+r^2)$, $ r > 0$, which implies that
$$
e^{-C_7 \frac{R_0}{4h(t)}} \leq c_7 h(t)^{d+2} = \frac{c_8 h(t)^{d+2} }{1+r_0^2} 
\leq \frac{c_8 h(t)^{d+2} }{1+h(t)^2} = \frac{c_8 h(t)^d }{1+\frac{1}{h(t)^2}} \leq \frac{c_9 h(t)^d }{\Psi\left(\frac{1}{h(t)}\right)} = c_9 t h(t)^d
$$
as long as $h(t) \leq r_0$. This and Lemma \ref{lm:useful} (c) finally give
$$
\sup_{|z| > \frac{3}{4}|x|} \pzero_t(z) \leq c_{10} t f(|x|) e^{-c_{11}|x| \log(1+c_{11}|x|)}, \ t \in T, \ h(t) \leq r_0, \ |x| \geq R_0,
$$
which completes the proof of (b) in the case $\inf_{|\xi|=1} \xi \cdot A \xi > 0$. The proof of (b) in case $A \equiv 0$ follows directly that the argument leading to the upper bound of $\sup_{|z| > \frac{3}{4}|x|} \pzero_t(z)$ above and is omitted.
\end{proof}

\bigskip

\section{Asymptotics of convolutions of the L\'evy measures }

The following two lemmas will be basic for our further investigations.

\begin{lemma} \label{lem:conv}
Let the assumptions \textbf{(B)} and \textbf{(C)} hold with some $E \subset \sfera$ and $\kappa \geq 0$. Moreover, let $r_0>0$ be arbitrary. Then the following hold. 
\begin{itemize}
\item[(a)]
For every $n \in \N$, $r \in (0,r_0]$, $\theta \in E$ and $y \in \R^d$
\begin{equation}\label{eq:conv_limit_0_0}
	\lim_{s \to \infty} \frac{\nu_r^{n*}(s \theta-y)}{\nu_r(s \theta)} = 
	e^{\kappa (\scalp{\theta}{y})} n \left(\int_{|z|>r}e^{\kappa (\scalp{\theta}{z})}\nu(z)\, dz\right)^{n-1}.
\end{equation}
\item[(b)] If the convergence in \textbf{(C)} is uniform in $(\theta,y)$ on each rectangle $E \times D$, for every compact set $D \subset \R^d$, then for any $n \in \N$ the convergence \begin{equation}\label{eq:conv_limit_0}
	\lim_{s \to \infty} \frac{\nu_r^{n*}(s \theta-y)}{\nu_r(s \theta)\Psi(1/r)^{n-1}} = 
	e^{\kappa (\scalp{\theta}{y})} n \left(\frac{\int_{|z|>r}e^{\kappa (\scalp{\theta}{z})}\nu(z)\, dz}{\Psi(1/r)}\right)^{n-1}
\end{equation}  
is uniform in $(r,\theta,y)$ on each cuboid $(0,r_0] \times E \times B(0,\varrho)$, $\varrho >0$.
\end{itemize}
\end{lemma}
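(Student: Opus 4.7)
My plan is to prove both parts by induction on $n$. The base case $n=1$ is essentially \textbf{(C)}: for $s$ sufficiently large (so that $|s\theta - y| > r$), one has $\nu_r(s\theta - y) = \nu(s\theta - y)$ and $\nu_r(s\theta) = \nu(s\theta)$, and the ratio tends to $e^{\kappa(\scalp{\theta}{y})}$ by \eqref{eq:add_conv}. For part (b), this threshold can be chosen uniform on $B(0,\varrho)$ since $|y|$ is bounded, and uniformity of the limit follows from the uniform convergence hypothesis in \textbf{(C)}.

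\textbf{Inductive step.} Assuming the statement holds for $n$, I would write $\nu_r^{(n+1)*} = \nu_r \ast \nu_r^{n*}$ and, for a large cutoff $M > 3r_0$, split
\[
\nu_r^{(n+1)*}(s\theta - y) = \int \nu_r(s\theta - y - z)\,\nu_r^{n*}(z)\,dz = I_1 + I_2 + I_3,
\]
according to whether $z$ lies in $A_1 = \{|z| \le M\}$, $A_2 = \{|s\theta - y - z| \le M\}$, or $A_3 = \{|z|>M,\,|s\theta - y - z|>M\}$; these regions are pairwise disjoint and exhaust $\R^d$ once $s > 2M + \varrho$. On $A_1$, the base case gives $\nu_r(s\theta - y - z)/\nu_r(s\theta) \to e^{\kappa(\scalp{\theta}{y+z})}$ pointwise; dominated convergence (with uniform domination via Lemma \ref{lm:useful}(b) and the lower bound $\nu(s\theta) \ge C_1 f(s)$ from \eqref{eq:lower}) then produces
\[
\frac{I_1}{\nu_r(s\theta)} \xrightarrow[s\to\infty]{} e^{\kappa(\scalp{\theta}{y})} \int_{|z|\le M} e^{\kappa(\scalp{\theta}{z})} \nu_r^{n*}(z)\,dz \xrightarrow[M\to\infty]{} e^{\kappa(\scalp{\theta}{y})} I^n,
\]
where $I := \int_{|z|>r} e^{\kappa(\scalp{\theta}{z})}\nu(z)\,dz$, and the second limit uses Lemma \ref{lm:useful_2}. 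For $I_2$, the change of variables $w = s\theta - y - z$ reduces the domain to $\{|w| \le M\}$ (the condition $|z|>M$ being automatic for $s$ large), and the inductive hypothesis applied pointwise to $\nu_r^{n*}(s\theta - (y+w))/\nu_r(s\theta)$, combined with dominated convergence and then $M \to \infty$, gives
\[
\frac{I_2}{\nu_r(s\theta)} \longrightarrow e^{\kappa(\scalp{\theta}{y})}\, n\, I^{n-1} \cdot I = e^{\kappa(\scalp{\theta}{y})}\, n\, I^n.
\]
For $I_3$, Lemma \ref{lm:useful}(e), assumption \textbf{(B)}, and the definition of $K$ yield
\[
I_3 \le C_0 C_{11}^n \Psi(1/r)^{n-1} \int_{|z|>M,\,|s\theta-y-z|>M} f(|s\theta - y - z|)\, f(|z|)\,dz \le c\,\Psi(1/r)^{n-1} K(M) f(|s\theta - y|);
\]
Lemma \ref{lm:useful}(b) together with \eqref{eq:lower} bounds $f(|s\theta - y|) \le c' \nu_r(s\theta)$ for $s$ large, giving $I_3/\nu_r(s\theta) \le c'' K(M) \Psi(1/r)^{n-1}$, which vanishes as $M \to \infty$ for each fixed $r$. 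Summing the three contributions produces the factor $1 + n = n + 1$, closing the induction for (a).

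\textbf{Uniformity and main obstacle.} For part (b), the same split suffices, but every estimate must be made uniform in $(r,\theta,y) \in (0,r_0] \times E \times B(0,\varrho)$. The normalization by $\Psi(1/r)^{n-1}$ in \eqref{eq:conv_limit_0} is precisely chosen so that the $I_3$ bound becomes $I_3/(\nu_r(s\theta)\Psi(1/r)^n) \le c'' K(M)/\Psi(1/r_0)$, vanishing uniformly in $r \le r_0$ as $M \to \infty$ (using that $\Psi$ is nondecreasing, so $\Psi(1/r) \ge \Psi(1/r_0)$). The dominated convergence steps for $I_1$ and $I_2$ proceed uniformly thanks to the uniform convergence hypothesis in \textbf{(C)} on $E \times B(0,\varrho + M)$, the uniform estimates of Lemma \ref{lm:useful}, and the uniform tail vanishing $\sup_{(r,\theta)} \int_{|z|>R} e^{\kappa(\scalp{\theta}{z})} \nu_r^{n*}(z)/\Psi(1/r)^n\,dz \to 0$ supplied by Lemma \ref{lm:useful_2}. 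The main technical obstacle throughout is the control of $I_3$: here the full strength of \eqref{ass:sjp}, encoded in $K(M) \searrow 0$, is indispensable, for it is the only mechanism that tames the region where both $|z|$ and $|s\theta - y - z|$ are simultaneously large and compactifies the convergence. Without this double-tail control the induction would not close.
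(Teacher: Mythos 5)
Your proof is correct and follows essentially the same approach as the paper's: induction on $n$, the same three-region splitting of the convolution integral (with a large cutoff you call $M$, the paper calls $R$), bounded/dominated convergence on the two bounded regions, and the key use of $K(M)\to 0$ from \eqref{ass:sjp} to kill the double-tail region $A_3$, together with Lemma~\ref{lm:useful_2} to compactify the tails uniformly in $(r,\theta)$ for part (b). The only cosmetic difference is that the paper tracks an explicit error quantity $V_{n,r,\theta,y}(s)$ and bounds it term by term, whereas you pass to the limits in each piece directly; the substance is identical.
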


\begin{proof}
Let $E \subset \sfera$ and $\kappa \geq 0$ be as in the assumption \textbf{(C)}. Fix $r_0>0$. 

\vspace{0.1cm}

\noindent
(a) We first establish the pointwise convergence. The argument is based on induction on $n$. For $n=1$ the assertion is just the assumption \textbf{(C)}. Suppose now that the convergence in \eqref{eq:conv_limit_0_0} (or, equivalently, the pointwise convergence in \eqref{eq:conv_limit_0}) holds for some $n \in \N$ and every $\theta \in E$ and $y \in \R^d$. Fix $y \in \R^d$, $\theta \in E$ and $r \in (0,r_0]$. Denote
$$
V_{n,r,\theta,y}(s) := \left|\frac{\nu_r^{(n+1)*}(s\theta-y)}{[\Psi(1/r)]^{n} \nu(s \theta)} - \, e^{\kappa (\scalp{\theta}{y})} (n+1) \left(\frac{\int_{\R^d}e^{\kappa (\scalp{\theta}{z})}\nu_r(z)\, dz}{\Psi(1/r)}\right)^{n}\right|, \quad s >0.
$$
We will prove that $\lim_{s \to \infty} V_{n,r,\theta,y}(s) = 0$. Let $R>3r_0$ and $s>2R+|y|$. Observe that
\begin{align*}
\nu_r^{(n+1)*}(s\theta-y) & = \int_{|z| \leq R} \nu_r(s\theta-y-z) \nu^{n*}_r(z) \, dz \\ & + \int_{|s\theta-y-z| > R \atop |z| > R} \nu_r(s\theta-y-z)\nu^{n*}_r(z) dz + \int_{|w| \leq R} \nu^{n*}_r(s\theta - y - w) \nu_r(w) \, dw
\end{align*}
and 
$$
	e^{\kappa (\scalp{\theta}{y})} (n+1) \left(\frac{\int_{\R^d}e^{\kappa (\scalp{\theta}{z})}\nu_r(z)\, dz}{\Psi(1/r)}\right)^{n} 
	= \frac{\int_{\R^d}e^{\kappa \big(\scalp{\theta}{(y+z)}\big)}\nu^{n*}_r(z)\, dz}{[\Psi(1/r)]^n}
	+  e^{\kappa (\scalp{\theta}{y})} n \left(\frac{\int_{\R^d}e^{\kappa (\scalp{\theta}{z})}\nu_r(z)\, dz}{\Psi(1/r)}\right)^{n},
$$
since
		\begin{eqnarray*}
		\int_{\R^d} e^{\kappa (\scalp{\theta}{z})} \nu_r^{n*}(z)\, dz
		&  =   & \int_{\R^d}\int_{\R^d}...\int_{\R^d} e^{\kappa \big(\scalp{\theta}{(z_1+...+z_n)}\big)} \, 
		         \nu_r(dz_1)\nu_r(dz_2)...\nu_r(dz_n) \\
		&  =   & \left( \int e^{\kappa (\scalp{\theta}{z})} \nu_r(z)\, dz\right)^n.
	\end{eqnarray*}
Thus, for arbitrary $R>3r_0$ and $s > 2R + |y|$, we have
	\begin{align*}
 & V_{n,r,\theta,y}(s) \\ & \leq \underbrace{\int_{|z| \leq R} \left|\frac{\nu_r(s\theta-y-z)}{\nu(s\theta)} - e^{\kappa \big(\scalp{\theta}{(y+z)}\big)}\right|  \frac{\nu^{n*}_r(z)}{[\Psi(1/r)]^n} \, dz}_{=: I_1(s,r,\theta,y,R)} + \, e^{\kappa (\scalp{\theta}{y})} \underbrace{\int_{|z| > R} e^{\kappa (\scalp{\theta}{z})} \frac{\nu^{n*}_r(z)}{[\Psi(1/r)]^n} \, dz}_{=: I_2(r,\theta,R)}\\
		& + \underbrace{\int_{|w| \leq R} \left|\frac{\nu^{n*}_r(s\theta - y - w)}{\nu(s\theta) \, [\Psi(1/r)]^{n-1}} - e^{\kappa \big(\scalp{\theta}{(y+w)}\big)} n \left(\frac{\int_{\R^d}e^{\kappa (\scalp{\theta}{z})}\nu_r(z)\, dz}{\Psi(1/r)}\right)^{n-1}\right| \, \frac{\nu_r(w)}{\Psi(1/r)} \, dw }_{=: I_3(s,r,\theta,y,R)}  \\
		& + e^{\kappa (\scalp{\theta}{y})} n \left(\frac{\int_{\R^d}e^{\kappa (\scalp{\theta}{z})}\nu_r(z)\, dz}{\Psi(1/r)}\right)^{n-1}
		        \underbrace{ \int_{|w| >R} e^{\kappa (\scalp{\theta}{w})} \frac{\nu_r(w)}{\Psi(1/r)} dw }_{=: I_4(r,\theta,R)}+  \underbrace{\int_{|s\theta-y-z| > R \atop |z| > R} \frac{\nu_r(s\theta-y-z)\nu^{n*}_r(z)}{\nu(s\theta) [\Psi(1/r)]^n} dz }_{=: I_5(s,r,\theta,y,R)},
	\end{align*}	
which can be rewritten in short as
	\begin{align} \label{eq:V_est}
V_{n,r,\theta,y}(s) & \leq I_1(s,r,\theta,y,R) + e^{\kappa |y|} I_2(r,\theta,R) + I_3(s,r,\theta,y,R) \\ & \ \ \ \ + e^{\kappa|y|} n \left(\frac{\int_{\R^d}e^{\kappa (\scalp{\theta}{z})}\nu_r(z)\, dz}{\Psi(1/r)}\right)^{n-1} I_4(r,\theta,R)+ I_5(s,r,\theta,y,R). \nonumber
	\end{align}	
We first consider $I_1$ and $I_3$. Observe that by \eqref{eq:add_conv} and the induction hypothesis \eqref{eq:conv_limit_0_0}, both integrands under these two integrals go to zero pointwise as $s \to \infty$, for any $z$ and $w \in B(0,R)$, respectively. Moreover, by Lemma \ref{lm:useful} (b) and (e) and Lemma \ref{lm:useful_2}, there exist constants $c_1$, $c_2=c_2(|y|,R)$, $c_3=c_3(r_0,n)$, $c_4=c_4(r_0,n)$ and $c_5=c_5(|y|,r_0,R, n)$ such that for $s\geq 3(R+|y|)$ we have
$$
\left|\frac{\nu_r(s\theta-y-z)}{\nu(s\theta)} - e^{\kappa \big(\scalp{\theta}{(y+z)}\big)}\right| 
\leq c_1 \frac{f(|s\theta - y -z|)}{f(s)} + e^{\kappa (|y|+|z|)} \leq c_2 + e^{\kappa (|y|+R)}, \quad |z| \leq R,
$$
and 
\begin{align*}
\left|\frac{\nu^{n*}_r(s\theta - y - w)}{\nu(s\theta) \, [\Psi(1/r)]^{n-1}} - e^{\kappa \big(\scalp{\theta}{(y+w)}\big)} n \left(\frac{\int_{\R^d}e^{\kappa (\scalp{\theta}{z})}\nu_r(z)\, dz}{\Psi(1/r)}\right)^{n-1}\right| & \leq c_3 \frac{f(|s\theta - y -z|)}{f(s)} + c_4 e^{\kappa (|y|+|w|)} \\ & \leq c_5 + c_4 e^{\kappa (|y|+R)}, \quad |w| \leq R.
\end{align*}
Therefore, by bounded convergence, both integrals $I_1(s,r,\theta,y,R)$ and $I_3(s,r,\theta,y,R)$ tend to $0$ as $s \to \infty$, for $r \in (0,r_0]$, $\theta \in E$ and $y \in \R^d$.

To deal with $I_5$, it is enough to observe that by Lemma \ref{lm:useful} (e) and (b) and by the definition of the function $K$ in \textbf{(B)} one has
$$
I_5(s,r,\theta,y,R) \leq \frac{c_6}{\Psi(1/r_0)} \int_{|s\theta-y-z| > R \atop |z| > R} \frac{f(|s\theta-y-z|) f(|z|)}{f(|s\theta-y|)} \frac{f(|s\theta-y|)}{f(|s\theta|)}dz \leq c_7 K(r),
$$
with some $c_6=c_6(r_0,n)$ and $c_7=c_7(r_0,n)$.

Therefore, \eqref{eq:V_est}, all the above observations taken together, Lemma \ref{lm:useful_2} and our basic assumption \textbf{(B)} give
\begin{align*}
\limsup_{s \to \infty} V_{n,r,\theta,y}(s) & \leq e^{\kappa |y|} I_2(r,\theta,R) + e^{\kappa|y|} n (C_0C_{10}/C_1)^{n-1} I_4(r,\theta,R)+ c_7 K(r)
\end{align*}
and, letting $R \to \infty$, finally $\lim_{s \to \infty} V_{n,r,\theta,y}(s) = 0$. Since $r \in (0,r_0]$, $\theta \in E$ and $y \in \R^d$ were choosen arbitrarily, this completes the proof of the part (a). 

\medskip

\noindent
(b) Assume now, moreover, that the convergence in \textbf{(C)} is uniform in $(\theta,y)$ on each rectangle $E \times D$, for every compact set $D \subset \R^d$. We again use a induction on $n$. Observe that similarly as before, for $n=1$ the assertion follows directly from the assumption \textbf{(C)}. Suppose that for some $n \in \N$ the convergence in \eqref{eq:conv_limit_0} holds uniformly in $(r,\theta,y)$ on each cuboid $(0,r_0] \times E \times B(0,\varrho)$, $\varrho >0$. We have to prove that for every $\varrho > 0$ one has $\lim_{s \to \infty} \sup_{(r,\theta,y) \in (0,r_0] \times E \times B(0,\varrho)}V_{n,r,\theta,y}(s) = 0$. Observe that by following the estimates in part (a), we only need to show that
\begin{align} \label{eq:i1andi2}
\lim_{s \to \infty} \sup_{(r,\theta,y) \in (0,r_0] \times E \times B(0,\varrho)} \big( I_1(s,r,\theta,y,R) + I_3(s,r,\theta,y,R)\big) = 0, 
\end{align}
for all $\varrho >0$ and $R>0$. Indeed, if this is true, then similarly as in (a) we get from \eqref{eq:V_est} that
\begin{align*}
\limsup_{s \to \infty} & \left[\sup_{(r,\theta,y) \in (0,r_0] \times E \times B(0,\varrho)} V_{n,r,\theta,y}(s) \right] \\ & \leq e^{\kappa \varrho} \sup_{(r,\theta) \in (0,r_0] \times E} I_2(r,\theta,R) + e^{\kappa \varrho } n (C_0C_{10}/C_1)^{n-1} \sup_{(r,\theta) \in (0,r_0] \times E} I_4(r,\theta,R)+  c_7K(r).
\end{align*}
An application of Lemma \ref{lm:useful_2} and the assumption \textbf{(B)} gives that the members on the right hand side go to zero as $R \to \infty$, which is exactly our claim.

To this end, we will show \eqref{eq:i1andi2}. Fix $\varrho >0$ and let $y \in B(0,\varrho)$.
We have
\begin{align*}
\sup_{(r,\theta,y) \in (0,r_0] \times E \times B(0,\varrho)}  I_1(s,r,\theta,y,R) & \leq \frac{\nu^{n*}_r(B(0,R))}{[\Psi(1/r)]^n} \sup_{(r,\theta,w) \in (0,r_0] \times E \times B(0,\varrho+R)}\left|\frac{\nu_r(s\theta-w)}{\nu(s\theta)} - e^{\kappa (\scalp{\theta}{w})}\right| \\
& \leq C_3^{n} \sup_{(r,\theta,w) \in (0,r_0] \times E \times B(0,\varrho+R)}\left|\frac{\nu_r(s\theta-w)}{\nu(s\theta)} - e^{\kappa (\scalp{\theta}{w})}\right|
\end{align*}
and, similarly,
\begin{align*}
\sup_{(r,\theta,y) \in (0,r_0] \times E \times B(0,\varrho)} & I_3(s,r,\theta,y,R) \\ &  \leq C_3\sup_{(r,\theta,w) \in (0,r_0] \times E \times B(0,\varrho+R)}\left|\frac{\nu^{n*}_r(s\theta - w)}{\nu(s\theta) \, [\Psi(1/r)]^{n-1}} - e^{\kappa (\scalp{\theta}{w})} n \left(\frac{\int_{\R^d}e^{\kappa (\scalp{\theta}{z})}\nu_r(z)\, dz}{\Psi(1/r)}\right)^{n-1}\right|.
\end{align*}
By induction hypothesis both suprema on the right hand side of the above inequalities tend to zero as $s \to \infty$, which completes the proof of \eqref{eq:i1andi2} and the proof of the entire lemma.
\end{proof}

Recall that 
$$
\bar{p}_t(x) := \left[e^{-t |\nu_{r}|} \sum_{n=1}^{\infty} \frac{\nu_{r}^{n \ast}(x)}{[\Psi(1/r)]^n n!}\right]_{r = h(t)}  = e^{-t |\nu_{h(t)}|} \sum_{n=1}^{\infty} \frac{t^n \nu_{h(t)}^{n \ast}(x)}{n!}, \quad x \in \R^d, \ t > 0.
$$
The following result on the spatial asymptotics of the density $\bar{p}_t(x)$ is a consequence of Lemma \ref{lem:conv}.

\begin{lemma} \label{lem:conv_Poiss}
Let the assumptions \textbf{(B)}-\textbf{(C)} hold with some $E \subset \sfera$ and $\kappa \geq 0$. Moreover, let $r_0>0$ be arbitrary and denote $t_0 := 1/\Psi(1/r_0)$.
\begin{itemize}
\item[(a)]
For every $t \in (0,t_0]$, $\theta \in E$ and $y \in \R^d$ one has
\begin{equation}\label{eq:conv_Poisson}
	\lim_{s \to \infty} \frac{\bar{p}_t(s \theta-y)}{t \, \nu(s \theta)} = 
	\exp\left(\kappa (\scalp{\theta}{y}) \right) \, \exp\left( t \int_{|z|>h(t)}\left(e^{\kappa (\scalp{\theta}{z})} - 1\right) \, \nu(z)\, dz\right).
\end{equation} 
\item[(b)] If the convergence in \textbf{(C)} is uniform in $(\theta,y)$ on each rectangle $E \times D$, for every compact set $D \subset \R^d$, then the convergence in \eqref{eq:conv_Poisson} is uniform in $(t,\theta,y)$ on each cuboid $(0,t_0] \times E \times B(0,\varrho)$, $\varrho >0$.
\end{itemize}
\end{lemma}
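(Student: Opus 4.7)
The plan is to expand $\bar p_t$ in its defining Poisson series and reduce the asymptotic to the one for individual convolutions $\nu_{h(t)}^{n*}$ established in Lemma \ref{lem:conv}. To set up, I would first observe that for $t \in (0, t_0]$ one has $h(t) \in (0, r_0]$ and $\Psi(1/h(t)) = 1/t$ (by the definition of $h$ and $t_0$), so the estimates of Section~2 apply uniformly on this range with constants depending only on $r_0$. Dividing the defining series of $\bar p_t$ by $t\,\nu(s\theta)$ yields
\[
\frac{\bar p_t(s\theta - y)}{t\,\nu(s\theta)} = e^{-t|\nu_{h(t)}|}\sum_{n=1}^\infty \frac{t^{n-1}}{n!}\cdot \frac{\nu_{h(t)}^{n*}(s\theta - y)}{\nu(s\theta)}.
\]
Since $\nu_{h(t)}(s\theta) = \nu(s\theta)$ whenever $s > h(t)$, Lemma \ref{lem:conv}(a) applied with $r = h(t)$ gives the pointwise termwise limit $e^{\kappa(\scalp{\theta}{y})}\,n\,M_t(\theta)^{n-1}$, where $M_t(\theta) := \int_{|z|>h(t)} e^{\kappa(\scalp{\theta}{z})}\,\nu(z)\,dz$, finite by Lemma \ref{lm:useful_2}.

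If the limit may be passed inside the sum, the resulting series collapses to $e^{\kappa(\scalp{\theta}{y})} e^{tM_t(\theta)}$; combining with the prefactor $e^{-t|\nu_{h(t)}|}$ and using the identity $M_t(\theta) - |\nu_{h(t)}| = \int_{|z|>h(t)}(e^{\kappa(\scalp{\theta}{z})}-1)\,\nu(z)\,dz$ recovers exactly \eqref{eq:conv_Poisson}.

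The main technical point is to justify this interchange of limit and summation, which I would handle via Tannery's theorem (dominated convergence for series) with an $s$-independent summable majorant. By Lemma \ref{lm:useful}(e), $\nu_{h(t)}^{n*}(s\theta - y) \leq C_{11}^n [\Psi(1/h(t))]^{n-1} f(|s\theta - y|)$ for $|s\theta - y| > 3r_0$; the key cancellation $\Psi(1/h(t)) = 1/t$ eliminates the $t$-dependent powers. Combined with Lemma \ref{lm:useful}(b) (to replace $f(|s\theta-y|)$ by a constant multiple of $f(s)$ for $s$ sufficiently large relative to $|y|$) and with \eqref{eq:lower} (to bound $f(s)/\nu(s\theta) \leq 1/C_1$ on $E$), the $n$-th term is dominated uniformly in $s$ by a constant multiple of $C_{11}^n/n!$, which is summable. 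This proves (a).

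For (b), the same cancellation rewrites the $n$-th term as $\tfrac{1}{n!}\cdot \nu_{h(t)}^{n*}(s\theta-y)/\big([\Psi(1/h(t))]^{n-1}\,\nu(s\theta)\big)$, whose uniform termwise convergence in $(t,\theta,y) \in (0,t_0] \times E \times B(0,\varrho)$ is exactly what Lemma \ref{lem:conv}(b) delivers, since $r = h(t)$ then ranges over $(0,r_0]$. The majorant constructed in (a) is uniform in $t \in (0, t_0]$ and in $(\theta,y) \in E \times B(0,\varrho)$ (for $s$ larger than some threshold depending only on $\varrho$ and $r_0$), so Tannery's theorem in its uniform form upgrades (a) to the claimed uniform convergence on the cuboid.
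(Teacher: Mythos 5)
Your proposal is correct and follows essentially the same route as the paper's proof: both expand $\bar p_t$ in the compound Poisson series, pass to the termwise limit via Lemma \ref{lem:conv}, exploit the identity $t=1/\Psi(1/h(t))$ to make the $n$-th term $t$-free, and then justify the interchange of limit and sum by a dominated-convergence argument with a summable majorant built from Lemma \ref{lm:useful}(e), Lemma \ref{lm:useful}(b), \eqref{eq:lower}, and Lemma \ref{lm:useful_2} (the paper phrases it as dominated convergence for the termwise absolute differences rather than invoking Tannery's theorem by name, but the content is identical). Part (b) is handled the same way in both, replacing the pointwise Lemma \ref{lem:conv}(a) by the uniform version Lemma \ref{lem:conv}(b) and noting the majorant is $(t,\theta,y)$-uniform on the cuboid.
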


\begin{proof} 
(a) For $t \in (0,t_0]$, $\theta \in E$ and $y \in \R^d$ one may write
\begin{align*}
\left|\frac{\bar{p}_t(s \theta-y)}{t \, \nu(s \theta)} \right. & \left. - \exp\left(\kappa (\scalp{\theta}{y}) \right) \, \exp\left( t \int_{|z|>h(t)}\left(e^{\kappa (\scalp{\theta}{z})} - 1\right) \, \nu(z)\, dz\right)\right| \\
& \leq e^{-t |\nu_{h(t)}|} \sum_{n =1}^{\infty} \frac{1}{n!} \underbrace{\left|\frac{\nu^{n*}_{h(t)}(s\theta - y)}{\nu(s\theta) \, [\Psi(1/h(t))]^{n-1}} - e^{\kappa (\scalp{\theta}{y})} n \left(\frac{\int_{\R^d}e^{\kappa (\scalp{\theta}{z})}\nu_{h(t)}(z)\, dz}{\Psi(1/h(t))}\right)^{n-1}\right|}_{=:J(s,t,\theta,y,n)}.
\end{align*}
By Lemma \ref{lem:conv} (a), $\lim_{s \to \infty} J(s,t,\theta,y,n) = 0$, for every $t \in (0,t_0]$, $\theta \in E$, $y \in \R^d$ and $n \in \N$. Moreover, by Lemmas \ref{lm:useful} (e) and \ref{lm:useful_2}, for every $t \in (0,t_0]$, $\theta \in E$, $y \in \R^d$ and $n \in \N$, 
$$
J(s,t,\theta,y,n) \leq (C_0/C_1) C_{11}^n + e^{\kappa |y|} n (C_0 C_{10}/C_1)^{n-1}, 
$$
and, therefore, by dominated convergence, the above series tends to zero as $s \to \infty$ giving (a).

To show (b), fix $\varrho >0$ and observe that
\begin{align*}
\sup_{(t,\theta,y) \in (0,t_0] \times E \times B(0,\varrho)} \left|\frac{\bar{p}_t(s \theta-y)}{t \, \nu(s \theta)} \right. & \left. - \exp\left(\kappa (\scalp{\theta}{y}) \right) \, \exp\left( t \int_{|z|>h(t)}\left(e^{\kappa (\scalp{\theta}{z})} - 1\right) \, \nu(z)\, dz\right)\right| \\
& \leq \sum_{n =1}^{\infty} \frac{1}{n!} \sup_{(t,\theta,y) \in (0,t_0] \times E \times B(0,\varrho)} J(s,t,\theta,y,n).
\end{align*}
Since, for every $n \in \N$, 
$$
\sup_{(t,\theta,y) \in (0,t_0] \times E \times B(0,\varrho)} J(s,t,\theta,y,n) \leq (C_0/C_1) C_{11}^n + e^{\kappa \varrho} n (C_0 C_{10}/C_1)^{n-1}
$$
and 
\begin{align*}
& \sup_{(t,\theta,y) \in (0,t_0] \times E \times B(0,\varrho)} J(s,t,\theta,y,n)  \\ & \ \ \ \ \  \leq \sup_{(r,\theta,y) \in (0,r_0] \times E \times B(0,\varrho)} \left|\frac{\nu^{n*}_{r}(s\theta - y)}{\nu(s\theta) \, [\Psi(1/r)]^{n-1}} - e^{\kappa (\scalp{\theta}{y})} n \left(\frac{\int_{\R^d}e^{\kappa (\scalp{\theta}{z})}\nu_{r}(z)\, dz}{\Psi(1/r)}\right)^{n-1}\right| \to 0
\end{align*}
as $s \to \infty$ (by Lemma \ref{lem:conv} (b)), also the claim of the part (b) holds true. 
\end{proof}

\bigskip

\section{Proof of the main result}

\begin{proof}[Proof of Theorem \ref{thm:main}]
Let the assumptions \textbf{(A)} and \textbf{(B)} hold and let $b \in \R^d$ be arbitrary. 
Let $r_0 >0$ be so large that $t_0:= 1/\Psi(1/r_0) \geq \sup T$, where $T$ is the subset of $\R$ given in assumption \textbf{(D)}, and let $E \subset \sfera$ and $\kappa \geq 0$ be the subset of a unit sphere and the number appearing in assumption \textbf{(C)}. Moreover, let 
$R_0 \geq r_0$ be the radius provided by Lemma \ref{lem:lambda_est} (b).

\vspace{0.1cm}

\noindent
(a) We will show that for every $t \in T$, $\theta \in E$ and $y \in \R^d$,
\begin{align} \label{eq:res_conv_initial}
\lim_{r \to \infty} \frac{p_t(r\theta -y + tb_{h(t)})}{t \, \nu(r\theta)} =  e^{\kappa (\scalp{\theta}{y})} \exp\left(t\int_{|z|>h(t)} \left( e^{\kappa (\scalp{\theta}{z})} - 1 \right)\nu(z) dz\right) \, \int_{\R^d} e^{\kappa (\scalp{\theta}{z})} \lambda_t(dz),
\end{align}
where 
$$
\lambda_t(dz) = \left\{\begin{array}{ll}
\widetilde p_t(z)dz &\mbox{if } A \equiv 0, \\
\widetilde p_t \ast g_t (z)dz & \mbox{otherwise} \, .
\end{array}\right.
$$
If this is true, then by substituting $y = w + t b_{h(t)}$ (recall that $b_{r}$ is defined in \eqref{eq:def_br}) the right hand side of 
\eqref{eq:res_conv_initial} becomes $e^{-t \widetilde \psi(\kappa \theta) + \kappa (\scalp{\theta}{w})}$ with $\widetilde \psi$ given by \eqref{eq:hyp_exp} and we get exactly the assertion (a) of the theorem. Indeed, $\int_{\R^d} e^{\kappa (\scalp{\theta}{z})} \lambda_t(dz)$ is a multidimensional exponential moment of order $\kappa \theta$ of the measure $\lambda_t(dz)$ and, according to \eqref{eq:hyp_exp}-\eqref{eq:hyp_exp_1}, we have
$$
\int_{\R^d} e^{\kappa (\scalp{\theta}{z})} \lambda_t(dz) =  \exp\left(t\int_{\R^d \setminus \left\{0\right\}} 
		\left(e^{\kappa (\scalp{\theta}{y})}-1-\kappa(\scalp{\theta}{y}) \right) \nuzero_{h(t)}(y)\, dy\right), \quad t>0,
$$
when $A \equiv 0$, and 
$$
\int_{\R^d} e^{\kappa (\scalp{\theta}{z})} \lambda_t(dz) = \exp\left(t(\scalp{\xi}{A \xi}) + t \int_{\R^d \setminus \left\{0\right\}} 
		\left(e^{\kappa (\scalp{\theta}{y})}-1-\kappa(\scalp{\theta}{y}) \right) \nuzero_{h(t)}(y)\, dy\right), \quad t>0,
$$
otherwise.

Denote for shorthand
$$
W(s,t,\theta,y) := \left|\frac{p_t(s \theta - y + tb_{h(t)})}{t \, \nu(s \theta)} - e^{\kappa (\scalp{\theta}{y})} \exp\left(t\int_{|z|>h(t)} \left( e^{\kappa (\scalp{\theta}{z})} - 1 \right)\nu(z) dz\right) \, \int_{\R^d} e^{\kappa (\scalp{\theta}{z})} \lambda_t(dz)\right|.
$$
With this notation our goal is to show that $\lim_{s \to \infty} W_{T}(s,t,\theta,y) = 0$, for $t \in T$, $\theta \in E$ and $y \in \R^d$. 

Fix $t \in T$, $\theta \in E$, $y \in \R^d$. Recall that by \eqref{eq:basic_conv_1}-\eqref{eq:basic_conv_2}, for every $R>3r_0 \vee R_0$ and $s> 2R + |y|$, we may write
\begin{align*}
\frac{p_t(s \theta - y + tb_{h(t)})}{t \, \nu(s \theta)} & = \frac{e^{-t|\nu_{h(t)}|} \lambda_t(s \theta - y)}{t \, \nu(s\theta)} + \frac{\lambda_t \ast \bar{p}_t (s \theta - y)}{t \, \nu(s \theta)} \\ 
& = \frac{e^{-t|\nu_{h(t)}|} \lambda_t(s \theta - y)}{t \, \nu(s\theta)} \\ & \ \ \ \ +
\left(\int_{|z| \leq R} + \int_{|s\theta -y - z| \leq R} + \int_{|s \theta - y - z| > R \atop |z| > R}\right) \frac{\bar{p}_t (s \theta - y - z)}{t \, \nu(s \theta)} \lambda_t(dz).
\end{align*}
With this, we have
\begin{align*}
& \frac{p_t(s \theta - y + tb_{h(t)})}{t \, \nu(s \theta)} - e^{\kappa (\scalp{\theta}{y})} \exp\left(t\int_{|z|>h(t)} \left( e^{\kappa (\scalp{\theta}{z})} - 1 \right)\nu(z) dz\right) \, \int_{\R^d} e^{\kappa (\scalp{\theta}{z})} \lambda_t(dz)  \\ 
& = \frac{e^{-t|\nu_{h(t)}|} \lambda_t(s \theta-y)}{t \, \nu(s\theta)} + \frac{\lambda_t \ast \bar{p}_t (s \theta-y)}{t \, \nu(s \theta)} \\ 
& = \frac{e^{-t|\nu_{h(t)}|} \lambda_t(s \theta-y)}{t \, \nu(s\theta)} +
\int_{|s\theta - y-z| \leq R} \frac{\bar{p}_t (s \theta - y-z)}{t \, \nu(s \theta)} \lambda_t(z) dz + \int_{|s \theta - y-z| > R \atop |z| > R} \frac{\bar{p}_t (s \theta - y-z)}{t \, \nu(s \theta)} \lambda_t(z) dz \\
& \ \ \ + \int_{|z| \leq R} \left(\frac{\bar{p}_t (s \theta - y-z)}{t \, \nu(s \theta)} -  e^{\kappa (\scalp{\theta}{(y+z)})} \, \exp\left( t \int_{|w|>h(t)}\left(e^{\kappa (\scalp{\theta}{w})} - 1\right) \, \nu(w)\, dw\right)\right) \lambda_t(z) dz \\
& \ \ \ + e^{\kappa (\scalp{\theta}{y})}\int_{|z|>R} e^{\kappa (\scalp{\theta}{z})} \, \exp\left(t\int_{|w|>h(t)} \left( e^{\kappa (\scalp{\theta}{w})} - 1 \right)\nu(w) dw\right) \, \lambda_t(z) dz,
\end{align*}
which leads to the inequality
\begin{align*}
& \left|\frac{p_t(s \theta - y + tb_{h(t)})}{t \, \nu(s \theta)} - e^{\kappa (\scalp{\theta}{y})} \exp\left(t\int_{|z|>h(t)} \left( e^{\kappa (\scalp{\theta}{z})} - 1 \right)\nu(z) dz\right) \, \int_{\R^d} e^{\kappa (\scalp{\theta}{z})} \lambda_t(dz)\right| \\ 
& \leq \underbrace{\frac{e^{-t|\nu_{h(t)}|} \lambda_t(s \theta-y)}{t \, \nu(s\theta)}}_{=:I_1(s,t,\theta,y)} +
\underbrace{\int_{|s\theta - y-z| \leq R} \frac{\bar{p}_t (s \theta - y-z)}{t \, \nu(s \theta)} \lambda_t(z) dz}_{=:I_2(s,t,\theta,y,R)} + \underbrace{\int_{|s \theta - y-z| > R \atop |z| > R} \frac{\bar{p}_t (s \theta - y-z)}{t \, \nu(s \theta)} \lambda_t(z) dz}_{=:I_3(s,t,\theta,y,R)} \\
& \ \ \ + \underbrace{\int_{|z| \leq R} \left|\frac{\bar{p}_t (s \theta - y-z)}{t \, \nu(s \theta)} -  e^{\kappa (\scalp{\theta}{(y+z)})} \, \exp\left( t \int_{|w|>h(t)}\left(e^{\kappa (\scalp{\theta}{w})} - 1\right) \, \nu(w)\, dw\right)\right| \lambda_t(z) dz}_{=:I_4(s,t,\theta,y,R)} \\
& \ \ \ + e^{\kappa (\scalp{\theta}{y})} \, \underbrace{\exp\left(t\int_{|w|>h(t)} \left( e^{\kappa (\scalp{\theta}{w})} - 1 \right)\nu(w) dw\right) \, \int_{|z|>R} e^{\kappa (\scalp{\theta}{z})} \, \lambda_t(z) dz}_{=:I_5(t,\theta,R)},
\end{align*}
that is,
\begin{align} \label{eq:W_est}
W(s,t,\theta,y) \leq I_1(s,t,\theta,y) + I_2(s,t,\theta,y,R) + I_3(s,t,\theta,y,R) + I_4(s,t,\theta,y,R) + I_5(t,\theta,R) e^{\kappa |y|}
\end{align}
in short.

Let now $\varrho >0$ be an arbitrary positive number such that $|y| \leq \varrho$. We first estimate $I_1$ and $I_2$. By Lemma \ref{lem:lambda_est} (b) and Lemma \ref{lm:useful} (b) we have
$$
I_1(s,t,\theta,y) \leq c_1 \frac{f(s-|y|)}{f(s)} e^{-c_2 (s-|y|) \log(1+ c_2 (s-|y|))} \leq c_3  e^{-c_2 (s-\varrho) \log(1+ c_2 (s-\varrho))},
$$
and
\begin{align*}
I_2(s,t,\theta,y,R) = \int_{|z| \leq R} \bar{p}_t (z) \frac{\lambda_t(s\theta - y-z)}{t \, \nu(s \theta)}  dz
                    & \leq c_1 \frac{f(s-|y|-R)}{f(s)} e^{-c_2 (s-|y|-R) \log(1+ c_2 (s-|y|-R))} \\
										& \leq c_3 e^{-c_2 (s-\varrho-R) \log(1+ c_2 (s-\varrho-R))},
\end{align*}
with some $c_1=c_1(T,R)$, $c_2=c_2(r_0,T)$ and $c_3=c_3(\varrho,R)$, uniformly in  $s > 3(R + \varrho)$. 

To deal with $I_3$, we observe that by Lemmas \ref{lem:lambda_est} (b) and Lemma \ref{lm:useful} (b), (f), we get
\begin{align*}
I_3(s,t,\theta,y, R) & = \int_{|s \theta - y-z| > R \atop |z| > R} \frac{\bar{p}_t (s \theta - y-z)}{t \, \nu(s \theta)} \lambda_t(z) dz \\
              & \leq c_4 t_0 \frac{f(s-|y|)}{f(s)} \int_{|s \theta - y-z| > R \atop |z| > R} \frac{f(|s \theta - y-z|) f(|z|)}{f(|s \theta - y|)} dz \leq c_5 K(r),
\end{align*}
with $c_4 = c_4(r_0,T)$ and $c_5 = c_5(r_0,T,\varrho)$, uniformly in $s> 3(R + \varrho)$. 

Also, by Lemma \ref{lem:conv_Poiss} (a) the integrand under $I_4$ tends to zero as $s \to 0$. Moreover, since $t = 1/\Psi(1/h(t))$, Lemma \ref{lm:useful_2} yields
\begin{align} \label{eq:from_Lem2}
\exp\left(t\int_{|w|>h(t)} \left( e^{\kappa (\scalp{\theta}{w})} - 1 \right)\nu(w) dw \right)\leq e^{C_0C_{10}/C_1}. 
\end{align}
This together with Lemma \ref{lm:useful} (b), (f) implies that
$$
\left|\frac{\bar{p}_t (s \theta - y-z)}{t \, \nu(s \theta)} -  e^{\kappa (\scalp{\theta}{(y+z)})} \, \exp\left( t \int_{|w|>h(t)}\left(e^{\kappa (\scalp{\theta}{w})} - 1\right) \, \nu(w)\, dw\right)\right| 
\leq c_6 + e^{C_0 C_{10}/C_1+ \kappa(R+|y|)},
$$
for $s \geq 3r_0 + R + |y|$ , with $c_6=c_6(r_0,T,\varrho)$. Thus $\lim_{s \to \infty} I_4(s,t,\theta,y,R) = 0$, by bounded convergence. 

One more use of Lemma \ref{lem:lambda_est} (b) and \eqref{eq:from_Lem2} also gives that there exists $c_7=c_7(r_0,T)$ and $c_8=c_8(r_0,T)$ such that
$$
I_5(t,\theta, R) \leq c_7 t_0 e^{C_{10}} \int_{|z|>R} e^{-c_8 \big(\log(1+c_8 R) - \frac{\kappa}{c_8}\big)|z|} f(|z|) dz
$$

Collecting all the above observations, we get
\begin{align} \label{eq:W_est_2}
\limsup_{s \to \infty} W(s,t,\theta,y) \leq c_5 K(r) + I_5(t,\theta,R) e^{\kappa |y|},
\end{align}
and by taking the limit $R \to \infty$ we obtain that $\lim_{s \to \infty} W(s,t,\theta,y) = 0$, for every $t \in T$, $\theta \in E$ and $y \in \R^d$. This completes the proof of the part (a). 

\vspace{0.1cm}

\noindent
(b) Suppose now that the convergence in \textbf{(C)} is uniform in $(\theta,y)$ on each rectangle $E \times D$, for every compact set $D \subset \R^d$. Since, by \eqref{eq:def_br} and \eqref{eq:Psi_eigenschaften}, for every $t \in T \subset (0,t_0]$, one has $|t b_{h(t)}| \leq c_8$ with $c_8=c_8(r_0,|b|,|\nu_1|)$, as before, it is sufficient to prove that \eqref{eq:res_conv_initial} holds uniformly in $(t,\theta,y)$ on each cuboid $T \times E \times B(0,\varrho) $, $\varrho >0$.

Fix $\varrho >0$ and observe that by the estimates established in part (a) we have
\begin{align*} 
\sup_{(t,\theta,y) \in \ T \times E\times B(0,\varrho)} W(s,t,\theta,y) & \leq c_1 e^{-c_2 (s-\varrho) \log(1+ c_2 (s-\varrho))} + c_3 e^{-c_2 (s-\varrho-R) \log(1+ c_2 (s-\varrho-R))} \\ & + c_5 K(r) + \sup_{(t,\theta,y) \in \ T \times E \times B(0,\varrho)} I_4(s,t,\theta,y,R) + e^{\kappa \varrho}\sup_{(t,\theta) \in T \times E} I_5(t,\theta,R) ,
\end{align*}
for sufficiently large $R>0$ and $s>0$, with constants $c_1$, $c_2$, $c_3$ and $c_5$ uniform in $s> 3(R + \varrho)$ ($c_5$ also uniform in $R>3r_0 \vee R_0$). Since
\begin{align*}
& \sup_{(t,\theta,y) \in \ T \times E \times B(0,\varrho)} I_4(s,t,\theta,y,R) \\ & \ \ \ \ \ \ \ \leq \sup_{(t,\theta,z) \in \ T \times E \times B(0,\varrho+R)} \left|\frac{\bar{p}_t (s \theta - z)}{t \, \nu(s \theta)} -  e^{\kappa (\scalp{\theta}{(z)})} \, \exp\left( t \int_{|w|>h(t)}\left(e^{\kappa (\scalp{\theta}{w})} - 1\right) \, \nu(w)\, dw\right)\right|,
\end{align*}
Lemma \ref{lem:conv_Poiss} (b) yields 
$$
\limsup_{s \to \infty} \sup_{(t,\theta,y) \in \ T \times E \times B(0,\varrho)} W(s,t,\theta,y)  \leq c_5 K(r) + c_7 t_0 e^{C_{10}+\kappa \varrho} \int_{|z|>R} e^{-c_8 \big(\log(1+c_8 R) - \frac{\kappa}{c_8}\big)|z|} f(|z|) dz.
$$
We conclude by taking the limit $R \to \infty$ on both sides of the above inequality. 
\end{proof}

We now discuss the possible converse implications between the convergence \eqref{eq:res_conv} of Theorem \ref{thm:main} and our key conditions  
\eqref{ass:sjp} and \eqref{eq:add_conv}. 

\begin{prop}\label{prop:converse}
Let $\big\{P_t: t \geq 0 \big\}$ be a semigroup of probability measures determined by \eqref{eq:Phi_2} such that the densities $p_t$ exist. Then we have the following.
\begin{itemize}
\item[(a)] If there exist $E \subset \sfera$, $\kappa >0$ and $t_0 > 0$ such that for every $\theta \in E$ the measures $P_t$ has mutlidimensional exponential moments of order $\kappa \theta$ and for every $\theta \in E$ the convergence 
\begin{align} \label{eq:lim_1}
  \lim_{r \to \infty} \frac{p_t(r\theta -y)}{t \, \nu(r\theta)} = e^{-t \widetilde \psi(\kappa \theta) + \kappa (\scalp{\theta}{y})} 
\end{align}
holds uniformly in $t$ on $(0,t_0)$ and locally uniformly in $y \in \R^d$ (cf. \eqref{eq:res_conv}), then 
\begin{align} \label{eq:lim_2}
  \lim_{r \to \infty} \frac{\nu(r\theta -y)}{\nu(r\theta)} = e^{\kappa (\scalp{\theta}{y})},
\end{align}
for every $\theta \in E$ and almost every $y \in \R^d$ (cf. \eqref{eq:add_conv}). If, in adition, the convergence in \eqref{eq:lim_1} is also uniform in $\theta \in E$, then the same is true for \eqref{eq:lim_2}.

\item[(b)] If there exist $E \subset \sfera$ and $t_0 > 0$ such that for every $\theta \in E$ the convergence 
\begin{align} \label{eq:lim_3}
  \lim_{r \to \infty} \frac{p_t(r\theta -y)}{t \, \nu(r\theta)} = 1 
\end{align}
holds uniformly in $t$ on $(0,t_0)$ and locally uniformly in $y \in \R^d$, then 
\begin{align} \label{eq:lim_4}
  \lim_{r \to \infty} \frac{\nu(r\theta -y)}{\nu(r\theta)} = 1,
\end{align}
for every $\theta \in E$ and almost every $y \in \R^d$. If, in adition, the convergence in \eqref{eq:lim_3} is also uniform in $\theta \in E$, then the same is true for \eqref{eq:lim_4}.

\item[(c)] If there exists a nonincreasing function $f:(0,\infty) \to (0, \infty)$ such that $\nu(x) \asymp f(|x|)$, $x \in \R^d \setminus \left\{0\right\}$, $t_0 > 0$, and the functions $\eta_1, \eta_2 :\sfera \to (0,\infty)$ such that $ 0 < \inf_{\theta \in \sfera} \eta_1(\theta) \leq \sup_{\theta \in \sfera} \eta_1(\theta) < \infty$, $ 0 < \inf_{\theta \in \sfera} \eta_2(\theta) \leq \sup_{\theta \in \sfera} \eta_2(\theta) < \infty$ and 
\begin{align} \label{eq:lim_5}
  \lim_{r \to \infty} \frac{p_t(r \theta)}{t \, \nu(r \theta)} = \left\{
  \begin{array}{ccc}
   \eta_1(\theta) & \mbox{  if  } & t=t_0,\\
   \eta_2(\theta) & \mbox{  if  } & t=2t_0,
  \end{array}\right.
\end{align}
uniformly in $\theta$ on $\sfera$, then we have $K(r) < \infty$, for every $r \geq 1$. 
\end{itemize}
\end{prop}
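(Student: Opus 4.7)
The plan is to treat (a) and (b) via the short-time asymptotic of the semigroup, and to derive (c) from the Chapman--Kolmogorov identity applied at the two times $t_0$ and $2t_0$.

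For (a) and (b), the key input is the classical short-time limit
\[
  \lim_{t \to 0^+} \frac{p_t(x)}{t} = \nu(x) \qquad \text{for almost every } x \in \Rdwithouto,
\]
which follows from $\Fourier\bigl((P_t-\delta_0)/t\bigr)(\xi) = (e^{-t\psi(\xi)}-1)/t \to -\psi(\xi)$ and the L\'evy--Khintchine formula (see \cite{Sato}). With this in hand, the argument is essentially an exchange of limits. Writing $F_r(t) := p_t(r\theta - y)/(t\,\nu(r\theta))$, the hypothesis says $F_r(t) \to e^{-t\widetilde\psi(\kappa\theta)+\kappa(\scalp{\theta}{y})}$ as $r \to \infty$, uniformly in $t \in (0,t_0)$. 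Because the convergence is uniform in $t$, the standard double-limit lemma gives
\[
  \lim_{r\to\infty}\frac{\nu(r\theta-y)}{\nu(r\theta)} \;=\; \lim_{r\to\infty}\lim_{t\to 0^+} F_r(t) \;=\; \lim_{t\to 0^+} e^{-t\widetilde\psi(\kappa\theta) + \kappa(\scalp{\theta}{y})} \;=\; e^{\kappa(\scalp{\theta}{y})},
\]
for almost every $y \in \R^d$ (namely those for which the short-time limit holds at $r\theta - y$ for sufficiently large $r$). This proves (a), and (b) is the case $\kappa = 0$. The uniformity-in-$\theta$ conclusion follows because each step above preserves uniformity.

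For (c), I would exploit the semigroup identity $p_{2t_0} = p_{t_0} \ast p_{t_0}$ together with the two-sided comparability $p_{t_0}, p_{2t_0} \asymp f$ at infinity provided by the hypothesis. Since $\eta_1, \eta_2$ are bounded away from $0$ and $\infty$ and $\nu \asymp f$, there exist constants $c, C > 0$ and $R_0 \geq 1$ with
\[
  c\, f(|x|) \leq p_{t_0}(x), \qquad p_{2t_0}(x) \leq C\, f(|x|), \qquad |x| > R_0.
\]
Using continuity and strict positivity of $p_{t_0}$ (a standard regularity property of L\'evy densities, compare \eqref{eq:small_jumps_est}), we have $\inf\{p_{t_0}(y) : 1 \leq |y| \leq R_0\} > 0$; combining with $f(|y|) \leq f(1)$ on that annulus extends the lower bound to $p_{t_0}(y) \geq c'\,f(|y|)$ for every $|y| > 1$. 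Chapman--Kolmogorov then yields
\[
  c'^{\,2}\int_{|y|>1,\,|x-y|>1} f(|y|)\,f(|x-y|)\,dy \;\leq\; p_{2t_0}(x) \;\leq\; C\, f(|x|) \qquad (|x| > R_0),
\]
so the quotient defining $K(1)$ is bounded for $|x| > R_0$. The supremum over bounded $1 < |x| \leq R_0$ is finite by local integrability of $f$ on $\{|y| > 1\}$ (since $\nu \asymp f$ and the L\'evy measure is finite outside any neighbourhood of the origin). Hence $K(1) < \infty$, and the monotonicity of $K$ in $r$ yields $K(r) \leq K(1) < \infty$ for every $r \geq 1$.

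The main technical obstacle in (a)--(b) is justifying the pointwise a.e.\ short-time limit $p_t/t \to \nu$, which is classical but requires invoking Fourier inversion together with continuity of densities. In (c), the analogous obstacle is the pointwise lower bound on $p_{t_0}$ inside the annulus $\{1 \leq |y| \leq R_0\}$, where the asymptotic regime has not yet taken effect; here one must appeal to strict positivity and continuity of $p_{t_0}$, not to the quantitative asymptotic.
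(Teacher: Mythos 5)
Your argument for (a)--(b) hinges on the claim that $p_t(x)/t \to \nu(x)$ \emph{pointwise a.e.}\ on $\Rdwithouto$ as $t\to 0^+$, which you assert follows from the Fourier convergence $(e^{-t\psi}-1)/t \to -\psi$. This is the crux, and it is not justified: the Fourier argument gives vague convergence of the measures $(P_t-\delta_0)/t$ to the generating measure (hence $P_t/t \to \nu$ vaguely on $\Rdwithouto$), but vague convergence of measures does not yield a.e.\ pointwise convergence of densities without extra work. The paper is careful about exactly this: it uses only the vague convergence together with the Portmanteau theorem (to turn it into convergence of integrals over small balls $B(y,\varepsilon)$ at positive distance from the origin), then integrates the uniform bound from \eqref{eq:lim_1} over such a ball and applies Lebesgue differentiation in $\varepsilon \to 0^+$. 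This is precisely where the hypothesis of \emph{local uniformity in $y$} is consumed; your exchange-of-limits argument, which never averages over a ball, never actually uses it. Tellingly, the paper remarks after the proof that \emph{if} one knew $p_t(x)/t\to\nu(x)$ pointwise, then only pointwise convergence in $y$ in \eqref{eq:lim_1} would be needed — which shows the authors regard this pointwise statement as an additional, nontrivial hypothesis rather than a freebie.

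For (c), you rely on ``continuity and strict positivity of $p_{t_0}$'' to get a lower bound $p_{t_0}(y)\geq c'f(|y|)$ on the annulus $1\leq |y| \leq R_0$. Strict positivity everywhere is not a standard regularity property of L\'evy densities (it fails e.g.\ for one-sided processes), and the upper estimate \eqref{eq:small_jumps_est} you cite gives no lower bound. The paper sidesteps this entirely: from the hypothesis one gets $p_{t_0}(x)\geq c_1 f(|x|)$ and $p_{2t_0}(x)\leq c_2 f(|x|)$ only for $|x|\geq R$, and Chapman--Kolmogorov restricted to $\{|x-y|>R,\,|y|>R\}$ already gives $K(R)<\infty$ directly. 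Extending $K(R)<\infty$ to $K(r)<\infty$ for $1\leq r< R$ is then done via strict positivity and monotonicity of the \emph{profile} $f$ (which is part of the hypothesis), not via any lower bound on $p_{t_0}$ inside the bounded annulus. So the paper's argument for (c) needs neither continuity nor positivity of $p_{t_0}$ at finite range; yours does, and that assumption is not available.
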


\begin{proof}
We only prove the assertion (a) and (c). The proof of (b) is just a simpler version of that of (a).

\vspace{0.1cm}

\noindent
(a) First recall that $\lim_{t \to 0^{+}} \frac{p_t(x)}{t} = \nu(x)$ vaguely on $\R^d \setminus \left\{0\right\}$. By the Portmanteau theorem, however, this implies that 
\begin{align} \label{eq:portmanteau}
\lim_{t \to 0^{+}} \int_{B(y, \varepsilon)} \frac{p_t(x)}{t} dx = \int_{B(y, \varepsilon)} \nu(x) dx, \quad \mbox{for every $\varepsilon >0$ and $y \in \R^d$ such that $|y| > \varepsilon$. }
\end{align}
Also, since the convergence in \eqref{eq:lim_1} holds uniformly in $t$ on $(0,t_0)$ and locally uniformly in $y \in \R^d$, we get that for every $y \in \R^d$, $\theta \in E$ and $\delta >0$, there exists $\varepsilon_0 >0$ and $R >|y|+\varepsilon_0$ such that for every $t \in (0,t_0)$, $z \in B(y,\varepsilon_0)$ and $r \geq R$ we have
$$
\left|\frac{p_t(r\theta - z)}{t\nu(r\theta)} - e^{-t \widetilde \psi(\kappa \theta) + \kappa (\theta \cdot z)}\right| \leq \delta.
$$
In particular, for every $\varepsilon \in (0,\varepsilon_0)$, $t \in (0,t_0)$ and $r \geq R$, 
\begin{align*}
\left(-\delta |B(y,\varepsilon)| + e^{-t \widetilde \psi(\kappa \theta)} \int_{B(y,\varepsilon)}e^{ \kappa (\theta \cdot z)} dz\right) & \nu(r\theta) \leq \int_{B(y,\varepsilon)}\frac{p_t(r\theta - z)}{t} dz \\ & \leq \left(\delta |B(y,\varepsilon)| + e^{-t \widetilde \psi(\kappa \theta)} \int_{B(y,\varepsilon)}e^{ \kappa (\theta \cdot z)} dz\right) \nu(r\theta).
\end{align*}
By simple change of variables, the middle integral takes the form $\int_{B(r\theta-y,\varepsilon)}\frac{p_t(z)}{t} dz$. 
So by taking the limit $t \to 0^{+}$, thanks to \eqref{eq:portmanteau} we get 
\begin{align*}
\left(-\delta |B(y,\varepsilon)| + \int_{B(y,\varepsilon)}e^{ \kappa (\theta \cdot z)} dz\right) & \nu(r\theta) \leq \int_{B(r\theta-y,\varepsilon)}\nu(z) dz \\ & \leq \left(\delta |B(y,\varepsilon)| + \int_{B(y,\varepsilon)}e^{ \kappa (\theta \cdot z)} dz\right) \nu(r\theta).
\end{align*}
Now, by changing variables one more time, by dividing all members of this chain of inequalities by $|B(y,\varepsilon)|$ and by taking the limit $\varepsilon \to 0^{+}$, we finally get
$$
\left(-\delta + e^{\kappa (\theta \cdot y)}\right) \nu(r\theta) \leq \nu(r\theta-y) \leq \left(\delta + e^{\kappa (\theta \cdot y)}\right) \nu(r\theta), 
$$
for almost all $y \in \R^d$, with $\delta$ depending on $y$. This clearly gives that the limit in \eqref{eq:lim_2} holds for every $\theta \in E$ and almost every $y \in \R^d$. By inspection of the above argument, we also see that the uniform convergence in $\theta$ on $E$ in \eqref{eq:lim_1} implies the same for \eqref{eq:lim_2}. This completes the proof of (a).

\vspace{0.1cm}

\noindent
(c) Observe that by \eqref{eq:lim_5} there exist the constants $0< c_1, c_2 < \infty$ and $R >0$ such that for every $|x| \geq R$ we have $p_{t_0}(x) \geq c_1 f(|x|)$ and $p_{2t_0}(x) \leq c_2 f(|x|)$. Thus, by the semigroup property, we get
\begin{align*}
c_2 f(|x|) \geq p_{2t_0}(x) & = \int_{\R^d} p_{t_0}(x-y) p_{t_0}(y) dy \geq \int_{|x-y|> R \atop |y|>R} p_{t_0}(x-y) p_{t_0}(y) dy \\
                            & \geq c_1^2 \int_{|x-y|> R \atop |y|>R} f(|x-y|) f(|y|) dy, 
\end{align*}
which immediately implies that $K(R) < \infty$. Hence, by strict positivity and monotonicity properties of the profile $f$, 
from this we can also derive that $K(r) < \infty$, $r \geq 1$. 
\end{proof}
From the above proof we see that if we know that the limit $\lim_{t \to 0^{+}} \frac{p_t(x)}{t} = \nu(x)$ is pointwise on $\R^d \setminus \left\{0\right\}$, then in both parts (a) and (b) of the above proposition it is enough to assume that the convergence in \eqref{eq:lim_1} and \eqref{eq:lim_3} is only pointwise in $y \in \R^d$.

\bigskip

\section{The case of compound Poisson convolution semigroups}

In this section we give the proof of Theorem \ref{thm:main_second} which is devoted to finite L\'evy measures. 
The argument is a modification of that from the previous sections. First recall that if $\nu(\R^d \setminus \left\{0\right\}) < \infty$, then $\Psi(\infty) < \infty$. This together with Fatou's lemma leads to the following two direct corollaries from Lemmas \ref{lm:useful} and \ref{lm:useful_2}. Recall also that
$$
\widetilde{p}_t(x) :=  e^{-t|\nu|} \sum_{n=1}^\infty \frac{t^n\nu^{n*}(x)}{n!}, \quad  t>0,
$$
stand for the densities of the absolutly continuous components of the measures $\widetilde P_t$.

\begin{corollary}\label{cor:useful} 
Let $\nu(\R^d \setminus \left\{0\right\}) < \infty$ and let the assumption \textbf{(B)} holds. Then for every fixed $r_0>0$ we have the following. 
\begin{itemize}
\item[(a)]
There are constants $C_4=C_4(r_0)$ and $C_5=C_5(r_0)$ such that
\begin{align*} 
\int_{|x -y| > r_0 } f(|x-y|) \nu(y) dy \leq C_4 \Psi(\infty) f(|x|), \quad |x| \geq 2r_0.
\end{align*}
\item[(b)]
There exists a constant $C_6=C_6(r_0) \geq 1$ such that
$$
f(s-r_0) \leq C_6 f(s), \quad s \geq 3r_0.
$$
\item[(c)] For every numbers $C_7, C_8 >0$ there exists a constant $C_9:=C_9(r_0)>0$ such that
$$
e^{-C_7 s \log(1+ C_8 s)} \leq C_9 f(s), \quad s \geq r_0.
$$ 
\item[(d)] There is a constant $C_{10}=C_{10}(r_0)$ such that
\begin{align*}
\int_{|x-y|>r_0} f(|y-x|) \nu^{\ast n}(y)\, dy \leq \left(C_{10} \Psi(\infty)\right)^n f(|x|) ,\quad |x|\geq 3r_0, \ \ n \in \N.
\end{align*}
\item[(e)] There exists $C_{11}=C_{11}(r_0)$ such that for every $n \in \N$ we have
  \begin{align*}
    \nu^{n*}(x) \leq C_{11}^n \left[\Psi(\infty)\right]^{n-1} f(|x|), \quad |x|>3r_0.
  \end{align*}
\item[(f)] There exists $C_{12}=C_{12}(r_0)$ such that we have
  \begin{align*}
    \widetilde{p}_t(x) \leq C_{12} \, t \, f(|x|), \quad |x|>3r_0, \ t \in (0,t_0], 
  \end{align*}
	with $t_0:= 1/\Psi(1/r_0)$.
\end{itemize}
\end{corollary}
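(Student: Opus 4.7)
The plan is to derive Corollary \ref{cor:useful} as a limiting case of Lemma \ref{lm:useful}, sending $r\to 0^+$ and exploiting that the finiteness $|\nu|<\infty$ forces $\Psi(\infty)<\infty$.

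Parts (b) and (c) are identical to Lemma \ref{lm:useful}(b), (c): they depend only on the profile $f$ and on the condition \eqref{ass:sjp} (equivalently, $K(1)<\infty$), not on whether $\nu$ is finite or infinite, so they carry over unchanged.

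For parts (a), (d), (e), I would fix $r_0>0$, apply Lemma \ref{lm:useful}(a), (d), (e) for all $r\in(0,r_0]$, and pass to the limit $r\to 0^+$. The crucial monotone convergence step is that $\nu_r(y)=\1_{|y|>r}\,\nu(y)\nearrow \nu(y)$ pointwise as $r\to 0^+$, which by induction on $n$ and monotone convergence yields $\nu_r^{n*}(x)\nearrow \nu^{n*}(x)$ for every $x\in\R^d$. Consequently, the integrals on the left-hand sides in (a) and (d) converge monotonically to the desired unrestricted integrals, and the pointwise bound in (e) passes to the limit. On the right-hand sides, continuity and monotonicity of $\Psi$ together with $|\nu|<\infty$ give $\Psi(1/r)\nearrow \Psi(\infty)<\infty$, so the constants tend to $C_4\Psi(\infty)f(|x|)$, $(C_{10}\Psi(\infty))^n f(|x|)$ and $C_{11}^n[\Psi(\infty)]^{n-1}f(|x|)$ respectively.

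For part (f), $\widetilde p_t$ involves the full unrestricted measure $\nu$, so a direct limit of Lemma \ref{lm:useful}(f) is not available; instead I would plug part (e) of the corollary directly into the series expansion of $\widetilde p_t$, obtaining
\[
\widetilde p_t(x) \le \frac{f(|x|)}{\Psi(\infty)}\sum_{n=1}^{\infty}\frac{(tC_{11}\Psi(\infty))^n}{n!} = \frac{f(|x|)}{\Psi(\infty)}\bigl(e^{tC_{11}\Psi(\infty)}-1\bigr),\quad |x|>3r_0.
\]
For $t\in(0,t_0]$ with $t_0=1/\Psi(1/r_0)$ the exponent is bounded by $M:=C_{11}\Psi(\infty)/\Psi(1/r_0)$, and the elementary inequality $e^u-1\le u\,e^M$ on $[0,M]$ then yields the linear bound $\widetilde p_t(x)\le C_{12}\,t\,f(|x|)$ with $C_{12}=C_{11}e^M$. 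There is no serious obstacle in the argument: the only nontrivial ingredient is the routine induction giving $\nu_r^{n*}\nearrow \nu^{n*}$ as $r\to 0^+$, immediate from monotone convergence applied to the defining convolution integrals, and the finiteness $\Psi(\infty)<\infty$ is precisely what makes all the limiting bounds finite.
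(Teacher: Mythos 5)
Your proof is correct and follows essentially the same route as the paper, which simply states that Corollary~\ref{cor:useful} follows from Lemma~\ref{lm:useful} ``together with Fatou's lemma''; your use of monotone convergence (exploiting $\nu_r^{n*}\nearrow\nu^{n*}$ and $\Psi(1/r)\nearrow\Psi(\infty)<\infty$ as $r\to0^+$) is an inessential variation of the same limiting argument, and you correctly note that (b), (c) carry over verbatim since they concern only the profile $f$. Your explicit re-derivation of part (f) from part (e) via the series for $\widetilde p_t$ and the elementary bound $e^u-1\le u e^M$ on $[0,M]$ is a sound, self-contained way to handle the one item where a direct limit of Lemma~\ref{lm:useful}(f) (which concerns $\bar p_t$, built from $\nu_{h(t)}$ with $t$ fixed) is not literally available; this matches the spirit of the computation the paper invokes via its reference for Lemma~\ref{lm:useful}(f).
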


\begin{corollary}\label{cor:useful_2} 
Let $\nu(\R^d \setminus \left\{0\right\}) < \infty$ and let the assumptions \textbf{(B)} and \textbf{(C)} hold with some $E \subset \sfera$ and $\kappa \geq 0$. There exists a constant $C_{17} >0$ such that
$$
\int_{\R^d} e^{\kappa (\scalp{\theta}{z})} \nu^{n*}(z) \, dz \leq (C_0/C_1) \big(C_{17} \Psi(\infty) \big)^n, \quad  \ \theta \in E, \ n \in \N,
$$ 
and
$$
\lim_{R \to \infty} \sup_{\theta \in  E} \int_{|z| > R} e^{\kappa (\scalp{\theta}{z})} \nu^{n*}(z) \, dz = 0, \quad n \in \N.
$$
\end{corollary}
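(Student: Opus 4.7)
The argument follows the same scheme as Lemma \ref{lm:useful_2}, with the obvious modification that we now work with the full finite measure $\nu$ (no truncation $\nu_r$ is needed) and the constant $\Psi(\infty) < \infty$ replaces the $r$-dependent quantity $\Psi(1/r)$ throughout. Lemma \ref{lm:useful} parts (d) and (e) are replaced by their finite-measure counterparts in Corollary \ref{cor:useful}. The key observation is that both inequalities on the right-hand sides of Corollary \ref{cor:useful}(d)--(e) are independent of $\theta\in E$, which will allow us to extract uniformity in $\theta$ for free.

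For the first inequality, I fix an arbitrary $r_0 > 0$ and $n \in \N$. Combining assumption \textbf{(B)} ($\nu(s\theta-z) \leq C_0 f(|s\theta-z|)$) with the lower bound \eqref{eq:lower} from \textbf{(C)} (which gives $\nu(s\theta) \geq C_1 f(s)$ since $s\theta \in \Gamma_E$), and then applying Corollary \ref{cor:useful}(d),
\begin{equation*}
\int_{|s\theta - z| > r_0} \frac{\nu(s\theta - z)}{\nu(s\theta)}\, \nu^{n*}(z)\, dz
\leq \frac{C_0}{C_1 f(s)} \int_{|s\theta - z| > r_0} f(|s\theta-z|)\, \nu^{n*}(z)\, dz
\leq \frac{C_0}{C_1}\bigl(C_{10}\Psi(\infty)\bigr)^n,
\end{equation*}
for all $s$ large enough and every $\theta\in E$. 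The assumption \eqref{eq:add_conv} gives pointwise convergence of $\nu(s\theta-z)/\nu(s\theta)$ to $e^{\kappa(\scalp{\theta}{z})}$ as $s \to \infty$, so Fatou's lemma yields the first claim with $C_{17} := C_{10}$.

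For the second claim, I fix $n \in \N$ and let $R > 3r_0$. By Corollary \ref{cor:useful}(e), applied to the term $\nu^{n*}(z)$ with $|z| > R$, together with \textbf{(B)} and the definition of $K$ in \eqref{ass:sjp},
\begin{equation*}
\int_{|s\theta - z| > R,\, |z| > R} \frac{\nu(s\theta-z)}{\nu(s\theta)}\, \nu^{n*}(z)\, dz
\leq \frac{C_0\, C_{11}^n \, \Psi(\infty)^{n-1}}{C_1 f(s)} \int_{|s\theta - z| > R,\, |z| > R} f(|s\theta-z|)\, f(|z|)\, dz
\leq \frac{C_0\, C_{11}^n \, \Psi(\infty)^{n-1}}{C_1}\, K(R),
\end{equation*}
uniformly in $\theta \in E$ and in $s$ sufficiently large. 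Taking $\liminf_{s \to \infty}$ on both sides and applying Fatou's lemma on the left (using \eqref{eq:add_conv} once more), we obtain
\begin{equation*}
\sup_{\theta \in E}\int_{|z| > R} e^{\kappa(\scalp{\theta}{z})}\, \nu^{n*}(z)\, dz
\leq \frac{C_0\, C_{11}^n \, \Psi(\infty)^{n-1}}{C_1}\, K(R),
\end{equation*}
and the right-hand side tends to $0$ as $R \to \infty$ by \textbf{(B)}. There is no genuine obstacle here: the finite-measure setting actually simplifies the analysis since $\Psi(\infty)$ is a single constant and no $r$-dependent control is required.
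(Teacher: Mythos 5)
Your proof is correct and follows essentially the same scheme as the paper's Lemma \ref{lm:useful_2} (the paper indeed treats this corollary as a direct consequence, with $\Psi(\infty)$ taking the role of $\Psi(1/r)$ and Corollary \ref{cor:useful}(d)--(e) replacing Lemma \ref{lm:useful}(d)--(e)). The combination of the two-sided control $\nu(s\theta-z)\leq C_0 f(|s\theta-z|)$ and $\nu(s\theta)\geq C_1 f(s)$ with Fatou's lemma and the $K$-bound is exactly the intended argument.
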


By following the lines of the proofs of Lemmas \ref{lem:conv} and \ref{lem:conv_Poiss}, based on Lemmas \ref{lm:useful} and \ref{lm:useful_2} replaced with Corollaries \ref{cor:useful} and \ref{cor:useful_2} formulated above, we obtain the following results on the convergence of convolutions of the finite L\'evy densities and the corresponding densities $\widetilde{p}_t$. This can be done by direct inspection and, therefore, the proofs are omitted.

\begin{lemma} \label{lem:conv_finite}
Let $\nu(\R^d \setminus \left\{0\right\}) < \infty$ and let the assumptions \textbf{(B)} and \textbf{(C)} hold with some $E \subset \sfera$ and $\kappa \geq 0$. Then we have the following. 
\begin{itemize}
\item[(a)]
For every $n \in \N$, $\theta \in E$ and $y \in \R^d$
\begin{equation}\label{eq:conv_limit_0_finite}
	\lim_{s \to \infty} \frac{\nu^{n*}(s \theta-y)}{\nu(s \theta)} = 
	e^{\kappa (\scalp{\theta}{y})} n \left(\int e^{\kappa (\scalp{\theta}{z})}\nu(z)\, dz\right)^{n-1}.
\end{equation}
\item[(b)] If the convergence in \textbf{(C)} is uniform in $(\theta,y)$ on each rectangle $E \times D$, for every compact set $D \subset \R^d$, then for any $n \in \N$ the convergence \eqref{eq:conv_limit_0_finite} is uniform in $(\theta,y)$ on each rectangle $E \times B(0,\varrho)$, $\varrho >0$.
\end{itemize}
\end{lemma}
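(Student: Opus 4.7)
\textbf{Proof proposal for Lemma \ref{lem:conv_finite}.}

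The plan is to adapt the inductive argument of Lemma \ref{lem:conv} almost verbatim to the finite-measure setting, exploiting the following simplifications: the restricted measures $\nu_r$ are replaced by the full (finite) measure $\nu$, all $r$-dependence disappears, and the normalization $\Psi(1/r)^{n-1}$ can be dropped because $|\nu| < \infty$ and $\Psi(\infty) < \infty$. Wherever the original proof appealed to Lemma \ref{lm:useful} or Lemma \ref{lm:useful_2}, one instead invokes Corollary \ref{cor:useful} or Corollary \ref{cor:useful_2}, which package exactly the same estimates with $\Psi(\infty)$ in place of $\Psi(1/r)$.

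For part (a) I would induct on $n$. The base case $n=1$ is assumption \textbf{(C)}. For the inductive step, set
$$\widetilde V_{n,\theta,y}(s) := \left|\frac{\nu^{(n+1)*}(s\theta - y)}{\nu(s\theta)} - e^{\kappa(\scalp{\theta}{y})}(n+1)\left(\int e^{\kappa(\scalp{\theta}{z})}\nu(z)\,dz\right)^{n}\right|,$$
fix $R > 3r_0$, $s > 2R + |y|$, and split
\begin{align*}
\nu^{(n+1)*}(s\theta - y) &= \int_{|z|\le R} \nu(s\theta-y-z)\,\nu^{n*}(z)\,dz + \int_{|w|\le R} \nu^{n*}(s\theta-y-w)\,\nu(w)\,dw \\
&\quad + \int_{|z|>R,\;|s\theta-y-z|>R} \nu(s\theta-y-z)\,\nu^{n*}(z)\,dz.
\end{align*}
Combined with the identity $\int e^{\kappa(\scalp{\theta}{z})}\nu^{n*}(z)\,dz = \bigl(\int e^{\kappa(\scalp{\theta}{z})}\nu(z)\,dz\bigr)^n$, this yields a decomposition into five terms $\widetilde I_1,\ldots,\widetilde I_5$ in exact analogy with the original proof. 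The two ``main'' terms --- the integrals over $\{|z|\le R\}$ and $\{|w|\le R\}$ of $\bigl|\nu(s\theta-y-z)/\nu(s\theta) - e^{\kappa(\scalp{\theta}{(y+z)})}\bigr|$ and of the corresponding $(n)$-fold difference, respectively --- tend pointwise to zero by \textbf{(C)} and the inductive hypothesis, and dominated convergence applies since, by Corollary \ref{cor:useful} (b), (e) and the lower bound in \textbf{(C)}, the integrands are majorized by constants depending only on $R$, $|y|$ and $n$ on each compact ball. The double-tail term is controlled using Corollary \ref{cor:useful} (e) and the very definition of $K$ in \textbf{(B)} to get an upper bound $\le c\,K(R)\to 0$ as $R\to\infty$. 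The two remaining tails $\int_{|z|>R} e^{\kappa(\scalp{\theta}{z})}\nu^{n*}(z)\,dz$ and $\int_{|w|>R} e^{\kappa(\scalp{\theta}{w})}\nu(w)\,dw$ vanish as $R\to\infty$ by Corollary \ref{cor:useful_2}. Putting these observations together yields $\widetilde V_{n,\theta,y}(s)\to 0$ as $s\to\infty$.

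For part (b), one repeats the same argument while taking $\sup_{(\theta,y)\in E\times B(0,\varrho)}$ throughout. The uniform version of \textbf{(C)} handles the $\widetilde I_1$-term, the uniform inductive hypothesis handles $\widetilde I_3$, the double-tail bound $c\,K(R)$ is already uniform in $\theta$, and the uniformity in $\theta\in E$ of the tail estimates in $\widetilde I_2$ and $\widetilde I_5$ is exactly the content of Corollary \ref{cor:useful_2}.

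There is no serious obstacle here; the argument is a direct transcription. The only point that deserves care is the bookkeeping of the dominating constants on the bounded region $\{|z|\le R\}$: one must verify that they are independent of $s$ (for dominated convergence) and integrable against $\nu^{n*}$ and $\nu$, which is ensured by Corollary \ref{cor:useful} (b) together with the two-sided profile relation $\nu(s\theta)\asymp f(s)$ valid on $\Gamma_E$ via \eqref{eq:lower} and \textbf{(B)}.
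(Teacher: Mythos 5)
Your proposal is correct and follows exactly the route the paper itself indicates: the paper states that Lemma \ref{lem:conv_finite} is obtained "by following the lines of the proofs of Lemmas \ref{lem:conv} and \ref{lem:conv_Poiss}, based on Lemmas \ref{lm:useful} and \ref{lm:useful_2} replaced with Corollaries \ref{cor:useful} and \ref{cor:useful_2}" and that "this can be done by direct inspection", with the proof omitted. Your write-up is precisely that direct transcription (induction on $n$, the same three-way split of the convolution, the factorization identity for $\int e^{\kappa(\theta\cdot z)}\nu^{n*}(z)\,dz$, the five-term estimate with the double-tail controlled by $K(R)\to 0$ and the single tails by Corollary \ref{cor:useful_2}), so it matches the intended argument; only note that you should fix a value of $r_0>0$, e.g.\ $r_0=1$, before invoking Corollary \ref{cor:useful}, since $r_0$ no longer arises as a truncation radius in the finite-measure setting.
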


\begin{lemma} \label{lem:conv_Poiss_finite}
Let $\nu(\R^d \setminus \left\{0\right\}) < \infty$ and let the assumptions \textbf{(B)}-\textbf{(C)} hold with some $E \subset \sfera$ and $\kappa \geq 0$. Moreover, let $t_0>0$ be arbitrary.
\begin{itemize}
\item[(a)]
For every $t \in (0,t_0]$, $\theta \in E$ and $y \in \R^d$ one has
\begin{equation}\label{eq:conv_Poisson_finite}
	\lim_{s \to \infty} \frac{\widetilde{p}_t(s \theta-y)}{t \, \nu(s \theta)} = 
	\exp\left(\kappa (\scalp{\theta}{y}) \right) \, \exp\left( t \int \left(e^{\kappa (\scalp{\theta}{z})} - 1\right) \, \nu(z)\, dz\right).
\end{equation} 
\item[(b)] If the convergence in \textbf{(C)} is uniform in $(\theta,y)$ on each rectangle $E \times D$, for every compact set $D \subset \R^d$, then the convergence in \eqref{eq:conv_Poisson_finite} is uniform in $(t,\theta,y)$ on each cuboid $(0,t_0] \times E \times B(0,\varrho)$, $\varrho >0$.
\end{itemize}
\end{lemma}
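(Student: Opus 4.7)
The plan is to mirror the proof of Lemma \ref{lem:conv_Poiss} almost verbatim, with the finite-measure analogs replacing the truncated ones. First I would write, for every $t \in (0,t_0]$, $\theta \in E$, $y \in \R^d$,
\begin{align*}
& \left|\frac{\widetilde p_t(s\theta-y)}{t\,\nu(s\theta)} - e^{\kappa(\scalp{\theta}{y})}\exp\Bigl(t\int(e^{\kappa(\scalp{\theta}{z})}-1)\nu(z)\,dz\Bigr)\right| \\
& \qquad \leq e^{-t|\nu|} \sum_{n=1}^{\infty} \frac{t^{n-1}}{n!}\, \widetilde J(s,t,\theta,y,n),
\end{align*}
where
\[
\widetilde J(s,t,\theta,y,n) := \left|\frac{\nu^{n*}(s\theta-y)}{\nu(s\theta)} - e^{\kappa(\scalp{\theta}{y})} n \left(\int e^{\kappa(\scalp{\theta}{z})}\nu(z)\,dz\right)^{n-1}\right|.
\]
This identity follows by expanding $\widetilde p_t$ as a Poisson series and observing that the target limit factors as $e^{-t|\nu|}e^{\kappa(\scalp{\theta}{y})}\sum_{n\geq 1}\frac{t^{n-1}}{(n-1)!}\bigl(\int e^{\kappa(\scalp{\theta}{z})}\nu(z)\,dz\bigr)^{n-1}$, i.e.\ the term-by-term limit of the left-hand Poisson series according to Lemma \ref{lem:conv_finite}(a).

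For part (a) the pointwise convergence $\widetilde J(s,t,\theta,y,n) \to 0$ as $s \to \infty$ for each fixed $n$ is exactly Lemma \ref{lem:conv_finite}(a). To justify swapping the limit and the infinite sum, I would invoke a dominated-convergence argument: combining Corollary \ref{cor:useful}(e) (which gives $\nu^{n*}(s\theta-y)/\nu(s\theta) \leq c_1 \, C_{11}^n \Psi(\infty)^{n-1}$ for $s$ large, using also Corollary \ref{cor:useful}(b) to absorb the shift by $y$) with Corollary \ref{cor:useful_2} (which bounds $\int e^{\kappa(\scalp{\theta}{z})}\nu(z)\,dz \leq (C_0/C_1)C_{17}\Psi(\infty)$) yields the uniform-in-$s$ estimate
\[
\widetilde J(s,t,\theta,y,n) \leq (C_0/C_1) C_{11}^n + e^{\kappa|y|} n \bigl((C_0/C_1)C_{17}\Psi(\infty)\bigr)^{n-1},
\]
which is summable against $t^{n-1}/n!$ for $t\in(0,t_0]$. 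Dominated convergence then gives \eqref{eq:conv_Poisson_finite}.

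For part (b), fix $\varrho > 0$ and pass the supremum over $(t,\theta,y) \in (0,t_0]\times E \times B(0,\varrho)$ inside the sum on the right-hand side. The dominating bound above is already uniform on this set, so it suffices to verify that $\sup_{(t,\theta,y)} \widetilde J(s,t,\theta,y,n) \to 0$ as $s\to\infty$ for each $n$; this is precisely the content of Lemma \ref{lem:conv_finite}(b) (applied on $E \times B(0,\varrho)$). A second application of dominated convergence, now with the summable majorant $\frac{t_0^{n-1}}{n!}\bigl[(C_0/C_1)C_{11}^n + e^{\kappa\varrho}n((C_0/C_1)C_{17}\Psi(\infty))^{n-1}\bigr]$, concludes the proof.

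The only nontrivial ingredient is securing the uniform majorant for the Poisson series — but this is cheap here because $\nu$ is finite, so $\Psi(\infty) < \infty$ and no $r$-dependent normalization is needed (unlike in Lemma \ref{lem:conv_Poiss}, where the factors $[\Psi(1/h(t))]^{n-1}$ had to be tracked carefully). Consequently the finite-measure proof is strictly simpler than its truncated counterpart, and essentially reduces to replaying the earlier argument with the simpler bookkeeping afforded by Corollaries \ref{cor:useful}--\ref{cor:useful_2} and Lemma \ref{lem:conv_finite}.
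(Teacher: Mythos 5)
Your proposal follows exactly the route the paper prescribes (the paper omits the proof, saying only to replay Lemma~\ref{lem:conv_Poiss} with Corollaries~\ref{cor:useful} and~\ref{cor:useful_2} substituted for Lemmas~\ref{lm:useful} and~\ref{lm:useful_2}), and the decomposition, the term-by-term limit via Lemma~\ref{lem:conv_finite}, and the dominated-convergence closure are all correct. One small slip: in the displayed majorant for $\widetilde J$ the first term should carry the factor $\Psi(\infty)^{n-1}$, i.e.\ it should read $(C_0/C_1)\,C_{11}^n\,\Psi(\infty)^{n-1}$ rather than $(C_0/C_1)\,C_{11}^n$, since unlike the $J$ of Lemma~\ref{lem:conv_Poiss} your $\widetilde J$ carries no $[\Psi]^{n-1}$ normalization in the denominator to cancel it; the series $\sum_n \frac{t^{n-1}}{n!}\bigl(C_{11}\Psi(\infty)\bigr)^{n-1}$ still converges, so the dominated-convergence conclusion is unaffected.
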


We are now in position to give the proof of our second main Theorem \ref{thm:main_second}.

\begin{proof}[Proof of Theorem \ref{thm:main_second}]
The proof is in fact a version of that of Theorem \ref{thm:main}. Theorefore, we mainly focus on pointing out the crucial differences and omit the details. Let the assumptions \textbf{(A)} and \textbf{(B)} hold and let $\nu(\R^d \setminus \left\{0\right\}) < \infty$. Moreover, consider arbitrary $b \in \R^d$ and suppose that the assumption \textbf{(C)} hold with some $E \subset \sfera$ and $\kappa \geq 0$. Let $r_0 = 1$ and $R_0 \geq r_0=1$ be the radius provided by Lemma \ref{lem:lambda_est} (a) and let $t_0>0$ be fixed. 

We first consider the case $\inf_{|\xi|=1} \xi \cdot A \xi > 0$ in the assumption \textbf{(A)}. 

\vspace{0.1cm}

\noindent
(a) It suffices to show that for every $t \in (0,t_0]$, $\theta \in E$ and $y \in \R^d$,
\begin{align} \label{eq:res_conv_initial_finite}
\lim_{r \to \infty} \frac{p_t(r\theta -y + t \widetilde b )}{t \, \nu(r\theta)} =  e^{\kappa (\scalp{\theta}{y})} \exp\left(t\int \left( e^{\kappa (\scalp{\theta}{z})} - 1 \right)\nu(z) dz\right) \, \int_{\R^d} e^{\kappa (\scalp{\theta}{z})} g_t(z) dz.
\end{align}
If this is true, then by substituting $y = w + t \widetilde b$ (recall that $\widetilde b = b - \int_{|y|<1} y \nu(y) dy$), the assertion (a) of the theorem also holds.

The convergence in \eqref{eq:res_conv_initial_finite} can be justified by following the estimates in the proof of Theorem \ref{thm:main} (a).
First of all, note that the counterpart of \eqref{eq:W_est} can be established by applying the decomposition formula \eqref{eq:basic_conv_3} instead of \eqref{eq:basic_conv_1}, with $\lambda_t(z)$ replaced by $g_t(z)$. All the members on the right hand side of this estimate can  be then effectively estimated by using Corollaries \ref{cor:useful}-\ref{cor:useful_2} and Lemma  \ref{lem:lambda_est} (a) instead of Lemmas \ref{lm:useful}-\ref{lm:useful_2} and \ref{lem:lambda_est} (b). Also, the convergence of the corresponding member $I_4$ to zero follows directly from Lemma \ref{lem:conv_Poiss_finite}. 

\vspace{0.1cm}

\noindent
(b) If the convergence in \textbf{(C)} is uniform in $(\theta,y)$ on each rectangle $E \times D$, for every compact set $D \subset \R^d$, then by the fact that $|t \widetilde b|$ is uniformly bounded in $t \in (0,t_0]$, it is enough to prove that \eqref{eq:res_conv_initial_finite} holds uniformly in $(t,\theta,y)$ on each cuboid $(0,t_0] \times E \times B(0,\varrho)$, $\varrho >0$. However, this can be done
exactly in the same way as in the proof of Theorem \ref{thm:main} (b), by justifying that all the estimates established in part (a) and the convergence of the countepart of $I_4$ to zero are also uniform. 
\vspace{0.1cm}

\noindent

If $A \equiv 0$ in assumption \textbf{(A)}, then the assertions of the theorem follows directly from Lemma \ref{lem:conv_Poiss_finite}. 
\end{proof}

\section{Discussion of specific classes of semigroups} \label{sec:Examples}
\subsection{Stable semigroups (possibly with drift and Gaussian part)}

First we consider the well known example of stable semigropus, e.g., semigroups generated by the L\'evy measures

\begin{equation}\label{stablenu}
	\nu(x)\, dx = |x|^{-\alpha-d}g(x/|x|)\, dx,\quad x\in\Rd\setminus\{ 0\},
\end{equation}
where $\alpha\in (0,2)$ and $g:\sfera \to [0,\infty)$ is such that $0 \leq g(\theta) \leq c$, $\theta\in\sfera$ for some
positive constant $c$ and 
\begin{equation}\label{nondegenerete}
  \int_{\sfera} g(\theta)\, d\theta\, >0.
\end{equation}


It is straightforward to verify that for $f(s)=s^{-\alpha-d}$ we have $K(r)\asymp r^{-\alpha} \to 0$ as $r \to \infty$ and, by \eqref{nondegenerete}, the condition \eqref{ass:low_reg} holds as well. Thus the assumption \textbf{(B)} is satisfied. We also observe that in this case for $\theta \in \sfera$ and $y\in\Rd$ we have
\begin{eqnarray*}
	\lim_{r \to \infty} \frac{\nu(r \theta-y)}{\nu(r\theta)} 
	&  =  & \lim_{r \to \infty} \frac{|r\theta - y|^{-\alpha-d}g((r\theta-y)/|r\theta-y|)}{r^{-\alpha-d}g(\theta)} \\
	&  =  & \lim_{r \to \infty} \frac{|\theta - y/r|^{-\alpha-d}g((\theta-y/r)/|\theta-y/r|)}{g(\theta)} = 1,
\end{eqnarray*}
uniformly in $y$ on $B(0,\rho)$ for every $\rho>0$, provided that $g$ is positive and continuous at $\theta$. 
If $g$ is uniformly continuous and bounded from below by a positive constant
on $E \subset\sfera$ then the convergence is uniform in $(\theta,y)$ on $E \times B(0,\rho)$ for
every $\rho>0$. This clearly gives \textbf{(C)}. Moreover, the assumption \textbf{(D)} holds with $\Psi_{-}(1/t) \asymp t^{-1/\alpha}$ and $T=(0,t_0)$, for
every $t_0>0$ since we have $\Re \Phi(\xi) \asymp |\xi|^{\alpha}$. Hence using Theorem \ref{thm:main} we obtain the following result.

\begin{theorem}\label{stable_ex}
  If $A=0$ or $\inf_{|\xi|=1} \xi \cdot A \xi > 0$, $b\in\Rd$ and the L\'evy measure $\nu$ is given by \eqref{stablenu}, where $0 \leq g(\theta) \leq c$,  $\theta\in\sfera$, for some constant
	$c>0$, and $g$ satisfies \eqref{nondegenerete}, 
	then for every $t>0$ there exists a  density $p_t$ and for 
	every $\theta\in\sfera$ such that
	$g$ is positive and continuous at $\theta$ we have
	$$
	  \lim_{r \to \infty} \frac{p_t(r\theta -y)}{t \, r^{-\alpha-d}} = g(\theta),
	$$
	uniformly in $(t,y)$ on each rectangle $(0,t_0)\times B(0,\rho)$, $t_0>0$, $\rho>0$.
	If $g$ is uniformly continuous on $E \subset\sfera$ and $g(\theta)\geq c_1>0$
	for $\theta \in E$ then the convergence is uniform
	in $(t,\theta,y)$ on $(0,t_0) \times E \times B(0,\rho)$ for every
	$t_0>0$ and $\rho>0$.
\end{theorem}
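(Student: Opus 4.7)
The strategy is to apply Theorem \ref{thm:main} with $\kappa = 0$, after verifying that the assumptions \textbf{(A)}--\textbf{(D)} hold in the present setting. Since $\kappa = 0$ forces $\widetilde\psi(\kappa\theta) = \widetilde\psi(0) = 0$ in \eqref{eq:hyp_exp}, the conclusion \eqref{eq:res_conv} reduces to $p_t(r\theta - y)/(t\nu(r\theta)) \to 1$, and plugging in $\nu(r\theta) = r^{-\alpha-d} g(\theta)$ produces the formula in Theorem \ref{stable_ex}. The assumption \textbf{(A)} is given by hypothesis, and much of the work has already been outlined in the paragraph preceding the statement; the plan is to make that outline precise and then quote Theorem \ref{thm:main}.

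For \textbf{(B)}, I would take $f(s) = s^{-\alpha-d}$, so that $\nu(x) \leq c\, f(|x|)$ follows from $0 \leq g \leq c$. The lower regularity \eqref{ass:low_reg} is a direct polar-coordinate computation using \eqref{nondegenerete}. The decay $K(r) \asymp r^{-\alpha} \to 0$ follows from a scaling argument: writing the integral over $|x-y|>r$, $|y|>r$ with $|x|>1$ and using radiality of $f$, the substitution $y = r z$ reduces $K(r)$ to a finite integral times $r^{-\alpha}$.

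For \textbf{(C)} with $\kappa = 0$, the pointwise limit $\nu(r\theta - y)/\nu(r\theta) \to 1$ follows from the computation already displayed in the excerpt, using continuity and positivity of $g$ at $\theta$ and the fact that $|\theta - y/r| \to 1$ uniformly in $y$ on $B(0,\varrho)$. The lower bound \eqref{eq:lower} translates into $\inf_{\theta \in E} g(\theta) > 0$. For the pointwise part of the theorem, I would therefore invoke Theorem \ref{thm:main} with $E$ equal to a small neighbourhood of $\theta$ on which $g$ is bounded below (as permitted by Remark \ref{rem:assumptions}(c)); for the uniform part, the assumed uniform continuity and uniform positive lower bound of $g$ on $E$ yield the required uniformity in $(\theta,y)$ on $E \times B(0,\varrho)$.

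Finally, for \textbf{(D)}, combining \eqref{nondegenerete} with \eqref{stablenu} one gets $\Re\Phi(\xi) \asymp |\xi|^\alpha$, whence $\Psi(r) \asymp r^\alpha$ and $\Psi_{-}(1/t) \asymp t^{-1/\alpha}$. A spherical integration then gives $\int e^{-t\Re\Phi(\xi)}|\xi|\,d\xi \asymp t^{-(d+1)/\alpha} \asymp (\Psi_{-}(1/t))^{d+1}$, uniformly in $t$ on any bounded subset of $(0,\infty)$, so \textbf{(D)} holds with $T = (0,t_0)$ for every $t_0 > 0$; this is also consistent with the scaling criterion recorded in Remark \ref{rem:assumptions}(e). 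With all four hypotheses in place, the pointwise and uniform assertions of Theorem \ref{stable_ex} follow directly from Theorem \ref{thm:main}(a) and (b) respectively. I do not anticipate a genuine obstacle here: the delicate work is concentrated in Theorem \ref{thm:main} itself, and the present argument is essentially a matter of bookkeeping and translation between $\nu$, $f$, and $\Psi$.
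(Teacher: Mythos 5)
Your proposal follows the paper's own route essentially verbatim: take $f(s)=s^{-\alpha-d}$, verify $K(r)\asymp r^{-\alpha}\to 0$ and \eqref{ass:low_reg} via \eqref{nondegenerete}, check \textbf{(C)} with $\kappa=0$ from the continuity of $g$, verify \textbf{(D)} from $\Re\Phi(\xi)\asymp|\xi|^\alpha$ giving $\Psi_{-}(1/t)\asymp t^{-1/\alpha}$, and then invoke Theorem \ref{thm:main}.

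One small imprecision is worth fixing. For the pointwise assertion you propose to apply Theorem \ref{thm:main}(b) with $E$ a small neighbourhood of $\theta$ on which $g$ is bounded below. That is not quite safe: positivity and continuity of $g$ at the single point $\theta$ do give a positive lower bound on a neighbourhood, but they do \emph{not} give uniform continuity of $g$ on that neighbourhood, which is what Theorem \ref{thm:main}(b) requires for the convergence in \textbf{(C)} to be uniform on $E\times D$. The right choice, and the one Remark \ref{rem:assumptions}(c) is designed for, is $E=\{\theta\}$ (a singleton). With $E=\{\theta\}$, condition \eqref{eq:lower} reduces to $g(\theta)>0$, and the required uniformity of \eqref{eq:add_conv} in $(\theta,y)$ on $E\times D$ is simply uniformity in $y$ on compacts, which the displayed computation gives under continuity of $g$ at $\theta$. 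With that correction, your argument matches the paper's.
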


Note that if we have $A=0$ and
$$
  b = \left\{
  \begin{array}{ccc}
    \int_{|y|<1} y \, \nu(dy)   & \mbox{  if  } & \alpha < 1,\\
    0                           & \mbox{  if  } & \alpha=1, \\
    - \int_{1\leq |y|} y \, \nu(dy) & \mbox{  if  } & \alpha > 1,
  \end{array}\right.
$$
and for $\alpha=1$  additionally
$$
  \int_{\sfera} \theta\,g(\theta) d\theta = 0,
$$
then we obtain a strictly stable semigroup (see \cite[Th. 14.7]{Sato}). We note that for strictly stable semigroups of measures similar result was obtained by J Dziuba\'nski in \cite{D91} under stronger assumption that $g$ is symmetric and continuous on $\sfera$. The main novelty of our present result
for stable semigroups in $\R^d$ is that it does not require any symmetry assumptions (we can treat even highly asymmetric spherical densities $g$) and that it is local on the sphere $\sfera$, i.e. we obtain the asymptotics in generalized cones $\Gamma_E$ for arbitrary subsets $E \subset \sfera$, provided $g$ is continuous and separated from zero on $E$.

On the other hand, our present results do not apply to those $\theta \in \sfera$ for which $g(\theta) = 0$. This case is much more difficult and requires essential modifications in our present framework. Its systematic study is a subject of our ongoing project. 
  
For better illustration, we propose now to consider a particular example of stable L\'evy measure and the corresponding heat kernel. 

\begin{example} \label{ex:ex1} {\rm Let $\nu$ be a stable density on $\R^2$ given by \eqref{stablenu} with
\begin{displaymath}
  g(\theta) = g((\theta_1,\theta_2)) = \left\{
	\begin{array}{ccc}
	  1 & \text{for} & \theta_1 \theta_2\geq 0, \\
	  2 & \text{for} & \theta_1 \theta_2 < 0.
  \end{array}
  \right.
\end{displaymath} 
It follows from Theorem \ref{stable_ex} that for such $\nu$, $A=0$ and $b=0$ we have
\begin{equation}\label{ex_stable2}
  \lim_{r\to\infty} \frac{p_t(r\theta-y)}{tr^{-\alpha-2}} = \left\{
	\begin{array}{ccc}
	  1 & \text{for} & \theta_1 \theta_2 > 0, \\
	  2 & \text{for} & \theta_1 \theta_2 < 0.
  \end{array}
  \right.
\end{equation} 

We note that the above convergence is uniform on every cube
$$(0,t_0)\times \{\theta \in \sfera :\: \theta_1\theta_2 > \delta\}\times B(0,\rho)$$ and 
$$ (0,t_0)\times \{\theta \in \sfera :\: \theta_1\theta_2<-\delta\}\times B(0,\rho),$$ for all $t_0>0,\delta\in (0,1/2)$, $\rho>0$.
This, however, yields that there exists $R>0$ such that
\begin{displaymath}
	p_t(r\theta)\leq \frac{5}{4} t r^{-\alpha-2},\quad r>R, \ \ \theta_1\theta_2>\delta,
\end{displaymath}
and
\begin{displaymath}
	p_t(r\theta)\geq \frac{3}{2} t r^{-\alpha-2}, \quad r>R, \ \ \theta_1\theta_2<-\delta.
\end{displaymath}
Hence the uniform continuity can not hold for any cube above with $\delta=0$, since it contradicts the continuity of $p_t$
(note that $p_t$ is smooth function for every $t>0$ since $\Re \Phi(\xi) \asymp |\xi|^{\alpha}$). 
We also do not know what happens for $\theta \in \sfera$ with $\theta_1 \theta_2 = 0$. 
}
\end{example}

We can also slightly modify the stable examples considering densities such that 
\begin{displaymath}
	  \nu(x)=g(x/|x|) |x|^{-\alpha-d}[\log(1+|x|^{-\kappa})]^{-\beta},
\end{displaymath}
where $\alpha\in(0,2]$, $\kappa>0, \alpha>\kappa\beta>\alpha-2$ and $\beta>1$ if $\alpha=2$.
In this case we have $\Re \Phi(\xi)\asymp |\xi|^{\alpha}[\log(1+|\xi|^\kappa)]^{-\beta}$, $\Psi_{-}(1/t)\asymp t^{-1/\alpha} \left[\log\left(1+\frac{1}{t}\right)\right]^{\frac{\beta}{\alpha}}$
if $\alpha\in (0,2)$, $t<t_0$ and $\Re \Phi(\xi)\asymp |\xi|^{2}[\log(1+|\xi|^{\kappa\beta/(\beta-1)})]^{1-\beta}$, $\Psi_{-}(1/t)\asymp  t^{-1/2} \left[\log\left(1+\frac{1}{t}\right)\right]^{\frac{\beta-1}{2}}$
if $\alpha = 2$, $t<t_0$, for every $t_0>0$ (see \cite[Thm. 4]{KSz1}). Then the results analogous to Theorem \ref{stable_ex} also hold with $g$ satisfying the same conditions. We omit the straightforward verification of assumptions of Theorem \ref{thm:main}. 

\vspace{1cm}

For the next examples we need the following Lemma.
We consider here a class of dominating profiles (majorants) for L\'evy measures with polynomial, stretched-exponential, 
exponential and super-exponential decay at infinity and give a full characterization of the condition \eqref{ass:sjp} in
assumption \textbf{(B)} for this class.

\begin{lemma}\label{see_charB}
   Let $m\geq 0$, $\beta \geq 0$, $\delta\geq 0$, 
	 and
	 \begin{displaymath}
		 f(s) = e^{-ms^{\beta}} s^{-\delta},\quad s>1.	
	 \end{displaymath}
	 Then the condition \eqref{ass:sjp} in \textbf{(B)} holds exactly in the following three disjoint cases (if and only if)
	 \begin{itemize}
     \item[(a)] $m=0$ and $\delta>d$,
     \item[(b)] $m>0$, $\beta \in (0,1)$ and $\delta \geq 0$,
     \item[(c)] $m>0$, $\beta =1$ and $\delta > (d+1)/2$. 
   \end{itemize}
\end{lemma}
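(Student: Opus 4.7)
The plan is to estimate the restricted double convolution $\int_{|x-y|,|y|>r} f(|x-y|) f(|y|)\,dy$ directly against $f(|x|)$. Writing $A=|x-y|$, $B=|y|$, $C=|x|$, the ratio factors as
\[
\frac{f(A)f(B)}{f(C)} = e^{-m(A^\beta + B^\beta - C^\beta)}\left(\frac{C}{AB}\right)^{\delta},
\]
and splitting the integration region at $|y|=|x|/2$ (the two halves being symmetric under $y\leftrightarrow x-y$) reduces matters to a simpler one-variable integral. In case (a) the exponential factor is trivial; on $|y|\leq |x|/2$ one has $|x-y|\geq|x|/2$, so $(C/(AB))^\delta\leq 2^\delta|y|^{-\delta}$ and $K(r)\leq c\,r^{d-\delta}$, which tends to $0$ precisely when $\delta>d$. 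In case (b) the central ingredient is the quantitative strict subadditivity estimate
\[
A^\beta + B^\beta - (A+B)^\beta \geq c(\beta)\,\min(A,B)^\beta, \qquad \beta\in(0,1),
\]
obtained by writing $u=\min(A,B)/(A+B)\in[0,1/2]$ and using that the function $u\mapsto u^\beta+(1-u)^\beta-1$ behaves like $u^\beta$ near $u=0$ and is bounded below by a positive constant on $[1/4,1/2]$. Combined with the triangle inequality $C\leq A+B$ this yields $A^\beta+B^\beta-C^\beta \geq c(\beta)\min(A,B)^\beta$, and the same split shows $K(r)\leq c\int_{|y|>r} e^{-mc(\beta)|y|^\beta}|y|^{-\delta}\,dy\to 0$ for every $\delta\geq 0$.

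The main obstacle is case (c), where $\beta=1$ and the above subadditivity bound degenerates to the trivial $A+B-C\geq 0$ (with equality on the segment $[0,x]$). Here I would use the exact identity
\[
A+B-C = \frac{4AB\sin^2(\vartheta/2)}{A+B+C},
\]
with $\vartheta$ the angle between the vectors $x-y$ and $y$, and parametrize $y=te_x+v$ with $e_x=x/|x|$ and $v\perp e_x$. For $t\in(0,|x|)$ and $|v|$ not too large, the expansions $B=\sqrt{t^2+|v|^2}\approx t+|v|^2/(2t)$ and $A\approx (|x|-t)+|v|^2/(2(|x|-t))$ give $A+B-C \approx |v|^2|x|/(2t(|x|-t))$, so the factor $e^{-m(A+B-C)}$ is Gaussian in $v$ with variance $t(|x|-t)/(m|x|)$; integration over $v\in\R^{d-1}$ contributes a factor of order $(t(|x|-t)/|x|)^{(d-1)/2}$. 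Combining with the polynomial factor $(C/(AB))^\delta\approx ((|x|-t)t)^{-\delta}|x|^\delta$ and then integrating in $t\in(r,|x|-r)$ via the substitution $t=|x|u$ produces
\[
\frac{(f*f)(x)\big|_{|y|,|x-y|>r}}{f(|x|)} \asymp |x|^{(d+1)/2-\delta}\int_{r/|x|}^{1-r/|x|}(u(1-u))^{(d-1)/2-\delta}\,du.
\]
When $\delta>(d+1)/2$ the integrand is non-integrable at the endpoints, the integral behaves as $c(r/|x|)^{(d+1)/2-\delta}$, and the whole ratio reduces to $c\,r^{(d+1)/2-\delta}$, vanishing uniformly in $|x|$ as $r\to\infty$. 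The auxiliary range $1<|x|\leq 2r$, where the Gaussian approximation breaks down, is treated by the crude bound $e^{-m(A+B)}\leq e^{-2mr}$ on the admissible region together with the trivial lower bound $f(|x|)\geq e^{-2mr}(2r)^{-\delta}$, which together give a contribution that is still summably small as $r\to\infty$.

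Necessity follows from three easy degeneracies. If $m=0$ and $\delta\leq d$, then $f$ is not integrable at infinity and the restricted double convolution already diverges for every $x$ and $r$, so $K(r)=\infty$. If $\beta>1$ (with $m>0$) one takes $y$ in a ball of radius $\varepsilon|x|$ around $x/2$ for some small fixed $\varepsilon>0$; for $|x|$ large this region is admissible, and on it $A^\beta+B^\beta-C^\beta\leq \tfrac{1}{2}|x|^\beta(2^{1-\beta}-1)<0$, so the integrand carries a factor $e^{c\,m\,|x|^\beta}$ with $c>0$ that forces the ratio to explode as $|x|\to\infty$. Finally, if $\beta=1$ and $\delta\leq(d+1)/2$, the asymptotic equivalent displayed above shows that the ratio equals a constant times $|x|^{(d+1)/2-\delta}$ times an integral which either stays bounded below by a positive constant (when $\delta<(d+1)/2$) or grows like $\log(|x|/r)$ (when $\delta=(d+1)/2$); in either case the ratio diverges as $|x|\to\infty$ for every fixed $r$, hence $K(r)=\infty$.
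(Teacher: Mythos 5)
The paper does not actually write out a proof: it cites \cite[Cor.\ 4.2]{KL17} for case (b), \cite[Prop.\ 2]{KSz2} for case (c) and for all converse implications, and declares case (a) straightforward. Your proof is therefore a genuinely independent, self-contained argument, and the two main ideas — the quantitative strict subadditivity bound $A^{\beta}+B^{\beta}-(A+B)^{\beta}\geq c(\beta)\min(A,B)^{\beta}$ for $\beta\in(0,1)$, and for $\beta=1$ the identity $A+B-C = 4AB\sin^{2}(\vartheta/2)/(A+B+C)$ converted into a Gaussian integral in the transversal variable $v$ — are attractive and, in substance, correct. Case (a) sufficiency and case (b) sufficiency go through as written.

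There are, however, two genuine gaps. The more serious one is the auxiliary range $1<|x|\le 2r$ in case (c). The ``crude bound'' $e^{-m(A+B)}\le e^{-2mr}$ together with $f(|x|)\ge e^{-2mr}(2r)^{-\delta}$ gives
$\frac{f(A)f(B)}{f(|x|)}\le (2r)^{\delta}A^{-\delta}B^{-\delta}$, and integrating this over $\{|y|>r,\ |x-y|>r\}$ produces $O(r^{\delta})\cdot O(r^{d-2\delta})=O(r^{d-\delta})$. That vanishes as $r\to\infty$ only when $\delta>d$, which for $d\ge 2$ is strictly stronger than the hypothesis $\delta>(d+1)/2$. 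To close this you must retain a portion of the exponential decay of the excess $\sigma=A+B-|x|\geq 2r-|x|$; for instance the coarea formula over the family of ellipsoids $\{A+B=|x|+\sigma\}$ (whose surface area is $\asymp (|x|+\sigma)\bigl(\sigma(2|x|+\sigma)\bigr)^{(d-2)/2}$ and whose gradient satisfies $|\nabla(A+B)|=\sqrt{\sigma(2|x|+\sigma)/(AB)}$) yields the sharp $O\bigl(r^{(d+1)/2-\delta}\bigr)$ bound uniformly over $1<|x|\le 2r$. Relatedly, in the main range $|x|>2r$ the restriction to $t\in(r,|x|-r)$ does not exhaust the admissible set $\{|y|>r,\ |x-y|>r\}$; the contributions from $t\le r$, $t\ge|x|-r$ and from large $|v|$ need an explicit (exponentially small) dismissal.

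The second gap is in the necessity argument for case (a). For $m=0$ and $d/2<\delta\le d$ the restricted double convolution $\int_{|y|>r,\,|x-y|>r}|x-y|^{-\delta}|y|^{-\delta}\,dy$ does converge for every fixed $x$ and $r$ (the integrand decays like $|y|^{-2\delta}$ with $2\delta>d$), so the claim that it ``already diverges, hence $K(r)=\infty$'' is false on that sub-range. The correct obstruction is the supremum over $x$: taking $y$ in the ball $B(x/2,|x|/4)$, which is admissible once $|x|>4r$, gives a contribution $\gtrsim|x|^{d-\delta}$ to the ratio, so $K(r)=\infty$ when $\delta<d$ and $K(r)\ge c>0$ (bounded below, not tending to $0$) when $\delta=d$. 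The same remark applies to your treatment of the converse for $\beta=1$, $\delta\le(d+1)/2$: it rests on the $\asymp$ asymptotics from case (c), which currently carry the same bookkeeping caveats.

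In short: a valuable direct proof with two genuinely new ideas, replacing what the paper only references, but the case~(c) auxiliary range and the case~(a) necessity as written do not yet yield the claimed conclusion and must be repaired along the lines indicated.
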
  
\begin{proof} The proof that (a) yields \eqref{ass:sjp} is straightforward and we omit the details. For the proof that (b) 
	implies \eqref{ass:sjp} we refer to the proof of \cite[Cor. 4.2]{KL17} and the proof for (c) is
	a simple modification of the corresponding part of proof in \cite[Prop. 2]{KSz2}. The converse implications follow
	directly from \cite[Prop. 2]{KSz2}.
\end{proof}

\subsection{Relativistic stable semigroups}

We consider now an important class of evolution semigroups corresponding to the so-called relativistic stable operators $L=-(m^{2/\alpha}-\Delta)^{\alpha/2}+m$, $\alpha \in (0,2)$, $m>0$ (see e.g. \cite{CMS, KS2006, R2002, S11, KL, KSz2, ChKimSon}). The operator $H_0 = \sqrt{m^2-\Delta}+m$ (i.e. $\alpha = 1$) is known to describe the kinetic energy of a free quasi-relativistic particle and is one of the central objects of the modern investigations in PDEs and mathematical physics (see e.g. \cite{C, FJL, LS, HL} and references therein).
	
Let $A=0$, $b=0$, $\alpha \in (0,2)$, $m>0$, and
  \begin{equation}\label{relstabnu}
    \nu(x)=\frac{c_{d,\alpha}}{|x|^{d+\alpha}}e^{-m^{1/\alpha}|x|}\varphi(m^{1/\alpha}|x|),
  \end{equation}
	where
	$$
	  \varphi(\xi)=\int_0^\infty e^{-v}v^p (\xi+v/2)^p\, dv,\quad \xi\geq 0, \ \  p=\frac{d+\alpha-1}{2},
	$$
	and $c_{d,\alpha}=\Gamma((d+\alpha)/2)/(\pi^{d/2}2^{-\alpha/2}|\Gamma(-\alpha/2)|\varphi(0)).$ We have
	\begin{displaymath}
		\nu(x) \asymp   e^{-m^{1/\alpha}|x|} |x|^{-\frac{d+\alpha+1}{2}}, \quad |x|>1,
	\end{displaymath}
	and
	\begin{displaymath}
		\nu(x) \asymp   |x|^{-d-\alpha}, \quad |x|\leq 1,
	\end{displaymath}
	and therefore it follows from Lemma \ref{see_charB} that \textbf{(B)} holds for $\nu$
	and 
	$$
	  f(s) = e^{-m^{1/\alpha}} s^{-d-\alpha}\indyk{(0,1]}(s) +  e^{-m^{1/\alpha}s} s^{-\frac{d+\alpha+1}{2}}\indyk{(1,\infty)}(s).
	$$ 
	Moreoever,
	\begin{eqnarray*}
	  \lim_{r\to\infty}\frac{\varphi(m^{1/\alpha}|r\theta-y|)}{\varphi(m^{1/\alpha}r)} 
		&  =   & \lim_{r\to\infty}\frac{\int_0^\infty e^{-v}v^p (m^{1/\alpha}|r\theta-y|+v/2)^p\, dv}{\int_0^\infty e^{-v}v^p (m^{1/\alpha}r+v/2)^p\, dv} \\
		&  =   & \lim_{r\to\infty}\frac{\int_0^\infty e^{-v}v^p (m^{1/\alpha}|\theta-y/r|+\frac{v}{2r})^p\, dv}{\int_0^\infty e^{-v}v^p (m^{1/\alpha}+\frac{v}{2r})^p\, dv} = 1,
	\end{eqnarray*}
	since 
	$$\lim_{r\to\infty} \int_0^\infty e^{-v}v^p (m^{1/\alpha}+\frac{v}{2r})^p\, dv = \lim_{r\to\infty} \int_0^\infty e^{-v}v^p (m^{1/\alpha}|\theta-y/r|+\frac{v}{2r})^p\, dv = m^{p/\alpha} \int_0^\infty e^{-v}v^p \, dv, $$
	which follows from bounded convergence. Now we easily get
	$$
	  \lim_{r\to\infty} \frac{\nu(r\theta-y)}{\nu(r\theta)} = e^{m^{1/\alpha}(\scalp{\theta}{y})},
	$$
	uniformly in $(\theta, y)$ on each rectangle $\sfera \times B(0,\rho)$, for every $\rho>0$. Thus \textbf{(C)} holds with $E = \sfera$.
	
	The underlying
	semigroup has the characteristic exponent of the form (recall that $A = 0$ and $b=0$)
	\begin{displaymath}
		\psi(\xi) = \Phi(\xi) = (m^{2/\alpha} + |\xi|^2)^{\alpha/2}-m, \quad \xi\in\Rd.
	\end{displaymath}
	Since
	$\Phi(\xi)\asymp |\xi|^2 \wedge|\xi|^{\alpha}$ and $1/\Psi_{-}(1/t)\asymp t^{1/2} \wedge t^{1/\alpha} $,
	the assumption \textbf{(D)} is also satisfied for every $T=(0,t_0)$, with $t_0>0$, and we get the following result.
	
 \begin{theorem}
   If $A=0$, $b=0$ and the L\'evy measure $\nu$ is given by \eqref{relstabnu}, 
	 then there exist densities $p_t$ and for every $\theta\in\sfera$, we have
   \begin{align}
     \lim_{r \to \infty} \frac{p_t(r\theta -y)}{t \, \nu(r\theta)} = e^{mt + m^{1/\alpha} (\scalp{\theta}{y})},
   \end{align}
   uniformly in $(t,\theta,y)$ on each set $(0,t_0) \times \sfera \times B(0,\rho)$, $t_0>0$, $\rho>0$.
 \end{theorem}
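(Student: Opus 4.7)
The plan is to derive the statement as a direct application of Theorem \ref{thm:main}(b), taken with $E = \sfera$, $\kappa = m^{1/\alpha}$ and $T = (0,t_0)$ for an arbitrary fixed $t_0 > 0$; the bulk of the work is the verification of the standing assumptions \textbf{(A)}--\textbf{(D)}, which is essentially already carried out in the paragraphs immediately preceding the theorem statement, while the only remaining computation is the identification $\widetilde{\psi}(\kappa\theta) = -m$. More explicitly, \textbf{(A)} is trivial since $A = 0$; with
$$
f(s) = e^{-m^{1/\alpha}}\,s^{-d-\alpha}\indyk{(0,1]}(s) + e^{-m^{1/\alpha}s}\,s^{-(d+\alpha+1)/2}\indyk{(1,\infty)}(s)
$$
one has $\nu \asymp f$, so \eqref{ass:low_reg} and \eqref{eq:lower} (the latter with $E = \sfera$) are immediate, and Lemma \ref{see_charB}(c) together with a matching behaviour near the origin yields \eqref{ass:sjp} and thus \textbf{(B)}; the uniform convergence of $\nu(r\theta - y)/\nu(r\theta)$ to $e^{\kappa(\scalp{\theta}{y})}$ on $\sfera \times B(0,\rho)$ is exactly the bounded-convergence computation spelled out in the text, so \textbf{(C)} holds; finally, $\Re\Phi(\xi) \asymp |\xi|^2 \wedge |\xi|^\alpha$ combined with the second sufficient condition in Remark \ref{rem:assumptions}(e) delivers \textbf{(D)} on every $(0,t_0)$.

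Once the four assumptions are in force, Theorem \ref{thm:main}(b) produces the claimed uniform convergence on each cuboid $(0,t_0)\times\sfera\times B(0,\rho)$ with limit $e^{-t\widetilde{\psi}(\kappa\theta) + \kappa(\scalp{\theta}{y})}$. To evaluate $\widetilde{\psi}(\kappa\theta)$ the plan is to use the analytic-continuation identity $\widetilde{\psi}(\xi) = \psi(-i\xi)$: comparing \eqref{eq:Phi_2} with \eqref{eq:hyp_exp}, the substitution $\xi \mapsto -i\xi$ turns one defining integrand into the other, and both sides are holomorphic in $\xi$ wherever the corresponding integrals converge. The relativistic symbol extends entirely as $\psi(\xi) = (m^{2/\alpha} + \sum_j \xi_j^2)^{\alpha/2} - m$, and since $\sum_j (-i\xi_j)^2 = -|\xi|^2$ this gives $\widetilde{\psi}(\xi) = (m^{2/\alpha} - |\xi|^2)^{\alpha/2} - m$ on the real ball $\{|\xi| \leq m^{1/\alpha}\}$. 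Evaluating at $\xi = m^{1/\alpha}\theta$ yields $\widetilde{\psi}(\kappa\theta) = -m$, whence $e^{-t\widetilde{\psi}(\kappa\theta) + \kappa(\scalp{\theta}{y})} = e^{mt + m^{1/\alpha}(\scalp{\theta}{y})}$, matching the asserted limit.

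The only substantive difficulty is the computation at the critical radius $|\xi| = m^{1/\alpha}$. On the open ball $|\xi| < m^{1/\alpha}$ the exponential moment $\int e^{\xi\cdot y}\nu(dy)$ converges with exponential margin and the identity $\widetilde{\psi}(\xi) = \psi(-i\xi)$ follows by standard holomorphic continuation in $\xi$. At $|\xi| = m^{1/\alpha}$ the exponential tail of $\nu$ is exactly balanced by $e^{\xi\cdot y}$, and integrability has to be justified by exploiting the pointwise cancellation $\kappa\theta\cdot y - m^{1/\alpha}|y| = -m^{1/\alpha}|y|\bigl(1 - \theta\cdot(y/|y|)\bigr)$; a Laplace-type estimate in spherical coordinates around the singular direction $y/|y| = \theta$ then gives finiteness for every $\alpha \in (0,2)$. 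Lemma \ref{lm:useful_2} already guarantees that $\theta \mapsto \widetilde{\psi}(\kappa\theta)$ is well defined and bounded on $\sfera$, but the explicit value $-m$ requires this boundary passage; once integrability is in hand, continuity of $\widetilde{\psi}$ on the closed ball $\{|\xi| \leq m^{1/\alpha}\}$ via dominated convergence transports the interior identity to the boundary and fixes the constant.
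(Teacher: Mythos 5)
Your verification of the standing assumptions \textbf{(A)}--\textbf{(D)} mirrors the paper's, and the reduction to computing $\widetilde{\psi}(m^{1/\alpha}\theta)$ is the same. Where you part ways is the evaluation of that constant. You compute it by analytic continuation of the symbol, i.e.\ from $\widetilde{\psi}(\xi)=\psi(-i\xi)$ and the explicit formula $\psi(\xi)=(m^{2/\alpha}+|\xi|^2)^{\alpha/2}-m$, obtaining $\widetilde{\psi}(\xi)=(m^{2/\alpha}-|\xi|^2)^{\alpha/2}-m$ on the open ball and then pushing to the boundary $|\xi|=m^{1/\alpha}$ by a continuity/dominated-convergence argument. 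The paper avoids the boundary subtlety entirely by computing the bilateral Laplace transform directly from the subordination representation
$p_t(y)=e^{mt}\int_0^\infty(4\pi s)^{-d/2}e^{-|y|^2/(4s)}e^{-m^{2/\alpha}s}\eta(t,s)\,ds$,
where $\eta(t,\cdot)$ is the density of the $\alpha/2$-stable subordinator: after carrying out the Gaussian integral one is left with $e^{-t\widetilde\psi(\xi)}=e^{mt}\int_0^\infty e^{-(m^{2/\alpha}-|\xi|^2)s}\eta(t,s)\,ds=e^{mt}e^{-t(m^{2/\alpha}-|\xi|^2)^{\alpha/2}}$ for all $|\xi|\le m^{1/\alpha}$, and at the boundary this is simply $e^{mt}\cdot 1$ because $\eta(t,\cdot)$ is a probability density. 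So the paper's route gives the boundary value for free, whereas yours needs the extra Laplace-type integrability estimate and a passage to the limit; in compensation, your approach is more generic (it does not rely on the specific subordination structure of the relativistic stable semigroup) and would transfer more readily to other exponentially localized examples where an explicit heat-kernel representation is unavailable. Both are correct; your boundary passage should cite Lemma \ref{lm:useful_2} for the boundary finiteness and then supply a uniformly integrable majorant for the dominated-convergence step, which the Laplace estimate you sketch does provide.
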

 \begin{proof}
   We have already verified the assumptions of Theorem 1 above. We need only to check that 
	$\widetilde{\psi}(m^{1/\alpha}\theta)=-m$. Using \cite[Th. 25.17]{Sato} and Lemma \ref{lm:useful_2}, we get 
	$e^{-t\widetilde{\psi}(\xi)} = \int_{\Rd} e^{\scalp{\xi}{y}} p_t(y)\, dy$, for every $t>0$ and $\xi \in \R^d$ such hat $|\xi|\leq m^{1/\alpha}$. Furthermore, we have $$p_t(y) = e^{mt} \int_0^\infty \left(\frac{1}{4\pi s}\right)^{d/2}e^{\frac{-|y|^2}{4s}}e^{-m^{2/\alpha}s}\eta(t,s)\, ds, $$ where
	$\eta(t,s)$ is the transition density of an $\alpha/2$-stable subordinator such that 
$\int_0^\infty e^{-\lambda s} \eta(t,s)\, ds = e^{-t\lambda^{\alpha/2}}$, $\lambda\geq 0$ (see e.g. \cite{R2002}).
Now we easily get
\begin{eqnarray*}
  e^{-t\widetilde{\psi}(\xi)}
	& = & e^{mt} \int_0^\infty \left(\frac{1}{4\pi s}\right)^{d/2} 
	             \left( \int_{\Rd}e^{\scalp{\xi}{y}-\frac{|y|^2}{4s}}\, dy \right) 
							 e^{-m^{2/\alpha}s}\eta(t,s) \, ds \\
	& = & e^{mt} \int_0^\infty e^{-(m^{2/\alpha}-|\xi|^2)s}\eta(t,s) \, ds \\
	& = & e^{mt} e^{-t(m^{2/\alpha}-|\xi|^2)^{\alpha/2}}, \quad |\xi|\leq m^{1/\alpha}.
\end{eqnarray*}
In particular, $e^{-t\widetilde{\psi}(m^{1/\alpha}\theta)} = e^{mt}$.
	
 \end{proof}

\subsection{Semigroups with stretched-exponentially localized L\'evy measures}  \label{sec:subexp}
Let
\begin{equation}\label{Lm:subexp}
  \nu(x) = g(x/|x|)f(|x|),
\end{equation}
where
\begin{align} \label{def:off}
  f(s) = \indyk{[0,1]}(s) \cdot \eta(s)  + c_0 \ \indyk{(1,\infty)}(s) \cdot e^{-m s^{\beta}} s^{-\delta}, \quad s \geq 0.
\end{align}
We assume here that $\beta\in (0,1)$, $\delta\geq 0$, and $\eta: [0,1] \to (0,\infty]$ is a nonincreasing function such that $c_0e^{-m}\leq \eta(1)<\infty$, $\eta(0)=\infty$, 
and there exists $c_1 \geq 1$ satisfying $\eta(r) \leq c_1 \eta(2r)$, $r \in (0,1/2)$. Also, let $g:\: \sfera\to [0,\infty)$ be a function
such that $ 0 \leq g(\theta) \leq c_2$, for all $\theta\in\sfera$ and some positive constant $c_2$
and $g$ satisfies the nondegeneracy condition \eqref{nondegenerete}.

Obviously Lemma \ref{see_charB} yields that such L\'evy measures satisfy \textbf{(B)} with given profile $f$. We have
\begin{displaymath}
	\lim_{r\to\infty}\frac{\nu(|r\theta-y|)}{\nu(r\theta)} = 1,
\end{displaymath}  
for every $\theta\in\sfera$ and $y\in\Rd$ provided $g$ is positive and 
continuous at $\theta$ and the convergence
is uniform at $(\theta,y)\in E \times B(0,\rho)$ for every $\rho>0$ provided 
$g$ is uniformly continuous 
and bounded from below by a positive constant 
on $E\subset\sfera$.

The assumption \textbf{(D)} in fact depends only on the singularity of the function $\eta$ at zero (cf. Remark \ref{rem:assumptions} (e)).
For instance, it holds with $T = (0,t_0)$, for any $t_0>0$, if only there exist constants $M_1,M_2>0$ and 
$d<\beta_1\leq \beta_2 < d+2$ such that
$M_1(R/r)^{\beta_1}\leq \eta(r)/\eta(R) \leq M_2(R/r)^{\beta_2}$, for all $1\geq R \geq r >0$ (the proof
of this fact follows easily from a slight modification of \cite[Lem. 4.5]{S16}). 

We obtain the following theorem.

\begin{theorem}
   Let $A=0$ or $\inf_{|\xi|=1} \xi \cdot A \xi > 0$, $b\in\Rd$ and let the L\'evy measure 
	 $\nu$ be given by \eqref{Lm:subexp}, with $f$ and $g$ specified above. Assume, in addition, that $\eta$ is such 
	 that \textbf{(D)} holds on some bounded set $T\subset (0,\infty)$,
	 and that $g$ satisfies \eqref{nondegenerete}. 
	 Then for every $t \in T$ there exist densities $p_t$ and for every $\theta\in\sfera$ such that $g$ 
	 is positive and continuous at $\theta$ we have
   \begin{align}
     \lim_{r \to \infty} \frac{p_t(r\theta -y)}{t \, \nu(r\theta)} = 1,
   \end{align}
   uniformly in $(t,y)$ on each rectangle $T \times B(0,\rho)$, $\rho>0$.
	If $g$ is uniformly continuous on $E \subset\sfera$ 
	and $g(\theta)\geq c_2>0$, 
	$\theta\in E$, then the convergence is uniform
	in $(t,\theta,y)$ on $T \times E \times B(0,\rho)$ for every
	$\rho>0$.
 \end{theorem}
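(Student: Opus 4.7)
The plan is to verify the four standing assumptions \textbf{(A)}--\textbf{(D)} of Theorem \ref{thm:main} with $\kappa = 0$ and then invoke that theorem; since $\kappa = 0$, the right-hand side of \eqref{eq:res_conv} collapses to $1$ and $\widetilde{\psi}$ drops out. Assumption \textbf{(A)} is part of the hypothesis and \textbf{(D)} is assumed directly, so the content is in \textbf{(B)} and \textbf{(C)}.

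For \textbf{(B)}, the pointwise bound $\nu(x) \leq c_2 f(|x|)$ is immediate from $0 \leq g \leq c_2$. The low-order regularity \eqref{ass:low_reg} follows by writing, in spherical coordinates,
\begin{displaymath}
\nu\bigl(\{|x|>r\}\bigr) \;=\; \Bigl(\int_\sfera g(\theta)\,d\theta\Bigr) \int_r^\infty f(s)\, s^{d-1}\,ds;
\end{displaymath}
the angular factor is positive by \eqref{nondegenerete}, while the assumed doubling of $\eta$ near zero gives $\int_r^{2r} f(s)\, s^{d-1}\,ds \gtrsim f(2r)\, r^d \gtrsim f(r)\, r^d/c_1$ for small $r$. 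Condition \eqref{ass:sjp} is exactly Lemma \ref{see_charB}(b): by definition $K(r)$ for $r\geq 1$ only involves values of $f$ on arguments $\geq r \geq 1$, where $f$ coincides with the stretched-exponential tail $c_0 e^{-m s^\beta} s^{-\delta}$, and the parameters $m>0$, $\beta \in (0,1)$, $\delta\geq 0$ are precisely those of case (b).

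For \textbf{(C)} with $\kappa = 0$, factor the ratio as
\begin{displaymath}
\frac{\nu(r\theta - y)}{\nu(r\theta)} \;=\; \frac{g\bigl((r\theta - y)/|r\theta - y|\bigr)}{g(\theta)} \cdot \frac{f(|r\theta - y|)}{f(r)}.
\end{displaymath}
The angular factor tends to $1$ by continuity of $g$ at $\theta$, uniformly on $E \times B(0,\varrho)$ in the second statement because $g$ is uniformly continuous on $E$ and bounded below there by $c_2$. For the radial factor, writing $|r\theta - y| = r\, \sigma_r$ with $\sigma_r = (1 - 2\scalp{\theta}{y}/r + |y|^2/r^2)^{1/2} \to 1$, one has
\begin{displaymath}
\frac{f(|r\theta - y|)}{f(r)} \;=\; \sigma_r^{-\delta} \exp\!\bigl(-m\, r^\beta(\sigma_r^\beta - 1)\bigr),
\end{displaymath}
and the Taylor expansion $\sigma_r^\beta - 1 = -\beta\scalp{\theta}{y}/r + O(1/r^2)$ gives $r^\beta(\sigma_r^\beta - 1) = O(r^{\beta - 1}) \to 0$, where the restriction $\beta < 1$ is decisive; the convergence is evidently uniform in $y$ on compact sets. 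The lower bound \eqref{eq:lower} reduces to $\inf_{\theta \in E} g(\theta) \geq c_2 > 0$ in the uniform case, and to $g(\theta) > 0$ in the pointwise case, where one takes $E = \{\theta\}$ and applies Remark \ref{rem:assumptions}(c).

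With \textbf{(A)}--\textbf{(D)} verified, Theorem \ref{thm:main} delivers the stated limits and uniformity conclusions. The main (small) obstacle is the asymptotic $r^\beta(\sigma_r^\beta - 1) \to 0$, which is what makes the stretched-exponential regime ($\beta < 1$) behave qualitatively like the long-tailed case with $\kappa = 0$; for $\beta = 1$ the very same expansion would produce a nonvanishing factor $e^{m\scalp{\theta}{y}}$ and force $\kappa = m > 0$, which is the case treated in the subsequent section on exponentially localized measures.
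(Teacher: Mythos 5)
Your proof is correct and follows exactly the route the paper takes: verify \textbf{(A)}--\textbf{(D)} with $\kappa = 0$ (where \textbf{(B)} comes from Lemma~\ref{see_charB}(b) and \eqref{ass:low_reg}, and \textbf{(C)} from the factorization of the ratio into angular and radial parts, with $r^\beta(\sigma_r^\beta-1)\to 0$ using $\beta<1$), and then invoke Theorem~\ref{thm:main} and Remark~\ref{rem:assumptions}(c). The paper states these verifications tersely ("Obviously Lemma~\ref{see_charB} yields...") whereas you spell them out, but the argument is the same.
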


\subsection{Semigroups with exponentially localized L\'evy measures}
Let
\begin{equation}\label{Lm:exp}
  \nu(x) = g(x/|x|)f(|x|),
\end{equation}
where
\begin{align} \label{def:off}
  f(s) = \indyk{[0,1]}(s) \cdot \eta(s)  + c_0 \ \indyk{(1,\infty)}(s) \cdot e^{-m s} s^{-\delta}, \quad s \geq 0.
\end{align}
We assume here that $m>0$, $\delta> \frac{d+1}{2}$ and
$\eta: [0,1] \to (0,\infty]$ and $g:\sfera \to [0,\infty)$ satisfy the same assumptions as in Section \ref{sec:subexp} above.

As before, Lemma \ref{see_charB} yields that such L\'evy measures satisfy \textbf{(B)}. We have
\begin{displaymath}
	\lim_{r\to\infty}\frac{\nu(|r\theta-y|)}{\nu(r\theta)} = e^{m\scalp{\theta}{y}},
\end{displaymath}  
for every $\theta\in\sfera$ and $y\in\Rd$ provided $g$ 
is positive and 
is continuous at $\theta$ and the convergence
is uniform at $(\theta,y)\in E \times B(0,\rho)$ for every $\rho>0$ provided 
$g$ is uniformly continuous 
and separated from zero
on $E \subset\sfera$.

We have the following theorem.

\begin{theorem}
   Let $A=0$ or $\inf_{|\xi|=1} \xi \cdot A \xi > 0$, $b\in\Rd$ and let the L\'evy measure 
	 $\nu$ be given by \eqref{Lm:exp}, with $f$ and $g$ specified above. Assume, in addition, that $\eta$ is such 
	 that \textbf{(D)} holds on some bounded set $T\subset (0,\infty)$,
	 and that $g$ satisfies \eqref{nondegenerete}.
	 Then for every $t \in T$ there exist densities $p_t$ and for every $\theta\in\sfera$ such that $g$ 
	 is positive and continuous at $\theta$ we have
   \begin{align}
     \lim_{r \to \infty} \frac{p_t(r\theta -y)}{t \, \nu(r\theta)} = e^{-t\tilde{\psi}(m\theta)+m\scalp{\theta}{y}},
   \end{align}
	 where $\widetilde \psi$ is given by \eqref{eq:hyp_exp}.
  The convergence is uniform in $(t,y)$ on each rectangle $T \times B(0,\rho)$, $\rho>0$.
	If $g$ is uniformly continuous on $E \subset\sfera$ and $g(\theta)\geq c_1>0$, 
	$\theta\in E$, then the convergence is uniform
	in $(t,\theta,y)$ on $T \times E \times B(0,\rho)$ for every
	$\rho>0$.
 \end{theorem}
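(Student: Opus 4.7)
The plan is to deduce this theorem from Theorem \ref{thm:main} applied with $\kappa = m$ and with $E$ taken as $\{\theta\}$ in the pointwise statement, respectively as the set $E \subset \sfera$ on which $g$ is uniformly continuous and bounded below in the uniform statement. Since \textbf{(A)} and \textbf{(D)} are hypothesised outright, the task reduces to checking \textbf{(B)} and \textbf{(C)} for the density $\nu(x) = g(x/|x|) f(|x|)$ with $f$ as in \eqref{def:off}.

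For \textbf{(B)}: the profile $f$ is strictly positive and non-increasing on $(0,\infty)$ (the assumption $\eta(1) \geq c_0 e^{-m}$ prevents an upward jump at $s=1$), and the pointwise bound $\nu(x) \leq c_2 f(|x|)$ is immediate from $g \leq c_2$. The tail condition \eqref{ass:low_reg} follows by integration in polar coordinates, combining the blow-up $\eta(0)=\infty$, the doubling of $\eta$ near zero and the nondegeneracy \eqref{nondegenerete} of $g$. As noted just before Lemma \ref{lm:useful}, the function $K$ depends only on the tail of $f$; hence the critical condition \eqref{ass:sjp} is precisely case (c) of Lemma \ref{see_charB}, which applies here with $\beta=1$ and $\delta > (d+1)/2$.

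For \textbf{(C)}: I would factorise, for $r$ large,
\[
  \frac{\nu(r\theta - y)}{\nu(r\theta)} \;=\; \frac{g\bigl((r\theta - y)/|r\theta-y|\bigr)}{g(\theta)} \cdot \frac{f(|r\theta-y|)}{f(r)}.
\]
The angular factor tends to $1$ by continuity of $g$ at $\theta$, and does so uniformly in $(\theta,y) \in E \times B(0,\rho)$ whenever $g$ is uniformly continuous on $E$ and bounded below there by $c_1 > 0$. Using the expansion $|r\theta - y| = r - \scalp{\theta}{y} + O(1/r)$, uniform in $(\theta,y) \in \sfera \times B(0,\rho)$, the radial factor equals $e^{m(r - |r\theta-y|)}\bigl(r/|r\theta-y|\bigr)^{\delta}$, which converges uniformly to $e^{m \scalp{\theta}{y}}$. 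This gives \eqref{eq:add_conv} with $\kappa = m$. The lower bound \eqref{eq:lower} is satisfied with $C_1 = g(\theta) > 0$ for the singleton $E = \{\theta\}$, and with $C_1 = \inf_{\theta \in E} g(\theta) \geq c_1 > 0$ in the uniform setting.

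With \textbf{(A)}--\textbf{(D)} verified, Theorem \ref{thm:main} yields the existence of densities $p_t$ for $t\in T$ together with the claimed limit $e^{-t\widetilde{\psi}(m\theta) + m\scalp{\theta}{y}}$, and assertion (b) of Theorem \ref{thm:main} provides the uniformity on $T \times E \times B(0,\rho)$. The only genuinely delicate point in this verification is the condition \eqref{ass:sjp} at the critical exponential rate $\kappa = m$: this is where the strict inequality $\delta > (d+1)/2$ is indispensable, since otherwise $K(r) \to 0$ fails and the double-convolution control underlying Theorem \ref{thm:main} breaks down. All remaining verifications are essentially bookkeeping around the splitting of $f$ at $s=1$ and the uniform smoothness of $r \mapsto |r\theta - y|$ for bounded $y$.
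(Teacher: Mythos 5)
Your proposal is correct and follows essentially the same route as the paper: verify assumption \textbf{(B)} via Lemma \ref{see_charB} (case (c), $\beta=1$, $\delta>(d+1)/2$), verify \textbf{(C)} with $\kappa=m$ by the angular/radial factorisation of $\nu(r\theta-y)/\nu(r\theta)$ using continuity of $g$ and the expansion $|r\theta-y|=r-\scalp{\theta}{y}+O(1/r)$, and then invoke Theorem \ref{thm:main} (part (b) for the uniformity claims, including $E=\{\theta\}$ for the pointwise-in-$\theta$ case, as noted in Remark \ref{rem:assumptions}(c)). The paper performs exactly these verifications, albeit more tersely, so there is no substantive difference in approach.
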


We end this section by discussing the following example of semigroup with isotropic exponentially 
localized L\'evy measure. It shows that the condition \eqref{eq:add_conv} of \textbf{(C)} usually does not imply \eqref{ass:sjp} in \textbf{(B)} and the existence of the exponential moments of $p_t$. In particular, even if \eqref{eq:add_conv} is true uniformly, the convergence in Theorem \ref{thm:main} may not hold without control given by \eqref{ass:sjp}.

\begin{example} \label{ex:ex2}
{\rm
Let $b \equiv 0$, $A \equiv 0$, and let $\nu$ be an isotropic L\'evy measure as in \eqref{Lm:exp}-\eqref{def:off} with $m>0$, $\delta \in  (0,\frac{d+1}{2}]$, and $g \equiv c$, for some $c >0$ \big(i.e. $\nu(x) = c f(|x|)$, $x \in \R^d \setminus \left\{0\right\}$\big). Also, let $\eta$ be such that $\int e^{-t_0 \Phi(\xi)} |\xi| d\xi < \infty$ for some $t_0>0$ \big(e.g. $\eta(s) \asymp s^{-d-\alpha}$, $s \in (0,1)$, with some $\alpha \in [0,2)$\big). By Lemma \ref{see_charB}, we have $K(r) = \infty$, for $r \geq 1$, i.e. the condition \eqref{ass:sjp} in \textbf{(B)} fails to hold. On the other hand, we can easily verify that for every compact set $D \subset \R^d$
\begin{displaymath}
	\lim_{r\to\infty}\frac{\nu(r\theta-y)}{\nu(r\theta)} = \lim_{r\to\infty}\frac{f(|r\theta-y|)}{f(r)} = e^{m(\scalp{\theta}{y})},
\end{displaymath}  
uniformly in $(\theta,y) \in \sfera \times D$. Observe, however, that $p_t$ has not exponential moments of order $m \theta$ finite:
$$
\int_{|y|>1} e^{m(\scalp{\theta}{y})} \nu(y) dy = c \int_{|y|>1} e^{-m(|y|-y_1)} |y|^{-\delta} dy = \infty,
$$
since $\delta \in  (0,\frac{d+1}{2}]$. The asymptotic property as in Theorem \ref{thm:main} also does not hold. Indeed, if we suppose that there are constants $0< c_1, c_2 < \infty$ and $t_0>0$ such that $p_{t_0}(r)/f(r) \to c_1$ and $p_{2t_0}(r)/f(r) \to c_2$ as $r \to \infty$, then there is $R>0$ such that $p_{t_0}(r) \asymp p_{2t_0}(r) \asymp f(r)$, for $r \geq R$. But this would imply $K(1)< \infty$ (cf. the proof of Proposition \ref{prop:converse} (c)), which gives a contradiction. 

This example covers many interesting processes including gamma-variance (geometric $2$-stable) process or some isotropic Lamperti transformations of stable processes.  
}
\end{example}

\bigskip

\textbf{Acknowledgements.} We thank Mateusz Kwa\'snicki for valuable discussions on the problem investigated in this paper. We also thank the Alexander von Humboldt Foundation for founding our research stays at Institut f\"ur Mathematische Stochastik, TU Dresden, Germany, during which parts of this paper have been written. We are very grateful to our host, Prof. Ren\'e L. Schilling, for his kind hospitality and numerous discussions during these stays.

\end{document}